\let\mathcal\mathscr
\numberwithin{equation}{section}
\newtheorem{theorem}{Theorem}[section] 
\newtheorem{lemma}[theorem]{Lemma}
\newtheorem{proposition}[theorem]{Proposition}
\theoremstyle{definition}
 \newtheorem*{acknowledgements}{Acknowledgements}
\newtheorem{remark}[theorem]{Remark}
\newtheorem{definition}[theorem]{Definition}
 \newtheorem*{notation}{Notation}
\renewcommand{\phi}{\varphi}
\renewcommand{\leq}{\leqslant}
\renewcommand{\geq}{\geqslant}
\renewcommand{\c}{\mathbf{c}}
\renewcommand{\b}{\mathbf{b}}
\renewcommand{\r}{\mathbf{r}}
\renewcommand{\l}{\left}
\DeclareMathOperator{\Br}{Br}
\DeclareMathOperator{\Brsub}{Br_\textup{sub}}
\DeclareMathOperator{\Brvert}{Br_\textup{vert}}
\DeclareMathOperator{\rank}{rank}
\DeclareMathOperator{\Pic}{Pic}
\let\emptyset\varnothing
\newcommand{\md}[1]{  \left(\textnormal{mod}\ #1\right)}
\renewcommand{\P}{\mathbb{P}}
\newcommand{\Q}{\mathbb{Q}}
\newcommand{\F}{\mathbb{F}}
\newcommand{\N}{\mathbb{N}}
\newcommand{\R}{\mathbb{R}}
\newcommand{\Z}{\mathbb{Z}}
\renewcommand{\r}{\right}
\renewcommand{\b}{\mathbf}
\renewcommand{\c}{\mathcal}
\renewcommand{\epsilon}{\varepsilon}
\renewcommand{\leq}{\leqslant}
\renewcommand{\geq}{\geqslant}
\renewcommand{\#}{\sharp}
\newcommand{\beq}[2]
{
\begin{equation}
\label{#1}
{#2}
\end{equation}
}
\title 
[The number of soluble fibres 
on generalised Ch\^atelet varieties]
{Local solubility in  
generalised Ch\^atelet varieties}
\author{Kevin Destagnol}  
\address{Laboratoire de mathématiques d'Orsay \\ Université Paris Saclay \\   Orsay  \\  France}  
\email{kevin.destagnol@universite-paris-saclay.fr}
 \author{Julian Lyczak}  
\address{Department of Mathematics \\ 
 Universiteit Antwerpen
\\      Antwerpen  \\  Belgium}  
\email{julian.lyczak@uantwerpen.be}
\author{Efthymios Sofos}  
\address{Mathematics Department    \\ Glasgow University
\\   Glasgow   \\ G12 8QQ  \\  UK}  
\email{efthymios.sofos@glasgow.ac.uk}
\subjclass[2020] {14G05, 
11N37, 
11P55. 
 }
\begin{document}

\vspace{-4cm}

\begin{abstract}We develop a version of the Hardy--Littlewood circle 
method to obtain asymptotic formulas for averages of general
multivariate  arithmetic functions evaluated at polynomial arguments 
in several variables. As an application, we count the number of fibers 
with a rational point in families of high-dimensional Ch\^atelet 
varieties, allowing for arbitrarily large subordinate Brauer groups.
\end{abstract}  
\vspace{-4cm}
\maketitle
 
\vspace{-0,4cm}

\setcounter{tocdepth}{1}

\tableofcontents

\section{Introduction} \label{s:intro}
For many types of counting problems in arithmetic geometry there are 
conjectures for the order of growth and the leading constant, 
see for instance~\cite{LRS,LoSa,MR4307130,peyre,MR1679841}.  A common 
part of the leading constant is
the volume of an adelic subset, which is described by elements of some 
Brauer group. In the case of unramified Brauer elements, the adelic 
subset is given by finitely many congruence conditions as the invariant maps are constant at all but finitely many places. 
Harari~\cite[Th\'eor\`eme~2.1.1]{MR1284820} proved 
 that, for ramified elements, the invariant maps are
 non-constant at a set of primes of positive density. 
 We will describe a counting problem where the adelic volume will involve these infinitely many congruence conditions.

\subsection*{Counting result}
We consider $R$ forms \( f_1, \dots, f_R \in \mathbb{Z}[x_0, \dots, x_n] \) of the same degree \( d \), generic as in Definition~\ref{def:birch},
with 
  \( dR\equiv 0 \md 2  \). For \( B \geq 1 \) and a square-free integer \( D \neq 1   \)
let 
\beq{def:countingfunction}{
N(B) := \#
\l\{\b t \in \P^n(\Z)\colon H(\b t) \leq B, \ \prod_{i=1}^R
f_i(\b t) 
\textrm{ is a norm from } \mathbb Q(\sqrt D)/\Q\r\}
,}  where $H$ is the  the anticanonical height function given by  $H(\b t):=\max\{|t_0|,|t_1|,\ldots, |t_n|\}^{n+1}$.
\begin{theorem}
\label{thm:mainthrm}
Fix any $A>0$. 
Then for $B\geq 3$ we have 
$$N( B)=\frac{\gamma}{2} \frac{(n+1)^{R/2}}{\zeta(n+1)}
 \l(\frac{2 }{\sqrt {\pi d} }\r)^R
 \frac{B }{(\log B)^{R/2}}
+O\l(\frac{B}{(\log B)^{R/2}}
\frac{1}{(\log \log \log B)^{A}}
\r),$$ 
 where $\zeta$ is the Riemann zeta function 
 and $\gamma$ is defined by 
$$ \gamma:= 
\sum_{\substack{ (s_1,\ldots, s_R) \in \{-1,1\}^R\\
D<0\Rightarrow s_1\cdots s_R >0 }} 
\gamma(\b s )  \cdot 
\mathrm{vol}
 \l\{\b t \in [-1,1]^{n+1}:   \mathrm{sign}(f_j(\b t ))=s_j \,\, \forall j\r\} 
.$$ To describe $\gamma(\b s )  $
we let  $\mu_p$ be the $p$-adic Haar measure on $\Z_p^{n+1}$
for a prime $p$
and   let $\prod_{p\leq T} \mu_p$
be the product measure on  
$\prod_{p\leq T} \Z_p^{n+1}$. Then   
$$
\gamma(\b s ):= \lim_{T \to \infty}  
\l[
\prod_{\substack{p \ \mathrm{ prime} \\ p \leq T} }
\left(1-\frac1p\right)^{-\frac R2} 
\mu_p
\r]
\hspace{-0.1cm}
\l \{\b t \in  \l ( 
\prod_{p \leq T} \Z_p\r )^{n+1}\hspace{-0.3cm} \colon \ 
\hspace{-0.2cm}
\begin{array}{l} (D,  f_1(\b t ) \cdots f_R(\b t ) )_{\Q_p}=1 \ 
\forall p \leq T,  \\ \left(\frac D{s_if_i(\b t)/\prod_{p \leq T} 
p^{v_p(f_i(\b t))}}\right) = 1 \  \forall i\end{array}
\hspace{-0.1cm}
\right\}
,$$  where 
$(\cdot,\cdot)_{\Q_p}$ is the Hilbert symbol 
 in $\Q_p$ 
and $(\frac{D}{\cdot})$ is the   Kronecker symbol.
\end{theorem}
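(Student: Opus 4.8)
The plan is to translate the norm condition into $p$-adic and archimedean congruence conditions via the Hasse norm theorem, peel off the real place by a sign decomposition, and then feed the resulting multiplicative weight into the general Hardy--Littlewood asymptotic for averages of multivariate arithmetic functions at polynomial arguments developed in the body of the paper.

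\textbf{Reduction to a weighted lattice sum.} Every $\b t\in\P^n(\Z)$ with $H(\b t)\le B$ has exactly two primitive representatives $\pm\b t\in\Z^{n+1}$ in the box $|\b t|_\infty\le X:=B^{1/(n+1)}$, so $N(B)=\tfrac12\sum_{\b t}\varpi(\b t)$ over primitive $\b t$ in this box, where $\varpi(\b t)=1$ iff $\prod_i f_i(\b t)$ is a norm from $\Q(\sqrt D)/\Q$. Since $dR$ is even, $\lambda^{dR}$ is always a square, so $\varpi$ is invariant under scaling of $\b t$; this both makes $\varpi$ well defined on $\P^n$ and lets us run M\"obius inversion over the gcd of the coordinates, extracting the factor $1/\zeta(n+1)$ with the same main term. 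By the Hasse norm theorem for the quadratic (hence cyclic) extension $\Q(\sqrt D)/\Q$ one has $\varpi(\b t)=\prod_v\delta_v(\b t)$, a product over all places, with $\delta_v(\b t)=1$ iff $(D,\prod_i f_i(\b t))_{\Q_v}=1$. The factor $\delta_\infty$ depends only on the sign vector $\b s=(\mathrm{sign}(f_j(\b t)))_j$: it equals $1$ unless $D<0$ and $s_1\cdots s_R<0$, which is exactly the constraint in the definition of $\gamma$. Splitting $\Z^{n+1}$ into the finitely many sign sectors, we are left, for each admissible $\b s$, with the sum of $\prod_p\delta_p(\b t)$ over $\b t$ in the $\b s$-sector of the box, where $\delta_p(\b t)$ depends only on $v_p(\prod_i f_i(\b t))$ and on $\prod_i f_i(\b t)$ modulo a bounded power of $p$.

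\textbf{The weight and the circle method.} Up to the sign data, $\b t\mapsto\prod_p\delta_p(\b t)$ is a multiplicative function of $m:=\prod_i f_i(\b t)$, namely the indicator that $m$ is everywhere locally a norm: every power of a split prime is allowed, only even powers of an inert prime, with finitely many ramified exceptions. Genericity of the $f_i$ (Definition~\ref{def:birch}) forces $f_1(\b t),\dots,f_R(\b t)$ to be pairwise coprime up to a bounded factor, so at all but boundedly many primes this indicator splits as a product of $R$ copies of the one-variable ``$n$ is a norm'' indicator, each of which has mean value of order $(\log)^{-1/2}$; this is what produces the $(\log B)^{-R/2}$. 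After an approximate Dirichlet-convolution decomposition of the norm indicator (e.g. as the indicator of squares convolved with a well-controlled function, equivalently factoring its Dirichlet series through $\zeta^{1/2}L(\cdot,\chi_D)^{1/2}$), one applies the paper's general circle-method asymptotic to the average of the resulting multivariate weight at the polynomial points $(f_1(\b t),\dots,f_R(\b t))$. Genericity supplies the Birch-type minor-arc bounds; the intermediate prime factors of $m$ are treated by a Selberg--Delange analysis of the weight, and the large prime factors by the level-of-distribution estimate built into the major-arc analysis.

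\textbf{Assembling the constant.} In the $\b s$-sector the asymptotic reads $\gamma(\b s)\cdot\mathrm{vol}\{\b t\in[-1,1]^{n+1}:\mathrm{sign}(f_j(\b t))=s_j\ \forall j\}\cdot c_d^R\cdot X^{n+1}/(\log X)^{R/2}$ for a universal $c_d$. The singular-series factor is precisely the product over $p\le T$ of the $p$-adic densities of the Hilbert- and Kronecker-symbol conditions, renormalised by the convergence factors $(1-1/p)^{-R/2}$: because $D$ is ramified at a positive density of primes the bare product diverges, and it is exactly this renormalisation, followed by $T\to\infty$, that yields $\gamma(\b s)$. The singular integral is the stated real volume, the archimedean density being already the indicator of the sign sector. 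Writing $\log X=\tfrac1{n+1}\log B$ supplies the $(n+1)^{R/2}$, and a Selberg--Delange computation identifies $c_d=2/\sqrt{\pi d}$ --- the $\pi^{-1/2}$ from the $\Gamma(1/2)$ in the $(\log)^{-1/2}$ mean value, the $d^{-1/2}$ from $\log|f_i(\b t)|\sim d\log X$, and the numerical factor from the residue of the governing $\zeta$/$L$-product. Summing over the admissible sign sectors and reinstating $\tfrac1{2\zeta(n+1)}$ gives the claimed leading term.

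\textbf{Main obstacle.} The crux is making the circle-method asymptotic for the norm weight effective and uniform enough in the truncation parameter $T$: the norm indicator is not a short convolution, so one must interleave a Selberg--Delange treatment of the intermediate primes with the Hardy--Littlewood treatment of $\prod_i f_i(\b t)$ --- in effect a level-of-distribution result for the forms $f_i$ in arithmetic progressions with moduli up to a power of $X$, which is where Definition~\ref{def:birch} is essential --- and control the total error uniformly in $T$. Balancing the truncation against the resulting minor-arc and tail losses is what forces the $(\log\log\log B)^{-A}$ in the error term. A secondary, bookkeeping, difficulty is matching the limit of renormalised $p$-adic densities to the precise Hilbert-symbol/Kronecker-symbol form of $\gamma(\b s)$ and pinning down $(2/\sqrt{\pi d})^R$.
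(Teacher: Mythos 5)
Your high-level roadmap --- Hasse norm theorem to turn the norm condition into Hilbert-symbol conditions at all places, a sign decomposition to peel off the real place, a Selberg--Delange-style treatment of the ``$m$ is a norm'' weight, and an application of the paper's multivariable circle-method tool --- is the same as the paper's, and the remarks about where $\gamma(\b s)$, $(n+1)^{R/2}$, and $\pi^{-R/2}$ come from are accurate. However there is one genuine gap in the argument as you state it, and one secondary misconception.

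The gap is the sentence \emph{``Genericity of the $f_i$ (Definition~\ref{def:birch}) forces $f_1(\b t),\dots,f_R(\b t)$ to be pairwise coprime up to a bounded factor.''} This is false as a pointwise statement, and the Birch condition does not give it: Definition~\ref{def:birch} controls the codimension of a certain rank locus (and hence the minor-arc decay), but for an individual $\b t$ there is no reason why $f_i(\b t)$ and $f_j(\b t)$ cannot share a large prime. What is true --- and what must be \emph{proved} --- is the statistical statement that the set of $\b t$ for which some $p>z$ divides both $f_i(\b t)$ and $f_j(\b t)$ contributes negligibly, \emph{even after restricting to the zero-density set cut out by the norm condition}. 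This is exactly the first half of what the introduction calls the splitting trick: it is established in Lemmas~\ref{lem:codimY}--\ref{lem:codijhsdcontentomY}, which combine a circle-method upper bound (Lemma~\ref{corl:uperbnd}, used to transport the problem from $\b t \in \Z^{n+1}$ to tuples $\b m \in \Z^R$) with the large-sieve estimates of Lemmas~\ref{lem:shiuconsapplicsaser}--\ref{lem:shiucuwer6}. Only after this reduction does the norm indicator split into a product of $R$ one-variable indicators, which is then fed into the equidistribution result (Lemma~\ref{lem:adrianosyria}) and the circle-method tool. Treating the near-coprimality as automatic skips the central lemma of \S\ref{s:proofproofproof}, and an argument that relied on it without proof would not go through.

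The secondary issue is your invocation of \emph{``a level-of-distribution result for the forms $f_i$ in arithmetic progressions with moduli up to a power of $X$.''} No such strong level of distribution is used, nor is it available. Theorem~\ref{lem:vachms} is a $W$-trick statement: it requires equidistribution of the arithmetic function only modulo a single highly square-full integer $W_z$, and the application in \S\ref{s:activation} has to take $W_z(\log z)^{R/2}\le (\log P)^{1/4}$, which forces $z\asymp\sqrt{\log\log P}$. It is precisely this very restrictive range --- not a balance between a Selberg--Delange tail and a minor-arc loss --- that produces the triple-logarithmic saving $(\log\log\log B)^{-A}$ in the error term. Finally, a small bookkeeping point: the convergence of the renormalised Euler-type product defining $\gamma(\b s)$ is itself not immediate (the product is only conditionally convergent) and occupies \S\ref{s:prfoz} via Lemma~\ref{lem:ozzy}; your proposal takes it for granted.
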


\begin{remark}[Leading constant]
We will see in~\eqref{eq:eulerproducts!}-\eqref
{eq:you are revolting, trully disgusting}
that $\gamma $ 
is a linear combination of $4^R$  Euler products and  
in Proposition~\ref{prop:comparing constants}
we shall prove that 
the leading constant  agrees 
with~\cite[Conjecture 3.8]{LRS}.
The constant $\gamma$ is not forced to be 
strictly positive, for example, by Definition~\ref{def:birch}.
\end{remark} \begin{remark}[Brauer group]
The varieties appearing in Theorem~\ref{thm:mainthrm} may possess 
subordinate Brauer groups of arbitrarily large size. 
This will be verified in 
Proposition~\ref{prop:subordinate brauer group}. 
\end{remark}  \begin{remark}[Reciprocity at infinitely many primes]
\label{rem:infreciproc}
The Kronecker symbol condition
 in the definition of $\gamma(\b s )$    gives a constraint at all 
 the  primes $p$ which are inert in $\Q(\sqrt D)$. 
  This behavior   is not typical    in the Tamagawa number 
in  Peyre’s constant for  Manin’s conjecture~\cite{peyre}, as 
that usually involves unramified Brauer elements. 
In the context of counting locally soluble varieties 
Loughran was the first to study this phenomenon;
he introduced and fully developed the theory of the 
subordinate Brauer groups in~\cite{MR3852186}; in fact, 
~\cite[Theorem 2.13]{MR3852186} is a version of Harari's work
~\cite[Th\'eor\`eme~2.1.1]{MR1284820}
for the subordinate Brauer group. The present paper is inspired by~\cite{MR3852186} and 
proves an asymptotic for an  explicit variety with infinitely many 
reciprocity conditions.\end{remark}The analytic core of the proof of Theorem~\ref{thm:mainthrm} lies in the development of a new  
counting tool, which we introduce in the next subsection.
\subsection{Averages of arithmetic functions over values of multivariable polynomials}

The analytic argument relies on developing a general tool 
that gives  asymptotics 
for $$  \sum_{\b t \in \Z^{n+1}\cap [-P,P]^{n+1} } 
k(f_1(\b t ), \ldots, f_R(\b t )  ) ,$$ 
where $f_1,\dots,f_R$ are generic polynomials as in 
Definition~\ref{def:birch},
provided only that 
$k:\N^R\to \mathbb C$
has an average
over arithmetic progressions of 
small modulus. The statement is in
Theorem~\ref{lem:vachms}
and its proof occupies~\S\ref{s:circlemethdtul}.
It generalises recent work of the first and third authors~\cite{destagnolR=1}
in the case $R=1$. 
In Remark~\ref{paco de  lucia}
we provide some examples of applications of Theorem~\ref{lem:vachms} and we 
prove new cases of 
 the  Chowla
 and  Bateman--Horn conjectures.

In the proof of Theorem~\ref{lem:vachms}, we opted for a ``multiplicative model" approach rather than the 
more standard method based on exponential sums, as exemplified by $\mathfrak{S}^\flat$ in Lemma~\ref{lem:cramer}. This choice ensures that the leading constant in Theorem~\ref{lem:vachms} is particularly convenient for the counting applications mentioned above. 
Notably, in the proof of Theorem~\ref{thm:mainthrm}, this constant will   take the form of a   sum of $4^R$
Euler products. It is worth emphasizing that in applications of the circle method, it is 
somewhat uncommon for the leading constant to be
a sum of more than a single 
Euler product of \( p \)-adic densities but the formulation of Theorem~\ref{lem:vachms} 
covers such cases as well.

 \subsection{Proving Theorem~\ref{thm:mainthrm} via a Splitting Trick}  

We derive Theorem~\ref{thm:mainthrm} from Theorem~\ref{lem:vachms} in \S\ref{s:proofproofproof}. The core idea is a \textit{splitting trick}, which proceeds in two steps.  

In the first step, we show that for almost all~$\b{t}$ in~$N(B)$, any two of the values \( f_i(\b{t}) \) 
only
share  prime factors of nearly bounded size. This step is motivated by the codimension~$2$ Ekedahl sieve philosophy. However, unlike traditional applications of the geometric sieve, our setting involves a zero-density subset of~$\mathbb{P}^n(\mathbb{Z})$, dictated by the norm condition in~$\mathcal{N}_D$. This approach is developed in Lemmas~\ref{lem:codimY}--\ref{lem:codijhsdcontentomY}, where we apply a general circle method upper bound (Lemma~\ref{corl:uperbnd}) to reduce the problem to averages over~$\mathbb{Z}^R$. These averages are then analyzed in Lemmas~\ref{lem:shiuconseqnjdf8d8}-\ref{lem:shiucuwer6} using the large sieve.  

This first step ensures that we need only feed into the circle
method 
an equidistribution result for tuples \((n_1, \dots, n_R)
\in \mathbb{Z}^R\), where 
\(n_1\cdots n_R\) is a norm in
$\mathbb{Q}(\sqrt{D})/\mathbb{Q}$ and, crucially, 
the~\(n_i\) are nearly coprime in pairs.
Without the near coprimality, a Perron formula argument in~$\mathbb{C}^R$ would be required. 
The equidistribution result forms the second step of the splitting trick and is stated in Lemma~\ref{lem:adrianosyria}, with its proof occupying \S\ref{RV108}.  

Finally, in \S\ref{s:Julian the Apostate}, we demonstrate that the factor~\(2^R\gamma\) coincides with the product of the order of the subordinate Brauer group and the Tamagawa number, as predicted by~\cite[Conjecture 3.8]{LRS}.  

 \subsection{Previous results}  \label{s:bibliogggg}
Asymptotic estimates for~\eqref{def:countingfunction} in the case of small \( n \) have been previously studied. When \( n = 1 \), the more general conic bundle equation  
\[
\sum_{1\leq i<j\leq 3} f_{ij}(\mathbf{t}) x_i x_j=0
\]
was analyzed in~\cite{MR3534973}, where matching upper and lower bounds were established under the condition that the discriminant of the conic bundle has total degree at most 3. Additionally, Friedlander and Iwaniec~\cite{MR476673} proved matching upper and lower bounds for the density of integers \( t \) for which an irreducible quadratic polynomial \( f(t) \) represents a norm in a quadratic field. A similar problem for an irreducible cubic polynomial \( f \) was investigated by Iwaniec and Munshi~\cite{MR2580453}.  
More generally, no asymptotic results for~\eqref{def:countingfunction} are known 
in the literature when \( n = 1 \) and \( \deg(\prod_i f_i) \geq 2 \).
A very special case of the broader work of 
Loughran--Matthiesen~\cite[Theorem 1.1, Example 1.3]{MR4780494}
handles succesfully all $f_i$ that factor  as a product of linear 
polynomials over $\Q$.  

\begin{acknowledgements} 
Parts of this investigation
took
place when JL and ES
 visited the university Paris-Saclay during February 2025, the
generous hospitality and support of which is greatly appreciated. 
We also thank 
the Max Planck Institute 
for Mathematics in Bonn, 
where this collaboration 
started in 2018. 
ES
gratefully acknowledges
Régis de la Bretèche
for financial support that made 
it possible to visit Paris and collaborate on this work.
We are grateful to Daniel Loughran for his helpful comments concerning Remark~\ref{rem:infreciproc} and \S\ref{s:bibliogggg}. We also thank Jean-Louis Colliot-Thélène for numerous valuable suggestions on the final draft of this paper.
\end{acknowledgements}

\begin{notation}
Throughout the paper $D$ will denote 
a square-free integer different from $1$.
Further, 
\beq{def:norms}{
\c N_D:=
\l\{ m \in \Z: \exists \alpha \in \mathbb Q(\sqrt D)
\textrm{ such that } 
m=\textrm{N}_{\mathbb Q(\sqrt D)/\mathbb Q}(\alpha)
\r\} .} The notation $f_i$ will always refer 
to $R$ integer forms  
in $n+1$ variables  and of 
the same degree~$d$.
They will     satisfy 
Definition~\ref{def:birch}.
For $v\in \Omega_{\mathbb{Q}}$
and $a,b \in \Q_v$
 we denote the Hilbert symbol  
of $x_0^2-a x_1^2=b x_2^2$ over $\Q_v$ 
by  $(a,b)_{\mathbb{Q}_v}$.  \end{notation}

\section{The circle method tool}
\label{s:circlemethdtul}

In \S\ref{s:valueset} we state the main theorem of this section, namely Theorem~\ref{lem:vachms}.
It regards averages of arithmetic functions in several 
variables over values of   polynomials.
We  provide some   applications other than 
Theorem~\ref{thm:mainthrm}
and remarks on how to use it.
In~\S\ref{s:pprrooff} we  an
prove upper bound 
for these averages under 
no assumptions on the arithmetic function.
Finally, in~\S\ref{s:mates 1000pounds}
we prove  Theorem~\ref{lem:vachms}.

\subsection{Averages of arithmetic functions over values of multivariable polynomials}   \label{s:valueset}  
 Theorem~\ref{lem:vachms},
   states  that if 
   $k:\N^R\to \mathbb C$ is 
   equidistributed in progressions
    then for a general 
polynomial system  $(f_1,\ldots, f_R)
\in \Z[x_0,\ldots,x_n]^R$ with large $n$,
 the sum  $$  \sum_{\b t \in \Z^{n+1}\cap [-P,P]^{n+1}} 
k(f_1(\b t ), \ldots, f_R(\b t )  ) $$  
can be estimated asymptotically. It is important
to note that $k$ need not be 
non-negative  nor multiplicative.

To set up the notation we first begin 
by making precise the assumption on equidistribution.
Let~$R\in \N$, 
$\omega:[1,\infty)^R\to \mathbb C$ be of class $\c C^1$  and for any  $q\in \N$  and $\b a\in (\Z/q\Z)^R$ let  $\rho(\b a,q)$ be any   number in $ \mathbb C$. Assume we are given any function  $k:\N^R\to \mathbb C$  and define 
\begin{equation}
\label{eq:basicproperty}
E_{k,\omega,\rho}(x;q):= 
\sup_{x_1,\ldots,  x_R \in \R\cap  [1,  x]}
\max_{\substack{ \b a\in (\Z/q\Z)^R  \\ \gcd(\b a , q )=1 }}
\left| \sum_{\substack{ \b m  \in \prod_{i=1}^R (\N\cap [1,x_i])   
\\ \b m \equiv \b a \md q  }} 
k(\b m) -\rho(\b a,q)   \int_{   \prod_{i=1}^R [1,x_i ]  }  \omega(\b t) \mathrm d \b t  \right| .
\end{equation}
We shall   denote $E_{k,\omega,\rho}(x;q)$ 
by $E(x;q)$ to simplify   notation. 
We say that   $k$ is equidistributed in progressions modulo $q$ 
when 
$E(x;q)$   grows slower than the main term
$\int_{[1,x]^R} \omega(\b t ) \mathrm d \b t $.

We will only need to know equidistribution for progressions modulo
 a single integer $W_z$ that we now proceed to define.
\begin{definition}\label{def:wzez} For any $z\geq 2$ assume we have a   function  
$m_p(z):\{\textrm{primes}\}\to \Z\cap [1,\infty)$ and let  $$W_z:=\prod_{p\leq z} p^{m_p(z)} .$$ 
We shall assume that  $ m_p(z)$ is suitably large so that $\widetilde{\epsilon}(z)\to 0 $ as $z\to \infty$, where
  $$\widetilde{\epsilon}(z):=\sum_{p\leq z} \frac{1}{p^{1+m_p(z)}} .$$ \end{definition}  The number   $\widetilde{\epsilon}(z)$ is an upper bound for the probability that a random integer is divisible by 
high powers of small primes. For the remaining of the paper we   abbreviate $m_p(z)$ by $m_p$. For certain functions $k$
it is much more straightforward
to estimate    $E(x;W_z)$ for
square-full $W_z$, 
which explains why we allowed complete freedom in choosing 
the exponents $m_p(z)$. In Remark~\ref{rem:billakos evans}
we will   prove that 
$\widetilde{\epsilon}(z)\to 0 $
if $\min_{p\leq z} m_p(z) \to \infty$.

We next describe the polynomials $f_i$ to which our 
result applies.
\begin{definition}\label{def:birch}
Assume that $f_1,\ldots, f_R \in \Z[x_0,\ldots, x_n]$ are 
forms of the same degree $d$ and 
  that  $ 
 \mathfrak B>R(R+1)2^{d-1}(d-1)$,
 where $\mathfrak B$ 
is     the codimension in $\mathbb C^{n+1}$ 
of the affine variety $$\mathrm{rank}\bigg(\bigg(\frac{\partial f_i(\b x )}{\partial x_j}\bigg)_{
\substack{ 1\leq i \leq R\\
0\leq j \leq n+1 }}\bigg)<R.$$ \end{definition}
This condition is  from
the work of Birch~\cite{MR0150129}, 
where he used the circle method 
to prove the Hasse principle for the system 
$f_1=\cdots=f_R=0$.  
Define the constant 
\beq{eq:blindleadingtheblind}{c'=c'(\b f)=\frac{1}{2} \l(\frac{    \mathfrak B 2^{-d+1}}{R(d-1)}-R-1\r)  } 
 and note that $c'>0$ by assumption.

 Our averages will be taken over certain 
 expanding 
 boxes on $\R^{n+1}$
 that we now describe.
 \begin{definition}\label{def:box}
Let 
$\c B =\prod_{j=1}^{n+1} [a_j,a'_j] \subset  [-1,1]^{n+1}$
for some $a_j$ with  $\max_i |a'_j-a_j|\leq 1 $.
We let $ b_i=2 \max_i |f_i(\c B)| $ and  
$ b=\max b_i$.
\end{definition}

\begin{theorem}\label{lem:vachms} Fix any $d,R,n \in \N$ and assume that 
 $f_1,\ldots,f_R \in \Z[x_0,\ldots, x_n]$ are $R$ forms of degree $d$ that satisfy Definition~\ref{def:birch}.
Let $s_1,\ldots, s_R \in \{-1,1\}$.  Let  $\c B\subset \R^{n+1}$ be as in Definition~\ref{def:box}.
For  any  $z\geq 2 $ let $W_z$ be  as in Definition~\ref{def:wzez}. Let $k:\N^R\to \mathbb C$ be any function.
Assume  that there exists $C\geq 0 $ such that for all $q\in \N, \b a \in (\Z/q\Z)^R$ one has 
 $ |\rho(\b a , q )| \leq q^C.$  Then for all   
 $P \geq 1$
 we have   \begin{align*}
\frac{1}{P^{n+1}} 
&\sum_{\substack{ \b t \in \Z^{n+1}\cap P\c B \\ \min_j s_j f_j(\b t ) > 0 }}  k(s_1 f_1(\b t), \ldots, s_R f_R(\b t) ) 
\\
&=   
\left(\,\,   \int\limits_{\substack{ \b t \in \c B \\ 
 s_j f_j(\b t ) > P^{-d} \ \forall j} }  
\omega\left( P^d (s_j  f_j(\b t ))\right)  \mathrm d \b t 
\right)
 \sum_{\b t \in (\Z/W_z\Z )^{n+1} } \frac{ \rho((s_j  f_j(\b t ) ) ,W_z) }{ W_z^{n-R} }
\\
&+O\left( \frac{\|k \|_1}{P^{Rd}  }   (P^{-\delta}+\widetilde{\epsilon}(z)+z^{-c'}) +\frac{E(bP^d; W_z ) W_z^R  }{P^{Rd}   } \right),
\end{align*} where the implied constant depends at most on $f_i$, 
$E(\cdot)$, $\widetilde{\epsilon}(\cdot)$ and $b$
are respectively  given  in~\eqref{eq:basicproperty}
and Definition~\ref{def:box}
and $$
\|k \|_1:= \sum_{\boldsymbol \nu \in \N^R \cap [1 , b P^d]^R} |k(\boldsymbol \nu)| 
.$$ 
\end{theorem}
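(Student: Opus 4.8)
The plan is to evaluate the sum by detecting the congruence/archimedean constraints with the Hardy--Littlewood circle method, using a ``multiplicative model'' for $k$ rather than exponential sums. First I would decompose $k$ on the box $[1,bP^d]^R$ according to residues modulo $W_z$, writing
\[
k(\b m) = \sum_{\b a \bmod{W_z}} \mathbbm{1}_{\b m\equiv \b a\,(W_z)}\, k(\b m),
\]
and replace the inner sum over $\b m\equiv\b a$ by its model $\rho(\b a,W_z)\int\omega$ up to the error $E(bP^d;W_z)$. This reduces the problem to counting $\b t\in\Z^{n+1}\cap P\c B$ with $(f_j(\b t))_j\equiv \b a\bmod{W_z}$ for each fixed $\b a$ coprime to $W_z$, weighted by a smooth archimedean density $\omega(P^d(s_jf_j(\b t)))$. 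Summing the contribution of $\b a$ against $\rho(\b a,W_z)$, and multiplying by the weight, is what will produce the claimed product of the archimedean integral with $\sum_{\b t\bmod{W_z}}\rho((s_jf_j(\b t)),W_z)/W_z^{n-R}$.

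The core analytic input is then a smooth, $W_z$-level version of Birch's theorem: for a generic system $f_1,\dots,f_R$ as in Definition~\ref{def:birch}, the number of $\b t\in\Z^{n+1}\cap P\c B$ with $f_j(\b t)$ in prescribed residue classes mod $W_z$ and with $f_j(\b t)$ lying in prescribed (rescaled) archimedean windows satisfies
\[
\frac{1}{P^{n+1}}\#\{\cdots\} = \Big(\text{archimedean integral}\Big)\cdot \frac{1}{W_z^{n-R}}\#\{\b t\bmod{W_z}:\ \cdots\} + O\big(P^{-\delta}\cdot(\text{vol factors})\big),
\]
uniformly in $W_z\leq P^{\eta}$ for some small $\eta>0$ coming from $c'$. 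I would obtain this by running Birch's minor/major arc analysis for the generating function $\sum_{\b t\in P\c B} e(\bbe\cdot \f(\b t))$ with an extra smooth weight carrying the archimedean localisation and with the residue condition built in by expanding $\mathbbm{1}_{f_j(\b t)\equiv a_j}$ as $W_z^{-1}\sum_{b_j\bmod{W_z}} e(b_j(f_j(\b t)-a_j)/W_z)$; the exponent $\mathfrak B>R(R+1)2^{d-1}(d-1)$ is exactly what Birch needs so that the minor-arc bound beats the trivial $P^{n+1-Rd}$ by a power saving, and the $z^{-c'}$ in the error is the loss incurred by truncating the singular series at primes $\le z$ (resp. allowing the square-full modulus $W_z$). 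This is presumably the part that reuses~\cite{MR0150129} and the $R=1$ precursor~\cite{destagnolR=1} most directly.

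With that equidistribution statement in hand, the assembly is bookkeeping: reintroduce the weight $k(\b m)-\rho\int\omega$ to convert the count into the main term plus $E(bP^d;W_z)$ times the number of residue classes ($\leq W_z^R$) times the per-class count ($\ll P^{n+1-Rd}W_z^{R-n}$ after the main asymptotic), giving the $E(bP^d;W_z)W_z^R/P^{Rd}$ error; collect the $P^{-\delta}$ Birch error against $\|k\|_1$ (since $\sum_{\b m}|k(\b m)|=\|k\|_1$ controls the total mass); and absorb into $\|k\|_1 P^{-Rd}$ both the $\widetilde\epsilon(z)$ loss from $\b t$ for which some $f_j(\b t)$ is divisible by a large prime power $\le z$ (handled exactly as the probability bound after Definition~\ref{def:wzez}) and the $z^{-c'}$ singular-series truncation loss. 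One also has to check the $\b t$ with $\min_j s_jf_j(\b t)\le 0$ or $s_jf_j(\b t)\le P^{-d}$ are negligible, which is a boundary/small-value estimate handled by the upper bound of~\S\ref{s:pprrooff} (the $P^{-\delta}$ term). The bound $|\rho(\b a,q)|\le q^C$ is only used to guarantee all these manipulations converge and that the main term $\sum_{\b t\bmod{W_z}}\rho/W_z^{n-R}$ is well defined as $W_z$ grows.

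The main obstacle I expect is making Birch's circle method uniform in the modulus $W_z$ \emph{and} compatible with the smooth archimedean localisation simultaneously: one must track how the major-arc main term factors as (local archimedean integral)$\times$(local $p$-adic densities for $p\le z$)$\times$(tail over $p>z$), show the tail contributes the $z^{-c'}$ error (this is where the codimension hypothesis, via $c'>0$, enters quantitatively), and ensure the minor-arc estimate loses only an acceptable power of $W_z$ — forcing the constraint $W_z\le P^{\eta}$, i.e. $z\le (\log P)^{O(1)}$ is what ultimately produces the very slow $(\log\log\log B)^{-A}$ saving in Theorem~\ref{thm:mainthrm} downstream. Everything else is careful but standard manipulation.
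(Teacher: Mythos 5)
Your proposal matches the paper's approach in all essentials: both proceed via Birch's circle method for the representation count $r(\boldsymbol\nu)=\#\{\b t\in P\c B: f_j(\b t)=s_j\nu_j\}$, truncate the singular series at modulus $W_z$ to produce $\mathfrak{S}^\flat$ (which only sees $\boldsymbol\nu\bmod W_z$), and then perform a multivariate partial summation against the singular integral to invoke the equidistribution hypothesis on $k$ via $\rho$ and $\omega$; the error sources you identify — the Birch power-saving $P^{-\delta}$, the high prime-power loss $\widetilde\epsilon(z)$, the singular-series tail $z^{-c'}$ controlled by $c'$ from Definition~\ref{def:birch}, and the partial-summation error $E(bP^d;W_z)W_z^R$ — are exactly those in Lemmas~\ref{lem:cramer}--\ref{lem:guitar}. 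One caveat: the opening move as phrased (replace $\sum_{\b m\equiv\b a}k(\b m)$ by $\rho\int\omega$ and then ``reduce to counting $\b t$'') has the logic inverted — you must first apply Birch to disentangle the multiplicities $r(\boldsymbol\nu)$ before the equidistribution of $k$ can be exploited via partial summation — but your later invocation of a ``smooth $W_z$-level Birch'' and the step ``reintroduce $k-\rho\int\omega$'' recover the correct order, which is precisely that of the paper.
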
 

The following remark provides some applications of this result of independent interest. 
  \begin{remark}[Applications]
  \label{paco de 
  lucia}
Theorem~\ref{lem:vachms} enables the extension of various asymptotic results from~\cite{destagnolR=1}, originally formulated for a single polynomial, to the setting of \( R \) polynomials. For instance, one can prove  
Chowla's conjecture to arbitrarily many multivariable forms  
satisfying the assumptions of Theorem~\ref{lem:vachms}, namely
\[
\lim_{P\to \infty}  
\frac{1}{P^{n+1}}  
\sum_{\substack{\b t \in \Z^{n+1} \cap [-P,P]^{n+1} \\ \prod_i f_i(\b t) \neq 0 }}  
\mu(|f_1(\b t)|) \dots \mu(|f_R(\b t)|) = 0,
\]  
where \(\mu\) denotes the Möbius function. This 
generalises~\cite[Theorem 1.6]{destagnolR=1}.
Similarly, one can feed the Siegel--Walfisz theorem 
into Theorem~\ref{lem:vachms} to prove 
the Bateman--Horn conjecture arbitrarily many multivariable forms
satisfying the assumptions of Theorem~\ref{lem:vachms}:
  \[
\#\{\b t \in \Z^{n+1} \cap [1,P]^{n+1}: 
\textrm{ each } f_i(\b t ) \textrm{ is prime}\}
=  
\frac{ \mathfrak S(\b f )  }{d^R} 
\frac{P^{n+1}}{ (\log P )^R}+
o\l(\frac{P^{n+1}}{ (\log P )^R} \r)
,\] where each $f_i$ is assumed to take a
non-negative values in $[0,1]^{n+1}$ and $$\mathfrak S(\b f ) :=
\prod_{p=2}^\infty 
\frac{1-
\#\{\b t \in \F_p^{n+1} : 
 f_1(\b t ) \cdots f_n (\b t ) \neq 0  \}p^{-n-1}}{(1-p^{-1})^R} .$$
 This last result generalises \cite{prime} to more than one polynomial but with more restrictive conditions on the number of variables.
\end{remark}

 \begin{remark} \label{rem:billakos evans}
If $  m_p\geq 2$ for all $p$, we have 
$$\widetilde{\epsilon}(z) \leq  \frac{3}{2^{\min_p m_p}}.
 $$
This is because  for any $m\geq 2 $ we have 
\beq{eq:adriano in siris by pergolesi}  
{\sum_{p\leq z} p^{-m}
\leq 
2^{-m}+
\int_2^\infty \frac{\mathrm dt }{t^m} =
2^{-m}+
\frac{ 2^{1-m} }{ m -1 } 
\leq \frac{3}{2^{m} }
.}
\end{remark}

\subsection{A uniform upper-bound} 
\label{s:mates 1000pounds}
We    verify the following result, 
which implies that when $k$ 
takes only non-negative values, then the expected
upper bound holds 
without knowledge of $k$ on   progressions.

\begin{lemma}
\label{corl:uperbnd}  
For $f_i$ and $s_i, b, \c B $   as in Theorem~\ref{lem:vachms}
and  any   $k:\N^R \to \mathbb C$, 
$P\geq 1 $
we have  
\[ 
\sum_{\substack{ \b t \in \Z^{n+1}\cap P\c B \\ 
\min_j s_j f_j(\b t ) > 0 }} 
 k(s_1 f_1(\b t), \ldots, s_R f_R(\b t) )  
 \ll 
P^{n+1-Rd}
\sum_{\boldsymbol \nu \in \N^R \cap [1 , b P^d]^R} 
|k(\boldsymbol \nu) | ,\] where the implied constant 
depends on $f_i $ but is independent of $k$.
\end{lemma}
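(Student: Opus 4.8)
The plan is to reduce the left-hand side to a counting problem in $\mathbf{N}^R$ by grouping the $\b t$'s according to the value of the tuple $(s_1 f_1(\b t),\ldots, s_R f_R(\b t))$, and then to bound the number of $\b t$ mapping to a fixed tuple uniformly. Concretely, write
\[
\sum_{\substack{\b t \in \Z^{n+1}\cap P\c B \\ \min_j s_j f_j(\b t)>0}} k(s_1 f_1(\b t),\ldots, s_R f_R(\b t))
= \sum_{\boldsymbol\nu} k(\boldsymbol\nu)\, \#\{\b t \in \Z^{n+1}\cap P\c B : s_j f_j(\b t)=\nu_j \ \forall j\}.
\]
By Definition~\ref{def:box} we have $|f_j(\b t)|\le b_j/2 \le b$ for $\b t \in P\c B$ after rescaling (more precisely, $|f_j(\b t)| \le b P^d /2$ for $\b t\in P\c B$, since $\c B\subset[-1,1]^{n+1}$ and $f_j$ is homogeneous of degree $d$), so only $\boldsymbol\nu\in \N^R\cap[1,bP^d]^R$ contribute. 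Thus it suffices to show the uniform bound
\[
\#\{\b t \in \Z^{n+1}\cap P\c B : f_j(\b t)=\nu_j \ \forall j\} \ll P^{n+1-Rd}
\]
with an implied constant depending only on $f_1,\ldots,f_R$ (and not on $\boldsymbol\nu$ or $P$).

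\textbf{The key estimate.} To obtain the level-set bound I would invoke the circle-method machinery of Birch under Definition~\ref{def:birch}, which is precisely the source cited there. The standard output of Birch's method (see \cite{MR0150129}) is that, for a system of $R$ forms of degree $d$ satisfying $\mathfrak B > R(R+1)2^{d-1}(d-1)$, the number of integral points of height at most $P$ in a box on the affine variety $\{f_1 = c_1,\ldots, f_R = c_R\}$ is $O(P^{n+1-Rd})$ \emph{uniformly in the $c_i$}; this uniformity is exactly what the minor-arc analysis delivers, since the singular integral and singular series contributions are bounded independently of the $c_i$ (one only loses the main term when the $c_i$ grow, but the upper bound persists). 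The hypothesis $\mathfrak B$ being strictly larger than $R(R+1)2^{d-1}(d-1)$ — equivalently $c' > 0$ in~\eqref{eq:blindleadingtheblind} — guarantees that the minor-arc error is genuinely smaller than the trivial bound $P^{n+1}$ by a power saving, so that the count is of the expected order $P^{n+1-Rd}$ up to a constant. Summing $k(\boldsymbol\nu)$ against these level-set counts and pulling the constant out of the sum yields
\[
\sum_{\substack{\b t \in \Z^{n+1}\cap P\c B \\ \min_j s_j f_j(\b t)>0}} k(s_1 f_1(\b t),\ldots, s_R f_R(\b t)) \ll P^{n+1-Rd} \sum_{\boldsymbol\nu\in\N^R\cap[1,bP^d]^R} |k(\boldsymbol\nu)|,
\]
which is the claim.

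\textbf{Main obstacle.} The delicate point is the \emph{uniformity in $\boldsymbol\nu$} of the level-set count. Birch's theorem as classically stated gives an asymptotic for the number of solutions of $f_1=\cdots=f_R=0$; here we need the inhomogeneous version $f_j = \nu_j$ with $\nu_j$ possibly as large as $bP^d$, and we need the implied constant to be independent of $\boldsymbol\nu$. I expect this to follow by homogenising: introduce an auxiliary variable and apply the circle method to the system $f_j(\b x) - \nu_j x_{n+2}^d = 0$ on a suitable box, or — more directly — rerun the Weyl-differencing and minor-arc estimates of \cite{MR0150129} observing that the bound $N(\mathfrak a, q) \ll q^{n+1-\mathfrak B 2^{1-d}/(d-1) + \varepsilon}$ on the exponential sums over residues, and hence the minor-arc bound, does not see the $\nu_j$ at all (the $\nu_j$ only enter the linear phase $e(\sum_j \alpha_j \nu_j)$, which has modulus $1$). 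Alternatively, one could cite a ready-made uniform version of Birch's theorem from the literature (e.g.\ Schindler's or Browning--Heath-Brown's refinements) that makes the dependence on the "constant terms" explicit. Either way, once the uniform level-set bound is in hand the rest is the bookkeeping sketched above, and the positivity hypothesis $c'>0$ is exactly what is needed to make the error term in that count negligible against the main term $P^{n+1-Rd}$.
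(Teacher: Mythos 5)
Your proposal follows essentially the same route as the paper: decompose the sum over level sets $\{\b t : s_jf_j(\b t)=\nu_j\ \forall j\}$, bound each level-set count uniformly by $O(P^{n+1-Rd})$ via Birch's circle method, and pull the constant out. The paper resolves the uniformity issue you flag simply by citing Birch's equations~(4) p.~260, (20) p.~256 and (11) p.~255, which give the level-set asymptotic $\ll P^{n+1-Rd}\mathfrak S(\boldsymbol\nu)\Psi(\boldsymbol\nu/P^d)+P^{n+1-Rd-\delta}$ and show that $\mathfrak S$ and $\Psi$ converge absolutely and are bounded independently of $\boldsymbol\nu$, confirming your heuristic that the $\nu_j$ only enter through unimodular phases.
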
  
\begin{proof} 
Note that for large $P$ and $\b t \in P\c B$ we have  
$0\leq |f_i(\b t)| \leq b P^d$, hence,
\beq{eq:broschi}{
\sum_{\substack{ \b t \in \Z^{n+1}
\cap
P\c B \\ \min_j s_j f_j(\b t ) > 0 }}  k(s_1 f_1(\b t), \ldots, s_R f_R(\b t) ) 
=  
\sum_{\boldsymbol \nu \in \N^R \cap [1 , b P^d]^R} k(\boldsymbol \nu) \#\{\b t \in \Z^{n+1} \cap P\c B  :  f_j(\b t)= s_j  \nu_j \forall j \}.}The counting function in the right hand-side is 
$\ll P^{n+1-Rd} \mathfrak S(\boldsymbol \nu)
\Psi(\boldsymbol \nu/P^d)+P^{n+1-Rd-\delta }$
by the work of Birch~\cite[Equation (4), page 260]{MR0150129},
where $\delta>0$ depends on $f_i$, while   
$\mathfrak S, \Psi$
are the associated singular series and singular integral.
By~\cite[Equation (20), page 256]{MR0150129}
and~\cite[Equation (11), page 255]{MR0150129}
both $\mathfrak S, \Psi$ converge absolutely and are 
bounded independently of $\boldsymbol \nu$. 
Hence, ~\eqref{eq:broschi} is 
$$ \ll P^{n+1-Rd}\sum_{\boldsymbol \nu \in \N^R \cap [1 , b P^d]^R} 
|k(\boldsymbol \nu) |.$$
\end{proof}

\subsection{Proof of Theorem~\ref{lem:vachms}}
\label{s:pprrooff}
\begin{proof}
We adapt the   strategy as in the proof of the 
case $R=1$ from~\cite{{destagnolR=1}};
details are given only  when necessary.
The product of local densities 
for  representing 
a vector  $\boldsymbol \nu \in \Z^R$ by   polynomials   $\b f$ 
is 
$$\mathfrak S(\boldsymbol  \nu):=\prod_{\substack{ p=2 \\ p \textrm{ 
prime}}}^\infty \lim_{m\to \infty} 
\frac{\#\{\b t \in (\Z/p^{m}\Z )^{n+1}:  f_j(\b t ) = \nu_j, \, \forall j\in \{1,\dots,R\} \}}{p^{m(n+1-R) }}.$$ 
Recall $W_z$ from Definition~\ref{def:wzez}.
We first approximate the product by a truncated version 
$$\mathfrak S^\flat(\boldsymbol \nu):= 
\frac{\#\{\b t \in (\Z/W_z\Z )^{n+1}:  f_j(\b t ) = \nu_j, \, \forall j\in \{1,\dots,R\}  \}}{W_z^{n+1-R }} .$$ 
\begin{lemma}\label{lem:cramer}For $f_1,\dots,f_R$   as  in      Theorem~\ref{lem:vachms} and 
 all $\boldsymbol \nu \in \Z^R$ we have 
$$\mathfrak S(\boldsymbol\nu)-\mathfrak S^\flat(\boldsymbol\nu)\ll  \widetilde{\epsilon}(z) +z^{-c'},$$ where the implied constant is independent of $\boldsymbol\nu$.
\end{lemma}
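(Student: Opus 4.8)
The plan is to estimate the difference between the full singular series $\mathfrak S(\boldsymbol\nu)$ and its truncation $\mathfrak S^\flat(\boldsymbol\nu)$ by splitting the Euler product according to the size of the prime $p$ and, for small primes, according to the $p$-adic valuation of the relevant solution counts. First I would record the local factor $\sigma_p(\boldsymbol\nu):=\lim_{m\to\infty} p^{-m(n+1-R)}\#\{\b t \in (\Z/p^m\Z)^{n+1}: f_j(\b t)=\nu_j \ \forall j\}$, so that $\mathfrak S(\boldsymbol\nu)=\prod_p \sigma_p(\boldsymbol\nu)$, and observe that $\mathfrak S^\flat(\boldsymbol\nu)$ is (by CRT) the finite product $\prod_{p\le z} p^{-m_p(n+1-R)}\#\{\b t\in (\Z/p^{m_p}\Z)^{n+1}: f_j(\b t)=\nu_j\}$; the latter is a truncation of the former at level $m_p$ for $p\le z$ and at level $0$ (i.e.\ omitted) for $p>z$. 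The key analytic input, available from Birch's work cited in the proof of Lemma~\ref{corl:uperbnd} — specifically the bounds behind~\cite[Equations (20) and (11)]{MR0150129} and the Definition~\ref{def:birch} hypothesis via the constant $c'$ in~\eqref{eq:blindleadingtheblind} — is an estimate of the shape
\[
\Big| p^{-m(n+1-R)}\#\{\b t\in(\Z/p^m\Z)^{n+1}: f_j(\b t)=\nu_j\} - \sigma_p(\boldsymbol\nu)\Big| \ll p^{-m(1+c'')}
\]
for a suitable $c''>0$ tied to $\mathfrak B$, together with the crude bound $|\sigma_p(\boldsymbol\nu)-1|\ll p^{-1-c''}$ for all $p$, both uniform in $\boldsymbol\nu$; from the latter one gets absolute convergence of $\prod_p\sigma_p$ and a uniform bound $\mathfrak S(\boldsymbol\nu)\ll 1$. (I would double-check whether the available exponent is $1+c'$ or some explicit function of $\mathfrak B$; this affects only the constant in the final error, since $z^{-c'}$ already dominates any $z^{-c''}$ with $c''\ge c'$, and one may shrink $c'$ if needed.)

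Next I would telescope. Write $\mathfrak S(\boldsymbol\nu)-\mathfrak S^\flat(\boldsymbol\nu)$ as a product of per-prime factors differing from their truncations, and use the elementary inequality that for complex numbers with $|a_p|,|b_p|\le 2$ (say), $\big|\prod a_p-\prod b_p\big|\le \big(\prod\max(|a_p|,|b_p|)\big)\sum|a_p-b_p|/\min(\dots)$, i.e.\ bound the product difference by $C\sum_p |(\text{truncation at }m_p)-\sigma_p|$ where the sum runs over $p\le z$ (contributing the $m_p$-level truncation error) plus $\sum_{p>z}|\sigma_p-1|$ (contributing the tail). For $p\le z$ the per-prime error is $\ll p^{-m_p(1+c'')}\le p^{-(1+m_p)}$ (using $m_p c''\ge 0$, or more carefully $p^{-m_p(1+c'')}\le p^{-1-m_p}$ when $m_p\ge 1/c''$, adjusting $m_p$ or absorbing finitely many primes into the constant), so summing gives $\ll \widetilde\epsilon(z)$ by Definition~\ref{def:wzez}. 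For $p>z$ the tail $\sum_{p>z}|\sigma_p-1|\ll \sum_{p>z}p^{-1-c''}\ll z^{-c''}\ll z^{-c'}$. Combining, $\mathfrak S(\boldsymbol\nu)-\mathfrak S^\flat(\boldsymbol\nu)\ll \widetilde\epsilon(z)+z^{-c'}$ with an implied constant depending only on $\b f$, as claimed.

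The main obstacle is extracting, cleanly and uniformly in $\boldsymbol\nu$, the quantitative local estimate $|p^{-m(n+1-R)}\#\{\cdots\}-\sigma_p(\boldsymbol\nu)|\ll p^{-m(1+c'')}$ from Birch's machinery: his singular-series analysis is phrased for the system $f_j=0$ and in terms of exponential sums over $\FF_p$, so one must check that the inhomogeneous shift $f_j(\b t)=\nu_j$ does not spoil the bounds — it does not, because completing the sum in $\boldsymbol\nu$ mod $p^m$ is exactly what produces the solution count, and the relevant cancellation estimates (Weyl-type, controlled by $\mathfrak B$) are insensitive to the additive character twist by $\nu_j$. I expect this to reduce, after a short computation, to Birch's~\cite[Lemma 5.4 / (20)]{MR0150129}-type bound with the constant $c'$ of~\eqref{eq:blindleadingtheblind}; alternatively, since the case $R=1$ is exactly~\cite{destagnolR=1}, one can cite that the identical argument there goes through mutatis mutandis for general $R$, which is consistent with the remark opening the proof of Theorem~\ref{lem:vachms}.
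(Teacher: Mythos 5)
Your proof takes a genuinely different route from the paper's. The paper expands $\mathfrak S(\boldsymbol\nu)$ in the Ramanujan-type series $\sum_q q^{-n-1}\sum_{\gcd(\b a,q)=1} S_{\b a,q}\,\mathrm e(-\tfrac1q\sum_j a_j\nu_j)$, observes (by orthogonality) that $\mathfrak S^\flat$ is exactly the subsum over $q\mid W_z$, and bounds the tail by Birch's exponential-sum estimate as in the $R=1$ analogue of~\cite{destagnolR=1}. You instead use the Euler-product structure $\mathfrak S=\prod_p\sigma_p(\boldsymbol\nu)$, identify $\mathfrak S^\flat$ via CRT with the product over $p\le z$ of $m_p$-level truncations, and telescope. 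This is a perfectly valid alternative and tracks closely what Lemma~\ref{lem:freezers} (proved later in the paper for another purpose) supplies: equations~\eqref{eq:freezers1} and~\eqref{eq:freezers2} are precisely the two local estimates your telescoping needs, so the approach is arguably more modular.

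The one place to be careful is the exponent in the per-prime truncation estimate. You propose $\bigl|p^{-m(n+1-R)}\#\{\cdots\}-\sigma_p(\boldsymbol\nu)\bigr|\ll p^{-m(1+c'')}$, but the truncation error is controlled by the \emph{tail} of the $p$-adic expansion starting at level $m+1$, and the correct bound is $\ll p^{-(m+1)(1+c)}$ (cf.~\eqref{eq:freezers2}). The $+1$ matters: from $p^{-m_p(1+c'')}$ alone, the sum $\sum_{p\le z}p^{-m_p(1+c'')}$ is not in general dominated by $\widetilde{\epsilon}(z)=\sum_{p\le z}p^{-1-m_p}$ unless $m_pc''\ge1$, and Definition~\ref{def:wzez} only requires $m_p\ge1$; your suggested fixes ("adjusting $m_p$", "absorbing finitely many primes") do not resolve this since $m_p$ is the user's choice and there are $\pi(z)$ primes, not finitely many. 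With the exponent $-(m_p+1)(1+c)$ one has $p^{-(m_p+1)(1+c)}\le p^{-1-m_p}$ for every $p$ and every $m_p\ge1$, so the small-prime sum is bounded by $\widetilde{\epsilon}(z)$ directly, the tail over $p>z$ gives $z^{-c'}$ as you say, and the argument closes.
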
\begin{proof}
We use the fact that 
$$\mathfrak S((s_j  \nu_j)):=
 \sum_{ q\in \N } q^{-n-1} 
 \sum_{\substack{ \b a \in (\Z/q\Z)^R \\ \gcd(q,\b a ) =1  } } 
 S_{\b a,q}  \mathrm e \l(-\frac{1}{q} \sum_{j=1}^R a_j   s_j  \nu_j \r),
\hspace{0,5cm}
 S_{\b a,q} :=\sum_{\b t \in (\Z/q\Z)^{n+1}}\mathrm e \l(\sum_{i=1}^R a_i f_i(\b t ) \r),$$
 where $\mathrm e(\cdot)=e^{2 i \pi \cdot}.$
 The rest of the proof is identical using \cite{MR0150129} and $\varepsilon=c' $.
\end{proof}

The Hardy--Littlewood singular integral for      
representing     $\nu_1,\dots,\nu_R$ by       $f_1,\dots,f_R$ is   
$$J(\boldsymbol\mu):=\int_{\R^R} I(\c B;\boldsymbol\gamma)\mathrm e(-\langle  \boldsymbol\gamma , \boldsymbol\mu \rangle   ) \mathrm d \boldsymbol\gamma,
\hspace{0,5cm} I(\c B;\boldsymbol\gamma):=\int_{\c B} \mathrm e \l(\sum_{i=1}^R \gamma_i f_i(\b t ) \r )\mathrm d \b t, $$
 where $\langle \cdot , \cdot \rangle$ denotes the inner product in $\R^R$

We are now in position to start averaging the arithmetic function over the polynomial values.
\begin{lemma}
\label{lem:letsstart}For all large $P$ one has 
\begin{align*} 
&\sum_{\substack{ \b t \in \Z^{n+1}
\cap P\c B \\ \min_j s_j f_j(\b t ) > 0 }}  k(s_1 f_1(\b t), \ldots, s_R f_R(\b t) ) 
\\
=&  \frac{P^{n+1-Rd} }{W_z^{n+1-R}} \sum_{\b t \in (\Z/W_z )^{n+1} }  \int_{\R^R  } I(\c B;\boldsymbol{\gamma})
 \left\{ \sum_{ \substack{ \boldsymbol{\nu}\in \N\cap[1,b P^d]^R \\ s_j\nu_j \equiv f_j(\b t ) \md{W_z} \, \forall j }} k(\boldsymbol{\nu})  
  \mathrm e\left(- \langle \boldsymbol{\gamma},\mathbf{s}\boldsymbol{\nu}\rangle P^{-d}    \right) \r\} \mathrm d \boldsymbol{\gamma}
\\  +&O\left( P^{n+1-Rd} 
\left( P^{-\delta} +  \widetilde{\epsilon}(z)  + z^{-c'}  
 \r) 
\|k\|_1\r).
\end{align*}
\end{lemma}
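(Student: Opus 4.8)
The plan is to start from the sum in Lemma~\ref{lem:letsstart}'s left-hand side and execute the same ``detect a congruence, then open into exponential integrals'' manoeuvre used in~\cite{destagnolR=1} for $R=1$. First I would pass from $\b t$-space to $\boldsymbol\nu$-space exactly as in~\eqref{eq:broschi}: write
\[
\sum_{\substack{ \b t \in \Z^{n+1}\cap P\c B \\ \min_j s_j f_j(\b t ) > 0 }}  k(s_1 f_1(\b t), \ldots, s_R f_R(\b t) )
= \sum_{\boldsymbol\nu \in \N^R\cap[1,bP^d]^R} k(\boldsymbol\nu)\,\#\{\b t \in \Z^{n+1}\cap P\c B : f_j(\b t)=s_j\nu_j\ \forall j\},
\]
which is legitimate because $0\le |f_i(\b t)|\le bP^d$ on $P\c B$ for large $P$. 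The inner counting function is the one Birch's method handles; by~\cite[Eq.~(4), p.~260]{MR0150129} it equals $P^{n+1-Rd}\mathfrak S(\boldsymbol\nu)\,\Psi(\boldsymbol\nu/P^d)+O(P^{n+1-Rd-\delta})$ for some $\delta>0$ depending on $f_i$, and by~\cite[Eq.~(20), p.~256]{MR0150129},~\cite[Eq.~(11), p.~255]{MR0150129} both $\mathfrak S$ and $\Psi$ are absolutely convergent and bounded uniformly in $\boldsymbol\nu$. Here $\Psi$ is realised through the singular integral $J$, and for the present display I would prefer to keep it in its integral-over-$\R^R$ shape, i.e.\ $\Psi(\boldsymbol\nu/P^d)=\int_{\R^R}I(\c B;\boldsymbol\gamma)\,\mathrm e(-\langle\boldsymbol\gamma,\mathbf s\boldsymbol\nu\rangle P^{-d})\,\mathrm d\boldsymbol\gamma$, since that is what reassembles into the stated formula.

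Next I would replace $\mathfrak S(\boldsymbol\nu)$ by its truncated version $\mathfrak S^\flat(\boldsymbol\nu)$, at the cost of the error term coming from Lemma~\ref{lem:cramer}: since $\mathfrak S(\boldsymbol\nu)-\mathfrak S^\flat(\boldsymbol\nu)\ll\widetilde\epsilon(z)+z^{-c'}$ uniformly in $\boldsymbol\nu$, the total error introduced is
\[
\ll P^{n+1-Rd}(\widetilde\epsilon(z)+z^{-c'})\sum_{\boldsymbol\nu\in\N^R\cap[1,bP^d]^R}|k(\boldsymbol\nu)| = P^{n+1-Rd}(\widetilde\epsilon(z)+z^{-c'})\|k\|_1,
\]
and likewise the $O(P^{n+1-Rd-\delta})$ error from Birch, after multiplying by $|k(\boldsymbol\nu)|$ and summing, contributes $\ll P^{n+1-Rd-\delta}\|k\|_1$, which I would absorb into $P^{n+1-Rd}P^{-\delta}\|k\|_1$. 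After this substitution the main term is
\[
P^{n+1-Rd}\sum_{\boldsymbol\nu\in\N^R\cap[1,bP^d]^R} k(\boldsymbol\nu)\,\mathfrak S^\flat(\boldsymbol\nu)\int_{\R^R}I(\c B;\boldsymbol\gamma)\,\mathrm e(-\langle\boldsymbol\gamma,\mathbf s\boldsymbol\nu\rangle P^{-d})\,\mathrm d\boldsymbol\gamma.
\]
The key algebraic step is then to unfold $\mathfrak S^\flat(\boldsymbol\nu)=W_z^{-(n+1-R)}\#\{\b t\in(\Z/W_z\Z)^{n+1}:f_j(\b t)=\nu_j\ \forall j\}$ as a sum over residue classes $\b t\bmod W_z$, and to rewrite the condition $f_j(\b t)\equiv\nu_j$ inside that residue class as $s_j\nu_j\equiv f_j(\b t)\pmod{W_z}$ after the sign substitution $\nu_j\mapsto s_j\nu_j$ in the original $\boldsymbol\nu$-sum (recalling $s_j\in\{-1,1\}$ so this is a bijection on $\N^R$, modulo the $\min_j s_j f_j>0$ / positivity bookkeeping already encoded). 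Interchanging the finite $\b t\bmod W_z$ sum, the $\R^R$-integral (justified by the absolute convergence of $J$), and the finite $\boldsymbol\nu$-sum then yields precisely the displayed identity of Lemma~\ref{lem:letsstart}, with the inner brace $\sum_{\boldsymbol\nu,\ s_j\nu_j\equiv f_j(\b t)}k(\boldsymbol\nu)\mathrm e(-\langle\boldsymbol\gamma,\mathbf s\boldsymbol\nu\rangle P^{-d})$ being exactly what appears there.

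The step I expect to be the main obstacle is not any single estimate but the careful tracking of the sign vector $\mathbf s=(s_1,\dots,s_R)$ and the positivity constraint $\min_j s_j f_j(\b t)>0$ through the change of variables $\boldsymbol\nu\mapsto\mathbf s\boldsymbol\nu$: one must check that restricting $\boldsymbol\nu$ to $[1,bP^d]^R$ on the $\boldsymbol\nu$-side matches, up to a negligible boundary set, the range $f_j(\b t)=s_j\nu_j$ arising from $\b t\in P\c B$ with $s_jf_j(\b t)>0$, and that the excluded values $s_j f_j(\b t)=0$ and the near-boundary $s_jf_j(\b t)\le P^{-d}\cdot P^d$ contribute only to the $P^{-\delta}\|k\|_1$ error via Lemma~\ref{corl:uperbnd} applied on a thin sub-box (a codimension-one set in $P\c B$ of size $\ll P^n$, which after the trivial bound on $k$ over the relevant $\boldsymbol\nu$ is $\ll P^{n+1-Rd-1}\|k\|_1$, harmless). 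Once that bookkeeping is pinned down, everything else is a uniform-in-$\boldsymbol\nu$ application of the quoted bounds from~\cite{MR0150129} together with Lemma~\ref{lem:cramer}, with no further analytic input required at this stage.
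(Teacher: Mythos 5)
Your proposal is correct and is exactly the adaptation the paper alludes to when it writes that ``the reasoning in~\cite[Lemma 2.2]{destagnolR=1} can be adapted in a natural way'': expand via~\eqref{eq:broschi}, apply Birch's main-term formula with the uniform bounds on $\mathfrak{S}$ and $\Psi$, replace $\mathfrak{S}$ by $\mathfrak{S}^\flat$ with the $\widetilde{\epsilon}(z)+z^{-c'}$ error from Lemma~\ref{lem:cramer}, then unfold $\mathfrak{S}^\flat$ over residue classes modulo $W_z$ and recognise $\Psi$ as $J$. Two small remarks: the singular series and integral should carry the argument $\mathbf s\boldsymbol\nu$ rather than $\boldsymbol\nu$ (one is representing $f_j(\b t)=s_j\nu_j$, as the paper's own proof of Lemma~\ref{lem:cramer} makes explicit by writing $\mathfrak{S}((s_j\nu_j))$), a harmless slip that your subsequent unfolding repairs; and the boundary bookkeeping you flag in the last paragraph is in fact vacuous here, since $\min_j s_jf_j(\b t)>0$ for integer values already forces $s_jf_j(\b t)\in\{1,\dots,bP^d\}$, so~\eqref{eq:broschi} is an exact identity rather than an approximation.
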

\begin{proof} Starting from~\eqref{eq:broschi}, the reasoning in~\cite[Lemma 2.2]{destagnolR=1} can be   adapted in a natural way. \end{proof}
The following mutlivariable version of partial summation   can   be proved by induction on $R$.
\begin{lemma}
\label{partsummation}
    Assume that we have a function $f: \N^R\to \mathbb C$
and $g_1,\dots,g_r: [1,\infty) \to \mathbb C$ be $\c C^1$. 
For  $x_1,\dots,x_R \in \R^R$ set 
$ 
F(x_1,\dots,x_R):=\sum_{n_i \leqslant x_i}f(n_1,\dots,n_R).
$
Then we have
  $$\sum_{\substack{ \b n \in \N^R \\ \forall j: n_j\leq x_j    } } f(\b n ) \prod_{j=1}^R g_j(n_j) 
 = \sum_{\substack{
 n_j \leqslant x_j, \, \forall j \in S 
 \\
S \subset\{1,\dots,R\}}}
(-1)^{\#S}
\l\{\prod_{ j \not\in S} g_j(x_j)\r\} T_S , $$ where $T_S $ is an integral over $\R^{\#S}$ that is given by 
$$ \int_{\substack{ \b t = (t_j)_{j\in S} \\ \forall j \in S: t_j \in [1, x_j] }}
F((x_j\mathds 1 (j \not\in S ) +t_j \mathds 1 (j \in S )) )
\prod_{ j \in S} g'_j(t_j) \mathrm d \b t .$$ 
\end{lemma}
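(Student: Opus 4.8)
The plan is to prove the multivariable summation-by-parts formula of Lemma~\ref{partsummation} by induction on $R$. The base case $R=1$ is the classical Abel summation formula: for $\mathcal C^1$ functions $g_1$ on $[1,\infty)$ and $F(x_1)=\sum_{n_1\leq x_1}f(n_1)$ one has
\[
\sum_{n_1\leq x_1}f(n_1)g_1(n_1)=F(x_1)g_1(x_1)-\int_1^{x_1}F(t_1)g_1'(t_1)\,\mathrm d t_1,
\]
which is exactly the claimed identity, the term $S=\emptyset$ giving $F(x_1)g_1(x_1)$ and the term $S=\{1\}$ giving $-\int_1^{x_1}F(t_1)g_1'(t_1)\,\mathrm d t_1$.

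For the inductive step, I would fix $R\geq 2$ and write the $R$-fold sum as an iterated sum, peeling off the last variable:
\[
\sum_{\substack{\b n\in\N^R\\ n_j\leq x_j\,\forall j}}f(\b n)\prod_{j=1}^Rg_j(n_j)
=\sum_{n_R\leq x_R}g_R(n_R)\Bigg(\sum_{\substack{\b n'\in\N^{R-1}\\ n_j\leq x_j\,\forall j<R}}f(n_1,\dots,n_{R-1},n_R)\prod_{j=1}^{R-1}g_j(n_j)\Bigg).
\]
For each fixed value of $n_R$, apply the induction hypothesis in the first $R-1$ variables to the function $(n_1,\dots,n_{R-1})\mapsto f(n_1,\dots,n_{R-1},n_R)$; this expresses the inner bracket as a sum over subsets $S'\subseteq\{1,\dots,R-1\}$ of a sign $(-1)^{\#S'}$ times $\{\prod_{j\in\{1,\dots,R-1\}\setminus S'}g_j(x_j)\}$ times an integral over the $t_j$ with $j\in S'$ of a partial-sum function $\widehat F_{n_R}$, against $\prod_{j\in S'}g_j'(t_j)$. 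Here $\widehat F_{n_R}$ is the partial sum in the $S'$-variables of $f(\cdot,n_R)$. Now substitute back and apply the $R=1$ Abel summation in the variable $n_R$ to the outer sum $\sum_{n_R\leq x_R}g_R(n_R)(\cdots)$, where $(\cdots)$ is, for each choice of $S'$, an already-integrated expression whose partial-sum function in $n_R$ is precisely $F$ restricted appropriately (the integral and the sum over $n_R$ commute since everything converges absolutely, $f$ being arbitrary but the sums finite). The two outcomes of this last Abel summation---the boundary term $g_R(x_R)$ and the integral term $-\int_1^{x_R}(\cdot)g_R'(t_R)\,\mathrm d t_R$---correspond exactly to the two cases $R\notin S$ and $R\in S$ in the statement, with $S=S'$ or $S=S'\cup\{R\}$; the sign $(-1)^{\#S}=(-1)^{\#S'}\cdot(-1)^{[R\in S]}$ matches, and $F$ evaluated with $x_j$ in the $\{1,\dots,R\}\setminus S$ slots and $t_j$ in the $S$ slots is reassembled from $\widehat F_{t_R}$ or $\widehat F_{x_R}$ together with the outer partial sum.

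The only genuine subtlety---and the step I would be most careful about---is the bookkeeping when commuting the finite sum over $n_R$ past the integral over the $t_j$, $j\in S'$, and verifying that the partial-sum function that appears after applying Abel summation in $n_R$ is literally $F$ with the correct mixed arguments, i.e.\ that $\sum_{n_R\leq t_R}\widehat F_{n_R}(\b t_{S'})=F$ evaluated at $(x_j\mathds 1(j\notin S)+t_j\mathds 1(j\in S))$ with $S=S'\cup\{R\}$ and $t_R$ in the $R$-slot. This is immediate from the definition of $F$ as an iterated partial sum and Fubini for finite sums, but writing the indices cleanly requires attention. Everything else---absolute convergence (trivial, all sums finite), the $\mathcal C^1$ hypothesis on the $g_j$ being exactly what Abel summation needs, and the matching of signs and boundary factors---is routine. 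No positivity or multiplicativity of $f$ is used, so the result holds for arbitrary $f:\N^R\to\mathbb C$ as stated.
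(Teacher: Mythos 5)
Your proof is correct and is exactly the approach the paper indicates; the paper itself states only that the lemma ``can be proved by induction on $R$'' and gives no further details, so there is nothing to diverge from. Your base case $R=1$ is classical Abel summation with the $S=\emptyset$ and $S=\{1\}$ terms matching the boundary and integral pieces; your inductive step correctly peels off $n_R$, applies the hypothesis to $f(\cdot,n_R)$, interchanges the finite sum over $n_R$ with the (finite-dimensional, compactly supported) integrals, and uses the one-variable formula again to split each $S'$ into $S=S'$ and $S=S'\cup\{R\}$; the sign $(-1)^{\#S}$, the factor $\prod_{j\notin S}g_j(x_j)$, and the identity $\sum_{n_R\le t_R}\widehat F_{n_R}=F$ all check out. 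The justification for the interchange (all sums finite, $F$ a bounded step function, $g_j'$ continuous) is adequate.
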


The next step is   to perform a partial summation to deal with the   sum over $\boldsymbol \nu$ in Lemma~\ref{lem:letsstart}.

\begin{lemma}\label{lem:partsum}For   $x\geq 1$, $a_i,q\in \mathbb Z$ with $q\neq 0$ and any $\boldsymbol{\mu}\in \R^R$ and $x_1,\dots,x_R \in [1,x]$ we have 
$$
\sum_{ \substack{ \boldsymbol{\nu}\in \N^R, \, \nu_j \leqslant x_j \\ \nu_j \equiv a_j \md{q} \, \forall j }} k(\boldsymbol{\nu})  
  \mathrm e( \langle \boldsymbol{\mu},\boldsymbol{\nu}\rangle    )=
\rho( \textbf{a},q) \int_{ \prod_{j=1}^R[1,x_j]^R} \omega(\mathbf{t})\mathrm e (\langle\boldsymbol{\mu},\mathbf{t}\rangle) \mathrm d\mathbf{t}
+O\left( E(x;q)\sum_{ S \subset   [1,R]} x^{\#S}\prod_{ j \in S} |\mu_j  |\r)
,$$   where the implied constant is absolute.
\end{lemma}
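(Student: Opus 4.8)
\textbf{Proof plan for Lemma~\ref{lem:partsum}.}
The plan is to reduce the weighted sum $\sum_{\boldsymbol\nu} k(\boldsymbol\nu)\mathrm e(\langle\boldsymbol\mu,\boldsymbol\nu\rangle)$ to the unweighted counting function controlled by $E(x;q)$ via the multivariable partial summation identity of Lemma~\ref{partsummation}, applied with $g_j(t)=\mathrm e(\mu_j t)$. First I would record that each $g_j$ is $\c C^1$ on $[1,\infty)$ with $g_j'(t)=2\pi i\,\mu_j\,\mathrm e(\mu_j t)$, so $|g_j(t)|=1$ and $|g_j'(t)|=2\pi|\mu_j|$ on $[1,x_j]$. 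Taking $f(\boldsymbol\nu)=k(\boldsymbol\nu)\mathds 1(\boldsymbol\nu\equiv\b a\bmod q)$ in Lemma~\ref{partsummation}, whose partial sums are $F(\b y)=\sum_{\nu_j\le y_j,\,\boldsymbol\nu\equiv\b a\bmod q}k(\boldsymbol\nu)$, gives an exact expansion of the left-hand side as $\sum_{S\subset\{1,\dots,R\}}(-1)^{\#S}\{\prod_{j\notin S}g_j(x_j)\}T_S$, with $T_S$ the stated integral over the $S$-coordinates.

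Next, in each term I would substitute the defining property of $E(x;q)$: since $\b a$ is a fixed residue with $\gcd(\b a,q)=1$ and all relevant coordinates lie in $[1,x]$, we have $F(\b y)=\rho(\b a,q)\int_{\prod[1,y_j]}\omega(\b t)\,\d\b t+O(E(x;q))$ uniformly, directly from~\eqref{eq:basicproperty} (the supremum there is over exactly such truncation points). Inserting this into each $T_S$ splits it into a main part $\rho(\b a,q)\int\omega(\cdots)\prod_{j\in S}g_j'(t_j)\,\d\b t$ plus an error of size $O(E(x;q)\prod_{j\in S}|g_j'(t_j)|\cdot\mathrm{meas}([1,x_j]^{\#S}))\ll E(x;q)\,x^{\#S}\prod_{j\in S}|\mu_j|$, absorbing the $2\pi$ factors into the implied constant. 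Summing the error contributions over $S\subset\{1,\dots,R\}$ yields exactly the claimed error term $O(E(x;q)\sum_{S\subset[1,R]}x^{\#S}\prod_{j\in S}|\mu_j|)$.

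Finally, I would check that the main parts reassemble to $\rho(\b a,q)\int_{\prod[1,x_j]}\omega(\b t)\mathrm e(\langle\boldsymbol\mu,\b t\rangle)\,\d\b t$. This is simply Lemma~\ref{partsummation} run backwards: applying that identity to $\tilde f(\boldsymbol\nu)=\mathds 1(\nu_j\le x_j)$ replaced by the continuous analogue — more cleanly, it is the multivariable integration-by-parts identity $\int_{\prod[1,x_j]}\Phi(\b t)\prod_j g_j'(t_j)\,\d\b t$ summed against boundary terms equals $\prod_j g_j(x_j)\Phi(\b x)-\cdots$, which for $\Phi(\b t)=\int_{\prod[1,t_j]}\omega$ and $g_j=\mathrm e(\mu_j\cdot)$ collapses (since $\partial_{t_1}\cdots\partial_{t_R}\Phi=\omega$) to $\int_{\prod[1,x_j]}\omega(\b t)\prod_j\mathrm e(\mu_j t_j)\,\d\b t=\int\omega(\b t)\mathrm e(\langle\boldsymbol\mu,\b t\rangle)\,\d\b t$, the factors $\rho(\b a,q)$ being common to all terms. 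The only real bookkeeping obstacle is this last algebraic reassembly of the $2^R$ main terms; it is most safely handled either by invoking Lemma~\ref{partsummation} a second time (with the roles of discrete sum and integral interchanged) or by a one-line induction on $R$ parallel to the proof of Lemma~\ref{partsummation}, so no genuine difficulty arises.
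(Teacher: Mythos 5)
Your proposal matches the paper's proof essentially step for step: both apply Lemma~\ref{partsummation} with $f=k\cdot\mathds 1(\boldsymbol\nu\equiv\b a\bmod q)$ and $g_j$ the exponential $\mathrm e(\pm\mu_j\cdot)$, substitute $F(\b y)=\rho(\b a,q)\int_{\prod[1,y_j]}\omega+O(E(x;q))$ into each $T_S$, bound the error contribution by $E(x;q)\,x^{\#S}\prod_{j\in S}|\mu_j|$, and reassemble the main pieces via $\partial^R G/\partial x_1\cdots\partial x_R=\omega$ together with repeated integration by parts. The only cosmetic difference is a sign convention in the exponential, which does not affect the argument.
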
\begin{proof}
Take $f=k\cdot \mathds{1}_{\nu_i \equiv a_i \md q}$ and $g_i(y)=\mathrm e(-\mu_i y)$ in 
Lemma~\ref{partsummation}. Then 
$$
F(x_1,\dots,x_R)=\rho(\mathbf{a},q)G(x_1,\dots,x_R)+O(E(x;q))
,$$
where $G(x_1,\dots,x_R)=\int_{\mathbf{t} \in\prod_{i=1}^R [1,x_i]} \omega(\mathbf{t})\rm{d} \mathbf{t}.$ 
We have 
\begin{equation}
\frac{\partial^R G}{\partial x_1 \cdots \partial x_R}(x_1,\dots,x_R)=\omega(x_1,\dots,x_R).
\label{ddd}
\end{equation}
due to the fact that $\omega$ is $\mathcal{C}^1$ in Theorem~\ref{lem:vachms}.
We therefore obtain
$$ \sum_{\substack{ n_j \leqslant x, \, \forall j \in S \\
S \subset \{1,\dots,R\}}}
 (-1)^{\#S} \mathrm e\left( \sum_{j \not\in S}\mu_j x_j\right) T_S
$$
with
$$
T_S=\int_{\substack{ \b t = (t_j)_{j\in S} \\ \forall j \in S: t_j \in [1  , x] }}
 F((x\mathds 1 (j \notin S ) +t_j \mathds 1 (j \in S )) )  
\mathrm e \l(\sum_{j\in S} \mu_j t_j \r) 
\prod_{ j \in S} (2 \pi i \mu_j  ) 
\mathrm d \b t.
$$
Substituting, we get
$$
\rho(\mathbf{a},q)\sum_{\substack{
n_j \leqslant x, \, \forall j \in S 
\\ S \subset \{1,\dots,R\}}} (-1)^{\#S} 
\mathrm e\left( -\sum_{j \not\in S}\mu_j\nu_j\right) A_S+O\left(E(x;q)
\sum_{\substack{ n_j \leqslant x, \, \forall j \in S \\
S \subset \{1,\dots,R\}}} B_S \right),
$$
where
$$
A_S=\int_{\substack{ \b t = (t_j)_{j\in S} \\ \forall j \in S: t_j \in [1  , x] }}
 G((x\mathds 1 (j \notin S ) +t_j \mathds 1 (j \in S )) )  
\mathrm e \l(\sum_{j\in S} \mu_j t_j \r) 
\prod_{ j \in S} (2 \pi i \mu_j  ) 
\mathrm d \b t
$$
and $$
B_S=\int_{\substack{ \b t = (t_j)_{j\in S} \\ \forall j \in S: t_j \in [1 , x] }}
 \prod_{ j \in S}  (  |\mu_j  |)
\mathrm d \b t=x^{\#S}\prod_{ j \in S} |\mu_j  |.
$$ Using repeated partial summations together with \eqref{ddd}
concludes the proof.
\end{proof}
Define$$H= \int_{\R^R  } I(\c B;\boldsymbol{\gamma})
\left( \int_{\mathbf{t} \in [P^{-d},b]^R} \omega(\mathbf{t})
\mathrm e\left( -\langle 
\boldsymbol{\gamma},\mathbf{s}\mathbf{t}\rangle\right)\rm{d}
\mathbf{t}\right)\rm{d}\boldsymbol{\gamma}.$$

\begin{lemma}\label{lem:guitar}For all large~$P$ one has 
\begin{align*} \sum_{\substack{ \b t \in \Z^{n+1}\cap P\c B \\  s_j f_j(\b t ) > 0, \, \forall j }}  &
k(s_1f_1(\b t), \ldots,s_R f_R(\mathbf{t)})) 
- \frac{P^{n+1} H }{W_z^{n+1-R}}  \sum_{\b t \in (\Z/W_z )^{n+1} } \rho((s_1f_1(\b t), \ldots,s_R f_R(\mathbf{t)})),W_z) 
\\  \ll & P^{n+1-Rd}W_z^R
E(bP^d;W_z)+P^{n+1-Rd} 
\left( P^{-\delta} +  \widetilde{\epsilon}(z) 
+ z^{-c'}  
 \r) \|k\|_1 .\end{align*}\end{lemma}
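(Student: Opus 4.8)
The plan is to start from Lemma~\ref{lem:letsstart}, which already expresses the sum over $\b t \in \Z^{n+1}\cap P\c B$ as an explicit main term plus an error of size $O(P^{n+1-Rd}(P^{-\delta}+\widetilde\epsilon(z)+z^{-c'})\|k\|_1)$. The main term there is a sum over residues $\b t \in (\Z/W_z)^{n+1}$ of an integral over $\boldsymbol\gamma \in \R^R$ of $I(\c B;\boldsymbol\gamma)$ times a truncated sum over $\boldsymbol\nu \in \N\cap[1,bP^d]^R$ lying in a fixed progression modulo $W_z$. The first step is to apply Lemma~\ref{lem:partsum} to that inner sum over $\boldsymbol\nu$, with $q=W_z$, with $a_j$ the residue of $f_j(\b t)$ (up to sign), and with $\mu_j = -\gamma_j s_j P^{-d}$; this replaces the arithmetic sum by $\rho((s_jf_j(\b t)),W_z)$ times $\int_{\prod_j[1,bP^d]^R}\omega(\mathbf t)\mathrm e(\langle\boldsymbol\mu,\mathbf t\rangle)\mathrm d\mathbf t$, up to an error controlled by $E(bP^d;W_z)\sum_{S\subset[1,R]}(bP^d)^{\#S}\prod_{j\in S}|\mu_j|$.

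Next I would change variables in the $\boldsymbol\nu$-integral by $\mathbf t \mapsto P^d\mathbf t$, turning the integration range $[1,bP^d]^R$ into $[P^{-d},b]^R$ and producing a factor $P^{Rd}$; combined with the prefactor $P^{n+1-Rd}/W_z^{n+1-R}$ this yields exactly the $H$-integral in the definition preceding the lemma, namely $\int_{\R^R}I(\c B;\boldsymbol\gamma)\big(\int_{[P^{-d},b]^R}\omega(\mathbf t)\mathrm e(-\langle\boldsymbol\gamma,\mathbf s\mathbf t\rangle)\mathrm d\mathbf t\big)\mathrm d\boldsymbol\gamma$, multiplied by $\sum_{\b t\in(\Z/W_z)^{n+1}}\rho((s_jf_j(\b t)),W_z)$ and by $P^{n+1}/W_z^{n+1-R}$. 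So the main terms match on the nose, and it remains only to bound the contribution of the error term from Lemma~\ref{lem:partsum} after it is integrated against $I(\c B;\boldsymbol\gamma)$ over $\boldsymbol\gamma\in\R^R$ and summed over the $W_z^{n+1}$ residues $\b t$.

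The main obstacle is exactly this last bound: one must show that
$$
\frac{P^{n+1-Rd}}{W_z^{n+1-R}}\sum_{\b t\in(\Z/W_z)^{n+1}}\ \int_{\R^R}|I(\c B;\boldsymbol\gamma)|\, E(bP^d;W_z)\sum_{S\subset[1,R]}(bP^d)^{\#S}\prod_{j\in S}|\gamma_j s_j P^{-d}|\,\mathrm d\boldsymbol\gamma \ll P^{n+1-Rd}W_z^R E(bP^d;W_z).
$$
The residue sum contributes $W_z^{n+1}$, so after dividing by $W_z^{n+1-R}$ we get the stated factor $W_z^R$; the powers of $P$ cancel since $(bP^d)^{\#S}\cdot (P^{-d})^{\#S}=b^{\#S}$. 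What is genuinely needed is that $\int_{\R^R}|I(\c B;\boldsymbol\gamma)|\prod_{j\in S}|\gamma_j|\,\mathrm d\boldsymbol\gamma$ is finite, uniformly in $P$ and in the box $\c B$. This is where Definition~\ref{def:birch} and the Birch decay estimates enter: because $\mathfrak B>R(R+1)2^{d-1}(d-1)$, the oscillatory integral $I(\c B;\boldsymbol\gamma)$ over a unit-sized box decays like $|\boldsymbol\gamma|^{-\mathfrak B/(2^{d-1}(d-1))+\varepsilon}$ (this is the archimedean input in \cite[\S5]{MR0150129} that underlies the convergence of the singular integral), which beats $\prod_{j\in S}|\gamma_j|\le|\boldsymbol\gamma|^R$ with room to spare since $\mathfrak B/(2^{d-1}(d-1)) > R(R+1) \ge R+R$. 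One splits $\R^R$ into $|\boldsymbol\gamma|\le 1$, where $|I(\c B;\boldsymbol\gamma)|\le\vol(\c B)\le 1$ gives a trivially bounded contribution, and $|\boldsymbol\gamma|>1$, where the decay estimate makes the integral converge. Assembling these bounds and recalling $c'>0$ from \eqref{eq:blindleadingtheblind} finishes the proof; the argument is a direct $R$-variable analogue of the $R=1$ case in \cite[Lemma 2.3]{destagnolR=1}, so the details parallel to it may be kept brief.
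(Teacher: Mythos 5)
Your proposal follows the same route as the paper's proof: start from Lemma~\ref{lem:letsstart}, apply Lemma~\ref{lem:partsum} (with $q=W_z$, $a_j\equiv s_jf_j(\b t)\pmod{W_z}$, $\mu_j=-s_j\gamma_jP^{-d}$) inside the integral over $\boldsymbol\gamma$, identify the main term with $\frac{P^{n+1}H}{W_z^{n+1-R}}\sum_{\b t}\rho(\cdot,W_z)$ after rescaling, and then reduce the accumulated error to the finiteness of $\int_{\R^R}|I(\c B;\boldsymbol\gamma)|\sum_{S\subset\{1,\dots,R\}}\prod_{j\in S}|\gamma_j|\,\mathrm d\boldsymbol\gamma$, which is guaranteed by the Birch decay estimate for $I(\c B;\boldsymbol\gamma)$ under Definition~\ref{def:birch}. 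This is exactly the structure of the paper's proof, which cites Lemmas~\ref{lem:letsstart}--\ref{lem:partsum} and then reduces to the $R=1$ argument of \cite[Lemma 2.4]{destagnolR=1}.

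One small imprecision worth flagging: the decay rate you quote, $|I(\c B;\boldsymbol\gamma)|\ll|\boldsymbol\gamma|^{-\mathfrak B/(2^{d-1}(d-1))+\varepsilon}$, omits the $1/R$ built into Birch's bound; the paper records it as $|I(\c B;\boldsymbol\gamma)|\ll\min\bigl(1,(\max_i|\gamma_i|)^{-c'-(R+1)}\bigr)$, where $c'$ in~\eqref{eq:blindleadingtheblind} already carries a factor $\tfrac1{R(d-1)}$ reflecting the $R$ phases $\gamma_if_i$. Your overestimated exponent still lands on the correct conclusion, but it is worth tracking the precise exponent, since the margin beating $\prod_{j\in S}|\gamma_j|$ is governed by $c'>0$ rather than by the much larger quantity $\mathfrak B/(2^{d-1}(d-1))-2R$.
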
 
\begin{proof}
Applying Lemmas~\ref{lem:letsstart}-\ref{lem:partsum}
together  with 
$
|I(\c B;\boldsymbol{\gamma})|\ll \min(1,\max(|\gamma_i|))^{-c'-(R+1)}
$ that comes from~\cite[Equation (11), page 255]{MR0150129}
we infer that 
$$
\int_{\R^R  } |I(\c B;\boldsymbol{\gamma})|\left(\sum_{ S \subset \{1,\dots,R\}}\prod_{ j \in S} |\gamma_j  |\right)\rm{d}\boldsymbol{\gamma}<+\infty.
$$ The remaining of the proof 
can be conducted as in~\cite[Lemma 2.4]{destagnolR=1}.
\end{proof}
The proof of Theorem~\ref{lem:vachms}
can be concluded by using 
\textit{mutatis mutandis} the Fourier analysis approach used in the $R=1$ case in~\cite[Lemmas 2.5-2.8]{destagnolR=1}.\end{proof}

\section{Equidistribution       and  other 
analytic lemmas}
\label{s:sieglwalfsz}

In \S\ref{RV108} we prove that the arithmetic function 
that is   relevant to quadratic
norms is equidistributed in arithmetic progressions.
While this can be done for completely general arithmetic progressions, 
only special progressions are needed in the 
 application, namely,  we need only consider 
   progressions that are nearly coprime to
   the modulus, see~\eqref{eq:padrona}. This allows us to write the average of the multivariable
arithmetic function as a product of averages of arithmetic functions
  in a single variable and then apply   results from the literature regarding the latter sums.

At last, 
in \S\ref{s:arcturus is a most fucking awsome band}, 
we do establish certain bounds via the large sieve,
which, though requisite for the arguments of 
\S\ref{s:proofproofproof}, would cast an unwonted shadow 
upon the symmetry of exposition were they to intrude 
therein.
 
\subsection{Equidistribution}
\label{RV108}
To estimate $N(B)$ we shall   use Theorem~\ref{lem:vachms} in \S\ref{s:proofproofproof}.
This requires a result 
on equidistribution in arithmetic progressions
that we prove in this section. Recall that $D$ is a non-zero 
square-free integer other than $1$.  The next result is
from~\cite[Ch.III,Th.1]{serrecourse}.
\begin{lemma}\label{lem:hilbser}
For a prime $p$ and integers 
$\alpha,\beta,u,v$   with $p\nmid uv$ 
we have\[ 
(p^\alpha u, p^\beta v)_{\Q_p}=
\begin{cases}  
(-1)^{\frac{(u-1)(v-1)}{4}+\frac{\alpha (v^2-1) }{8}  +\frac{\beta (u^2-1)}{8}   }                        ,&\mbox{if } p=2, \\
(-1)^{\frac{ \alpha \beta (p-1)}{2}} \big(\frac{v}{p} \big)^\alpha   \big(\frac{u}{p} \big)^\beta    ,&\mbox{if } p\neq 2,  \end{cases}\]
where $\left(\frac{\cdot }{p}\right)$ is the Legendre quadratic symbol. \end{lemma}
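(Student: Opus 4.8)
The plan is to derive Lemma~\ref{lem:hilbser} from the structural properties of the Hilbert symbol: the pairing $(\cdot,\cdot)_{\Q_p}\colon\Q_p^\times\times\Q_p^\times\to\{\pm1\}$ is symmetric, bimultiplicative, and trivial as soon as one of its arguments is a square, so it factors through a pairing on $\Q_p^\times/(\Q_p^\times)^2$. First I would check that the proposed right-hand side has exactly these formal properties, and then I would verify the asserted identity on a generating set of square classes, which by bimultiplicativity of both sides suffices.

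Checking bimultiplicativity of the right-hand side is immediate for odd $p$ (multiplicativity of the Legendre symbol, together with the fact that $p^\alpha u\mapsto\alpha\bmod 2$ and $p^\alpha u\mapsto$ the class of $u$ in $\Z_p^\times/(\Z_p^\times)^2$ are homomorphisms). For $p=2$ one needs the two maps $\epsilon\colon u\mapsto\tfrac{u-1}{2}\bmod 2$ and $\omega\colon u\mapsto\tfrac{u^2-1}{8}\bmod 2$ to be group homomorphisms $\Z_2^\times\to\Z/2\Z$; this reduces to the elementary congruences $(u-1)(v-1)\equiv0\pmod{4}$ and $(u^2-1)(v^2-1)\equiv0\pmod{64}$ for odd $u,v$, which give $\epsilon(uv)\equiv\epsilon(u)+\epsilon(v)$ and $\omega(uv)\equiv\omega(u)+\omega(v)$, so that the exponent $\tfrac{(u-1)(v-1)}{4}+\alpha\omega(v)+\beta\omega(u)$ is bi-additive modulo $2$ in $(2^\alpha u,2^\beta v)$. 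Since only the parities of $\alpha,\beta$ matter, it is then enough to treat $\alpha,\beta\in\{0,1\}$ and to check the identity only on pairs of generators of $\Q_p^\times/(\Q_p^\times)^2$.

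For the generator computations I would proceed as follows. When $p$ is odd, take the generators $p$ and a non-residue unit $u_0$. Then $(u,v)_{\Q_p}=1$ for units $u,v$, since the conic $x_0^2-ux_1^2-vx_2^2=0$ already has a (smooth, hence liftable by Hensel) $\F_p$-point because every nondegenerate conic over a finite field is isotropic; $(p,u)_{\Q_p}=\left(\frac{u}{p}\right)$, since a unit $u$ is a norm from $\Q_p(\sqrt p)$ iff $u$ is a square mod $p$; and $(p,p)_{\Q_p}=(p,-p)_{\Q_p}(p,-1)_{\Q_p}=(p,-1)_{\Q_p}=\left(\frac{-1}{p}\right)=(-1)^{(p-1)/2}$ using $(x,-x)_{\Q_p}=1$. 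Expanding $(p^\alpha u,p^\beta v)_{\Q_p}=(p,p)_{\Q_p}^{\alpha\beta}(p,v)_{\Q_p}^\alpha(u,p)_{\Q_p}^\beta(u,v)_{\Q_p}$ reproduces the stated formula for $p\neq2$. When $p=2$, the group $\Q_2^\times/(\Q_2^\times)^2$ is generated by $-1$, $5$ and $2$, and I would compute the six symbols $(-1,-1)_{\Q_2}=-1$, $(5,2)_{\Q_2}=-1$, and $(-1,5)_{\Q_2}=(-1,2)_{\Q_2}=(5,5)_{\Q_2}=(2,2)_{\Q_2}=1$: the trivial values come from the integer points $1^2+2^2=5\cdot1^2$ and $1^2+1^2=2\cdot1^2$ (the latter also giving $(2,-1)_{\Q_2}=1$, whence $(2,2)_{\Q_2}=(2,-2)_{\Q_2}(2,-1)_{\Q_2}=1$), together with $(5,5)_{\Q_2}=(5,-1)_{\Q_2}=(-1,5)_{\Q_2}=1$ via $(5,-5)_{\Q_2}=1$; the value $-1$ for $(-1,-1)$ holds because $x^2+y^2+z^2\equiv0\pmod{8}$ has no primitive solution, and $(5,2)_{\Q_2}=-1$ because $\Q_2(\sqrt5)$ is the unramified quadratic extension, in which $2$ is not a norm as it has odd valuation. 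Reading off $\epsilon(-1)=1$, $\omega(-1)=0$, $\epsilon(5)=0$, $\omega(5)=1$ shows that the right-hand side of the claimed formula takes precisely these values on the six pairs, so bimultiplicativity on both sides yields the lemma.

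I expect the only delicate point to be the case $p=2$: the $2$-adic square-class group is larger than in the odd case (its unit part is $(\Z/2\Z)^2$, detected only modulo $8$), so one must genuinely verify that $\epsilon$ and $\omega$ are homomorphisms, and the vanishing assertions such as $(-1,-1)_{\Q_2}=-1$ require ruling out solutions modulo $8$ rather than modulo $2$; for odd $p$ everything is forced by the single computation $(p,p)_{\Q_p}=(-1)^{(p-1)/2}$ and the isotropy of conics over $\F_p$. Since the statement is exactly~\cite[Ch.~III, Th.~1]{serrecourse}, one may alternatively just invoke that reference; the sketch above is the shortest self-contained route.
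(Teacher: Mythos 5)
Your proposal is correct, but it goes beyond what the paper does: the paper gives no proof at all for this lemma, simply citing it as Serre's Theorem~1 in Chapter~III of \emph{A Course in Arithmetic}, a reference you also note at the end of your sketch. What you have written is essentially Serre's own argument (reduce by bimultiplicativity to the square classes, isotropy of conics over $\F_p$ for odd $p$ to kill the unit--unit symbol, and an explicit check on the generators $-1,5,2$ of $\Q_2^\times/(\Q_2^\times)^2$). I spot-checked the delicate parts and they are all fine: the homomorphism property of $u\mapsto\tfrac{u-1}{2}$ and $u\mapsto\tfrac{u^2-1}{8}$ mod $2$ does follow from $(u-1)(v-1)\equiv0\pmod4$ and $(u^2-1)(v^2-1)\equiv0\pmod{16}$ for odd $u,v$ (your $\pmod{64}$ is true but stronger than needed); the six $2$-adic generator values $(-1,-1)=-1$, $(5,2)=-1$, and $(-1,5)=(-1,2)=(5,5)=(2,2)=1$ match the formula's exponent, as does $(p,p)_{\Q_p}=(-1)^{(p-1)/2}$ obtained from $(p,-p)=1$. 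Since the paper treats this as a black-box citation, a self-contained proof is not required, but yours would serve as one if it were.
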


As usual we let $v_p$ denote the standard $p$-adic valuation.
The next two lemmas are   applications of Lemma~\ref{lem:hilbser}.
\begin{lemma}
\label{lem:detectorslemma}
For $a\in \Z\setminus \{0\}$  
the conic $x_0^2-D x_1^2=a x_2^2$ has a $\Q$-point 
if and only if 
all of the following conditions hold,
\begin{enumerate}
\item $D<0\Rightarrow a >0$;
\item  $p\nmid 2D \ \mbox{and} \ \left(\frac{D}{p}\r)=-1 \Rightarrow 2\mid v_p(a)$;
\item  $p\mid D \ \mbox{and} \ p\neq 2  \ \mbox{and} \  2\mid v_p(a) 
\Rightarrow \left(\frac{ap^{-v_p(a) } }{p}\r)=1$;
\item  $p\mid D \ \mbox{and} \ p\neq 2  \ \mbox{and} \  2\nmid v_p(a)  
\Rightarrow \left(\frac{ap^{-v_p(a) } }{p}\r)=\left(\frac{-D/p}{p}\right)$;
\item  the conic $x_0^2-D x_1^2=a x_2^2$ has a point in $\Q_2$.
\end{enumerate}
\end{lemma}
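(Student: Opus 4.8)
The plan is to translate the existence of a $\Q$-rational point on the conic $C\colon x_0^2 - D x_1^2 = a x_2^2$ into local conditions via the Hasse--Minkowski theorem, and then to make each local condition explicit using Lemma~\ref{lem:hilbser}. Recall that $C$ has a $\Q$-point if and only if the quadratic form $x_0^2 - D x_1^2 - a x_2^2$ is isotropic over every completion $\Q_v$, i.e.\ $(D,a)_{\Q_v}=1$ for all places $v\in\Omega_\Q$ (including $v=\infty$). Since $C$ always has points over $\Q_p$ for all but finitely many $p$ (those not dividing $2Da$), the product over all $v$ is really a finite check. The real place gives condition~(1): $x_0^2 - D x_1^2 - a x_2^2$ is isotropic over $\R$ unless $D<0$ and $a<0$, which is exactly the contrapositive of~(1). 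Condition~(5) is simply the statement at $p=2$ left in unexpanded form, since the $2$-adic formula in Lemma~\ref{lem:hilbser} is cumbersome and it is cleaner to leave it. So the real content is to show that for odd primes $p$, the condition $(D,a)_{\Q_p}=1$ is equivalent to the conjunction of~(2),~(3),~(4) as $p$ ranges over odd primes.

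First I would dispose of the odd primes $p \nmid 2D$. Here $p$ may or may not divide $a$; write $a = p^{v_p(a)} u$ with $p\nmid u$ and $v_p(D)=0$. By Lemma~\ref{lem:hilbser} with $\alpha = v_p(a)$, $\beta = 0$ (applied to $(a,D)_{\Q_p}=(D,a)_{\Q_p}$, using symmetry of the Hilbert symbol), we get $(D,a)_{\Q_p} = \big(\tfrac{D}{p}\big)^{v_p(a)}$. This equals $1$ automatically when $\big(\tfrac{D}{p}\big)=1$, and when $\big(\tfrac{D}{p}\big)=-1$ it equals $1$ iff $v_p(a)$ is even. Since $\big(\tfrac{D}{p}\big)=0$ is excluded by $p\nmid D$, this is precisely condition~(2) (with the convention that~(2) imposes nothing when $\big(\tfrac Dp\big)=1$).

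Next I would handle the odd primes $p\mid D$. Write $D = p D'$ with $p\nmid D'$ (possible since $D$ is square-free), and $a = p^{v_p(a)} u$ with $p\nmid u$. Apply Lemma~\ref{lem:hilbser} with $\alpha = 1$ (the exponent of $p$ in $D$) and $\beta = v_p(a)$: $(D,a)_{\Q_p} = (-1)^{v_p(a)(p-1)/2}\big(\tfrac{u}{p}\big)\big(\tfrac{D'}{p}\big)^{v_p(a)}$. When $v_p(a)$ is even, the first factor is $1$ and $\big(\tfrac{D'}{p}\big)^{v_p(a)}=1$, so $(D,a)_{\Q_p}=1$ iff $\big(\tfrac{u}{p}\big)=1$, which is condition~(3) since $u = a p^{-v_p(a)}$. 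When $v_p(a)$ is odd, $(-1)^{v_p(a)(p-1)/2} = (-1)^{(p-1)/2} = \big(\tfrac{-1}{p}\big)$ and $\big(\tfrac{D'}{p}\big)^{v_p(a)} = \big(\tfrac{D'}{p}\big)$, so $(D,a)_{\Q_p}=1$ iff $\big(\tfrac{u}{p}\big) = \big(\tfrac{-1}{p}\big)\big(\tfrac{D'}{p}\big)^{-1} = \big(\tfrac{-D'}{p}\big) = \big(\tfrac{-D/p}{p}\big)$, which is condition~(4). Assembling: $C$ has a $\Q$-point $\iff (D,a)_{\Q_v}=1$ for all $v$ $\iff$ the real condition~(1), the $2$-adic condition~(5), and for every odd prime the relevant one of~(2),~(3),~(4) all hold. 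I do not expect a genuine obstacle here; the only thing to be careful about is bookkeeping of the exponents $\alpha,\beta$ in Lemma~\ref{lem:hilbser} and the square-free hypothesis on $D$ (which guarantees $v_p(D)\in\{0,1\}$, so that the cases $p\nmid D$ and $p\mid D$ exhaust all odd primes with $\alpha\in\{0,1\}$), plus stating clearly the convention that conditions~(2)--(4) are vacuous precisely when their hypotheses fail.
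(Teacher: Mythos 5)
Your proposal is correct and follows exactly the route the paper intends: the paper gives no explicit proof, only the remark that Lemmas~\ref{lem:detectorslemma} and~\ref{lem:detectorslemma2} are ``applications of Lemma~\ref{lem:hilbser},'' and your argument (Hasse--Minkowski to reduce to the local Hilbert symbols $(D,a)_{\Q_v}$, the real place giving~(1), the place~$2$ left as~(5), and the odd primes handled by the explicit formula in Lemma~\ref{lem:hilbser} with $\alpha=v_p(D)\in\{0,1\}$ thanks to $D$ square-free) is precisely that filling-in. The computations at $p\nmid 2D$ and at odd $p\mid D$ (splitting on the parity of $v_p(a)$ and using $(-1)^{(p-1)/2}=\bigl(\tfrac{-1}{p}\bigr)$ together with $\bigl(\tfrac{D'}{p}\bigr)^{-1}=\bigl(\tfrac{D'}{p}\bigr)$) are correct.
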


\begin{lemma}
\label{lem:detectorslemma2}
Let $a\in \Z\setminus \{0\}$ be square-free.
Then the conic $x_0^2-D x_1^2=a x_2^2$ has a point in $\Q_2$
if and only if   
\begin{itemize} 
\item  $D\equiv 3 \md{8}    \Rightarrow a\equiv 1,5 \text{ or } 6 \md{8}$;
\item  $D\equiv 5 \md{8}    \Rightarrow a\equiv 1\md{2}$;
\item  $D\equiv 7 \md{8} \Rightarrow a\equiv 1 \text{ or } 2 \md{4}$;
\item  $D\equiv 2 \md{16}    \Rightarrow a\equiv   1,2,7,9,14   \text{ or } 15 \md{16}$;
\item  $D\equiv 6 \md{16}    \Rightarrow a\equiv   1,3,9,10,11 \text{ or } 14 \md{16}$;
\item  $D\equiv 10 \md{16}    \Rightarrow a\equiv 1,6,7,9,10   \text{ or } 15 \md{16}$;
\item  $D\equiv 14 \md{16}    \Rightarrow a\equiv 1,2,3,6,9     \text{ or } 11 \md{16}$.
\end{itemize}
\end{lemma}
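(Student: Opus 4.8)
The plan is to reduce everything to Lemma~\ref{lem:hilbser} by analysing the $2$-adic Hilbert symbol $(D,a)_{\Q_2}$. Since $a$ is square-free, write $a = 2^{\varepsilon} a'$ with $\varepsilon \in \{0,1\}$ and $a'$ odd; likewise write $D = 2^{\delta} D'$ with $\delta \in \{0,1\}$ and $D'$ odd. The conic $x_0^2 - D x_1^2 = a x_2^2$ has a $\Q_2$-point if and only if $(D,a)_{\Q_2} = 1$. Applying Lemma~\ref{lem:hilbser} with $p = 2$ expresses $(D,a)_{\Q_2}$ as $(-1)$ raised to an exponent built from $(D'-1)(a'-1)/4$, $\delta(a'^2-1)/8$ and $\varepsilon(D'^2-1)/8$, so the symbol depends only on $a' \bmod 8$ and $\varepsilon$, i.e.\ on $a \bmod 16$ (and, when $\varepsilon = 0$, only on $a \bmod 8$). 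Thus the whole statement is a finite check.

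Next I would organise the computation by the residue of $D$ modulo $16$ (respectively modulo $8$ when $D$ is odd), which determines $(\delta, D' \bmod 8)$. For each such class of $D$ one evaluates the exponent in Lemma~\ref{lem:hilbser} as $a$ ranges over the relevant residues: when $D$ is odd ($\delta = 0$) the symbol is $(-1)^{(D'-1)(a'-1)/4 + \varepsilon(D'^2-1)/8}$, which depends on $a \bmod 8$; when $D \equiv 2, 6, 10, 14 \bmod{16}$ ($\delta = 1$) there is the extra term $\delta(a'^2-1)/8$, forcing a computation modulo $16$. In each case one simply lists the residues $a$ for which the exponent is even; these are exactly the residues displayed in the statement. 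For instance, for $D \equiv 5 \bmod 8$ one has $D' \equiv 5$, $(D'-1)/2$ even, $(D'^2-1)/8 = 3$ odd, so the exponent is $\varepsilon \cdot 3 \bmod 2$, which vanishes precisely when $\varepsilon = 0$, i.e.\ $a$ odd; this matches the stated condition $a \equiv 1 \bmod 2$.

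One subtlety worth handling cleanly is the role of square-freeness: it guarantees $v_2(a) \le 1$, so that $\varepsilon \in \{0,1\}$ and no higher powers of $2$ intervene; without it the symbol would depend on $v_2(a) \bmod 2$ as well, and the residue classes would be stated differently. I would also note that since the Hilbert symbol is unchanged under multiplying $a$ by a square of a $2$-adic unit, one may always reduce $a'$ to its class modulo $8$, which is what makes the case list finite. The computations are entirely mechanical table look-ups in the formula of Lemma~\ref{lem:hilbser}, so no real obstacle arises; the only thing to be careful about is bookkeeping — getting the parity of each of the three contributions right in every one of the seven cases for $D$, and double-checking the borderline $\delta = 1$ cases where the $(a'^2-1)/8$ term changes the answer between $a \equiv 1$ and $a \equiv 7 \bmod 8$ (and similarly $3$ versus $5$). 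Carrying out all seven verifications yields precisely the stated equivalences, completing the proof.
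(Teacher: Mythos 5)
Your approach is the same as the paper's: the paper introduces Lemma~\ref{lem:hilbser} precisely so that both Lemmas~\ref{lem:detectorslemma} and~\ref{lem:detectorslemma2} follow by evaluating $(D,a)_{\Q_2}$, and your decomposition $a=2^{\varepsilon}a'$, $D=2^{\delta}D'$, followed by a residue-class check, is exactly the intended calculation. The reduction to $a'\bmod 8$, the observation that square-freeness forces $\varepsilon\in\{0,1\}$, and the case split by $D\bmod 8$ or $\bmod 16$ are all set up correctly.

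The gap is that you never actually carry out the seven verifications; you assert that the exponent parities ``are entirely mechanical table look-ups'' that ``yield precisely the stated equivalences,'' but this is exactly where the work lies, and in fact the table does \emph{not} match the statement in one case. For $D\equiv 7\bmod 8$ one has $\delta=0$, $\tfrac{D'-1}{2}$ odd, $\tfrac{D'^2-1}{8}$ even, so the exponent reduces to $\tfrac{a'-1}{2}\bmod 2$, giving $(D,a)_{\Q_2}=1$ iff $a'\equiv 1\bmod 4$. For $\varepsilon=0$ this is $a\equiv 1\bmod 4$; for $\varepsilon=1$ it is $a\equiv 2\bmod 8$ (i.e.\ $a\equiv 2,10\bmod{16}$), and $a\equiv 6\bmod 8$ is \emph{excluded}. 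Concretely, $(7,6)_{\Q_2}$ has exponent $\epsilon(7)\epsilon(3)+0\cdot\omega(3)+1\cdot\omega(7)=1+0+0=1$, so $(7,6)_{\Q_2}=-1$ and $x_0^2-7x_1^2=6x_2^2$ has no $\Q_2$-point, even though $6\equiv 2\bmod 4$. Thus the displayed condition ``$a\equiv 1$ or $2\bmod 4$'' is too weak; it should read ``$a\equiv 1\bmod 4$ or $a\equiv 2\bmod 8$'' (equivalently $a\equiv 1,2,5\bmod 8$). The remaining six cases do check out. You should actually perform the parity bookkeeping rather than deferring it, since that is the only way to catch this discrepancy.
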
  

\begin{remark}\label{remIHP}
In particular, knowing 
$v_2(a)\md 2$ and $a2^{-v_2(a)} \md{16}$
fully determines $(D,a)_2$.
For odd primes $p$
knowing $v_p(a)\md 2$ and    $ap^{-v_p(a)} \md{p}$
fully determines $(D,a)_p$.
\end{remark}

Let $z>1$ be an integer and let  
$W_z\in \mathbb N$ be a square-full integer given by  
$$ W_z=\prod_{p\leq z}p^{z}.$$ 
  Assume that $a_1,\ldots, a_R\in (\Z/W_z\Z)^R$ satisfy 
\beq{eq:padrona}{v_p(a_i)<
2 \left[
\frac{z-3}{2R}
 \right], \, \forall p\leq z, \forall i\in \{1,\ldots,R\}.}
Let $x_1,\ldots,x_R\geq 1 $. The next lemma 
shows that for sufficiently general arithmetic progressions,
the sum over $\N^{R}$ can be written as a product of 
$R$ independent 
sums. 
This will make easier the application 
of results in analytic number theory 
on averages of arithmetic functions.
\begin{lemma}    \label{lem:splitting} 
For each $s\in \{1,-1\}$ the quantity  
$$
\# \left\{ 
\mathbf{m} \in \N^R
:  
\begin{array}{l}
m_i\leq x_i, \b m\equiv \b a \md {W_z} ,
(sm_1\cdots m_R, D)_\R=1, \\
(sm_1\cdots m_R, D)_{\Q_p}=1 \ \forall p\leq z, \\
2\mid v_p( m_i)  \ \forall p>z 
 \textrm{ with } (\frac{D}{p})=-1
 \textrm{ and } \forall i 
 \end{array}\right\}$$
 
equals 
$$ 
\mathds 1((D,s)_\R=1 \, \mbox{and} \,(D,sa_1\cdots a_R)_{\Q_p}=1,\,\forall p\leq z)
\prod_{i=1}^R F_i\l(\frac{x_i}{\gcd(a_i,W_z)}\r),$$ 
where   $F_i(T)$   is defined through 
$$\#\left\{
 1\leq c  \leq T: 
c \equiv \frac{a_i}{\gcd(a_i,W_z)} \md{\frac{W_z}{\gcd(a_i,W_z)}}, 
2\mid v_p( c)  \ \forall p>z 
 \textrm{ with } \l(\frac{D}{p}\r)=-1
\right\}.$$
\end{lemma}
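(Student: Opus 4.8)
The plan is to split the congruence condition $\b m \equiv \b a \md{W_z}$ coordinate by coordinate, and to show that once this is done the global Hilbert-symbol condition at primes $p \leq z$ decouples from the sum and becomes a single indicator, while the remaining local conditions at primes $p > z$ are intrinsic to each coordinate $m_i$ separately. First I would use the Chinese-remainder-type reasoning: writing $g_i := \gcd(a_i, W_z)$, the condition $m_i \equiv a_i \md{W_z}$ forces $g_i \mid m_i$, so we write $m_i = g_i c_i$ with $c_i \equiv a_i/g_i \md{W_z/g_i}$ and $1 \le c_i \le x_i/g_i$; here $\gcd(a_i/g_i, W_z/g_i) = 1$. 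The key observation, to be extracted from Remark~\ref{remIHP} and Lemma~\ref{lem:hilbser}, is that for $p \leq z$ the Hilbert symbol $(D, s m_1 \cdots m_R)_{\Q_p}$ depends only on $v_p(m_i) \bmod 2$ and $m_i p^{-v_p(m_i)} \bmod p$ (or $\bmod\,16$ when $p = 2$) for each $i$. Because $W_z = \prod_{p \le z} p^z$ and the hypothesis~\eqref{eq:padrona} guarantees $v_p(a_i) < z$ with $v_p(a_i)$ well below the exponent $z$, we have $v_p(m_i) = v_p(a_i)$ and $m_i p^{-v_p(m_i)} \equiv a_i p^{-v_p(a_i)} \md{p}$ (resp. $\md{16}$) for all $p \leq z$; thus the condition $(D, s m_1 \cdots m_R)_{\Q_p} = 1$ depends only on $\b a$, not on the free variables $c_i$, and likewise for the archimedean condition since $\mathrm{sign}(s m_1 \cdots m_R) = \mathrm{sign}(s)$ once all $m_i > 0$. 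This produces the indicator factor $\mathds 1((D,s)_\R = 1 \text{ and } (D, s a_1 \cdots a_R)_{\Q_p} = 1 \ \forall p \le z)$.

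Next, having pulled out this indicator, the count reduces to $\#\{\b c \in \N^R : c_i \le x_i/g_i,\ c_i \equiv a_i/g_i \md{W_z/g_i},\ 2 \mid v_p(c_i)\ \forall p > z \text{ with } (\tfrac Dp) = -1,\ \forall i\}$. Here I need to check that the condition $2 \mid v_p(m_i)$ for $p > z$ is equivalent to $2 \mid v_p(c_i)$: this is immediate because $g_i \mid W_z$ is supported on primes $p \le z$, so $v_p(m_i) = v_p(c_i)$ for all $p > z$. At this point the conditions on the different coordinates $c_i$ are completely independent — each $c_i$ ranges over a fixed arithmetic progression modulo $W_z/g_i$ intersected with a set defined purely in terms of the valuations of $c_i$ at primes $p > z$ — so the count factors as $\prod_{i=1}^R F_i(x_i/g_i)$ with $F_i$ exactly as defined in the statement.

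The main obstacle, and the step requiring genuine care rather than bookkeeping, is the verification that hypothesis~\eqref{eq:padrona} is precisely strong enough to make the local symbols at $p \le z$ constant on the progression. One must track that $v_p(a_i) < 2\lfloor (z-3)/2R \rfloor$ ensures not only $v_p(m_i) = v_p(a_i)$ (which merely needs $v_p(a_i) < z$) but also that the unit part $a_i p^{-v_p(a_i)}$ is determined modulo $p$ (or $16$) by $a_i \bmod W_z$ — i.e.\ that $v_p(a_i) + 1 \le z$ (resp.\ $v_p(a_i) + 4 \le z$) — and, more subtly, that in forming the product $s m_1 \cdots m_R$ the total valuation $\sum_i v_p(m_i)$ and the product of unit parts are still controlled; the factor $R$ and the constant $3$ in the bound are there exactly to absorb the worst case over all $R$ coordinates and the $2$-adic modulus $16 = 2^4$. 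I would handle this by a short lemma-internal computation invoking Lemma~\ref{lem:hilbser} and Remark~\ref{remIHP} directly, checking the two cases $p = 2$ and $p$ odd separately, and then the rest of the argument is the elementary factorisation described above.
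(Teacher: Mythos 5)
Your proposal is correct and follows essentially the same route as the paper: extract $g_i = \gcd(a_i, W_z)$ from $m_i$ via the congruence and~\eqref{eq:padrona}, show the Hilbert-symbol conditions at $p \le z$ are constant on the progression using Remark~\ref{remIHP} and the controlled valuation/unit-part of the product, and then observe the remaining count factors as $\prod_i F_i(x_i/g_i)$. You also correctly identify the delicate point — that one needs the bound on $\sum_i v_p(a_i)$ (not just each $v_p(a_i)$) to leave enough slack in $v_p(W_z) = z$ to fix the unit part of the product modulo $p$ (resp.\ $2^4$), which is precisely what the paper's estimate $v_p(\cdots) \leq v_p(a_1\cdots a_R) < 2R\lfloor(z-3)/(2R)\rfloor \leq z-3$ accomplishes.
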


\begin{remark} 
One can interpret this result as partitioning the counting function using a torsor and its twists. We will see that in our application we get a contribution from infinitely many twists.
\end{remark}

\begin{proof} 
Factor $m_i=c_id_i$, where   
$c_i,d_i \in \N$ are such that 
$\gcd(c_i,W_z)=1$ and  
all     prime factors of $d_i$ are in $[2,z]$.
We get 
 $c_i d_i=m_i \equiv a_i\md{W_z}$ so that 
 $d_i=\gcd(a_i,W_z)$
 due to $v_p(a_i)<z$ that comes from~\eqref{eq:padrona}.
We obtain the expression 
$$ \mathds 1((s, D)_\R=1)
\# \left\{ 
\mathbf{c} \in \N^{n+1}
:  
\begin{array}{l}
c_i\leq x_i/\gcd(a_i,W_z), 
c_i \equiv a_i/\gcd(a_i,W_z) \md{W_z/\gcd(a_i,W_z)},
\\ 
(s \gcd(a_1,W_z) \cdots \gcd(a_R,W_z) 
c_1 \cdots c_R, D)_{\Q_p}=1 \ \forall p\leq z, \\
2\mid v_p( c_i)  \ \forall p 
 \textrm{ with } (\frac{D}{p})=-1
 \textrm{ and } \forall i 
 \end{array}\right\}
.$$  
We omitted recording   
that $c_i$ is composed  only of primes $p>z$ as 
this is implied by~\eqref{eq:padrona} and 
$c_i \equiv a_i/\gcd(a_i,W_z) \md{W_z/\gcd(a_i,W_z)} $.
For primes $p\leq z$ 
 note that  $$v_p(
 s \gcd(a_1,W_z) \cdots \gcd(a_R,W_z) 
c_1 \cdots c_R
)\leq 
v_p( a_1\cdots a_R)
< 
2R \left[
\frac{z-3}{2R}
 \right]\leq z-3 =v_p(W_z)-3,$$
 hence,  \beq{eq:viv}{v_p(s \gcd(a_1,W_z) \cdots \gcd(a_R,W_z) 
c_1 \cdots c_R)=
v_p(  sa_1\cdots a_R).}
In addition, putting together 
$v_p(a_i) < v_p(W_z)$
and 
$c_i\equiv a_i/\gcd(a_i,W_z) \md{W_z/\gcd(a_i,W_z)}$
shows that  
$c_i\equiv a_i/\gcd(a_i,W_z) \md{p}$, thus, 
\beq{eq:viv2}{
\frac{ s \gcd(a_1,W_z) \cdots \gcd(a_R,W_z) 
c_1 \cdots c_R}{\displaystyle p^{v_p( s \gcd(a_1,W_z) \cdots \gcd(a_R,W_z) 
c_1 \cdots c_R)}}
\equiv 
\frac{sa_1 \cdots a_R }
{\displaystyle p^{v_p(\gcd(sa_1  \cdots   a_R,W_z))}} \md{p}.
} 
Thus, 
using~\eqref{eq:viv} and \eqref{eq:viv2}
together with   Lemma~\ref{lem:detectorslemma} and  Remark~\ref{remIHP}
leads us to 
  $$(D, s m_1 \cdots m_R)_{\Q_p}=
  (D, s a_1\ldots a_R)_{\Q_p}$$ for all      
  $2<p\leq z$.
  We have shown 
$v_2(
 s \gcd(a_1,W_z) \cdots \gcd(a_R,W_z) 
c_1 \cdots c_R
)\leq  v_2(W_z)-4$, hence, 
$$
\frac{ s \gcd(a_1,W_z) \cdots \gcd(a_R,W_z) 
c_1 \cdots c_R}{\displaystyle2^{v_2( s \gcd(a_1,W_z) \cdots \gcd(a_R,W_z) 
c_1 \cdots c_R)}}
\equiv 
\frac{sa_1 \cdots a_R }
{\displaystyle2^{v_2(\gcd(sa_1  \cdots   a_R,W_z))}} \md{2^4}.$$
Thus, 
 Remark~\ref{remIHP}
shows that 
 $(D, s m_1 \cdots m_R)_{\Q_2}=
  (D, s a_1\cdots a_R)_{\Q_2}$. 
\end{proof}
 For any Dirichlet character $\chi $
 we define the multiplicative function
 $$ f_\chi (n)=\chi(n) 
\mathds 1\l(2\mid v_p( n)  \ \forall p >z
 \textrm{ with } \l(\frac{D}{p}\r)=-1\r).$$
 \begin{lemma} \label{lem:koukou}
 For all $z>200$ and $T$ with $W_z\leq (\log T)^{25}$
 we have 
 $$F_i(T)=
\frac{\gamma_0 T}{(\pi \log T)^{1/2} }
\l(1+\l(\frac{D}{
a_i/\gcd(a_i,W_z) 
 }\r)\r)
\frac{\gcd(a_i,W_z) }{W_z} \prod_{p\leq z} \l(1-\frac{1}{p}\r)^{-1/2} 
+O\l (\frac{T}{\log T}\r ),$$ where
 the implied constant depends at most on $D$ and we define 
  $$ \gamma_0 =  \prod_{p> z}    \l(1-\frac{1}{p} \r)^{1/2}
\times \begin{cases}  
\l(1-  p^{-1} \r)^{-1}, & \textrm{ if } \, \, (\frac{D}{p})=1, \\
\l(1- p^{-2}  \r)^{-1}, & \textrm{ if }\, \, (\frac{D}{p})=-1.
\end{cases}$$ 
 \end{lemma}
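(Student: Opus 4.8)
The plan is to reduce the estimation of $F_i(T)$ to a classical result on the count of integers representable by the norm form of $\Q(\sqrt D)$ with prescribed congruence class, and then to isolate the contribution of small primes $p\le z$ by a twisted Euler product manipulation. First I would unwind the definition of $F_i(T)$: it counts $c\le T$ in a fixed residue class modulo $M_i := W_z/\gcd(a_i,W_z)$, where $M_i$ is composed only of primes $\le z$, subject to the constraint that $2\mid v_p(c)$ for every prime $p>z$ that is inert in $\Q(\sqrt D)$. Since $M_i$ is supported on primes $\le z$ and those primes never appear in $c$ (as $\gcd(c,W_z)=1$ forces $c$ coprime to all $p\le z$ — wait, more precisely $c$ is coprime to $W_z/\gcd(a_i,W_z)$ only in the relevant part, but the indicator $2\mid v_p(c)$ is vacuous for $p\le z$ anyway), the ``inert'' constraint only concerns large primes, and we may write
$$
F_i(T)=\sum_{\substack{c\le T\\ c\equiv a_i/\gcd(a_i,W_z)\,(\moo M_i)}} f(c),
\qquad f(c):=\mathds 1\bigl(2\mid v_p(c)\ \forall p>z,\ (\tfrac Dp)=-1\bigr).
$$

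The key step is to recognise $f$ as (essentially) the characteristic function of norms from $\Q(\sqrt D)$, adjusted only at primes $\le z$. Indeed an integer $c>0$ coprime to $W_z$ lies in $\mathcal N_D$ if and only if $2\mid v_p(c)$ for all inert $p$ (a standard consequence of Lemmas~\ref{lem:detectorslemma}--\ref{lem:detectorslemma2}, since for $c$ coprime to $2D$ the only local obstructions are at inert primes), except that positivity imposes $c>0$ and the $\Q_2$ and $p\mid D$ conditions are controlled by the congruence class. Thus counting $c\le T$ with $f(c)=1$ in a fixed class is counting shifted norms, and by the classical Landau-type theorem — the number of $n\le x$ all of whose inert prime factors occur to even multiplicity is asymptotic to $C x/\sqrt{\log x}$ — adapted to arithmetic progressions (which is where the hypothesis $W_z\le(\log T)^{25}$ enters, keeping the modulus small relative to $\log T$), one gets
$$
F_i(T)=\bigl(\text{density factor}\bigr)\cdot\frac{T}{\sqrt{\pi\log T}}+O\!\Bigl(\frac{T}{\log T}\Bigr).
$$
The density factor splits as a product over $p\le z$ (from the congruence condition, contributing $\gcd(a_i,W_z)/W_z$ times a local correction, and the vanishing-or-doubling phenomenon $1+(\tfrac D{a_i/\gcd(a_i,W_z)})$ coming from whether the residue class can contain any norm at all) times an Euler product over $p>z$, which is exactly $\gamma_0\prod_{p\le z}(1-1/p)^{-1/2}$ after factoring the $(1-1/p)^{1/2}$'s appropriately between the two ranges.

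Concretely, the steps in order are: (i) rewrite $F_i(T)$ as a sum of $f$ over an arithmetic progression with modulus $M_i\mid W_z$; (ii) invoke a Selberg--Delange / Landau asymptotic for $\sum_{c\le T,\,c\equiv b\,(\moo M_i)} f(c)$ with explicit main term $\kappa(b,M_i)\,T/\sqrt{\pi\log T}+O(T/\log T)$, valid uniformly for $M_i\le(\log T)^{25}$ (one can cite such a result from the literature on mean values of multiplicative functions in progressions, e.g. via the Landau--Selberg--Delange method with a contour shift, or deduce it from results already used elsewhere in the paper for the $R=1$ case); (iii) compute $\kappa(b,M_i)$ as an Euler product, separating $p\le z$ from $p>z$; (iv) observe that the ``$p\le z$'' part contributes $\tfrac{\gcd(a_i,W_z)}{W_z}\prod_{p\le z}(1-1/p)^{-1/2}$ together with the factor $1+(\tfrac D{a_i/\gcd(a_i,W_z)})$ — this last because $f$ is supported on norms, and a full residue class modulo $M_i$ contains norms only when the Kronecker symbol of $D$ against the coprime-to-$W_z$ part of $b$ equals $1$; (v) identify the ``$p>z$'' part with $\gamma_0$. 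The main obstacle is step (ii)–(iii): establishing the asymptotic for $f$ along progressions with a power-of-logarithm-sized modulus and with the \emph{correct} leading constant, including the subtle all-or-nothing factor $1+(\tfrac D\cdot)$, which requires care because $f$ is not multiplicative in a clean way across the cut at $z$ and because the error term $O(T/\log T)$ must be genuinely uniform in the modulus. I expect this to be handled by writing $\sum f(c) c^{-s}$ as $L(s,\chi_D)^{1/2}$ times an absolutely convergent correction (over $p\le z$ and ramified/split primes), applying Perron and a Hankel-contour argument around $s=1$, and tracking the dependence on $M_i$ through trivial bounds on the correction factor in the region $\Re(s)\ge 1-c/\log T$.
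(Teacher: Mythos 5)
Your proposal takes essentially the same route as the paper: both restrict $F_i(T)$ to the relevant arithmetic progression, apply a Landau--Selberg--Delange estimate for the multiplicative indicator that all inert primes $p>z$ occur to even powers (the paper makes the Dirichlet-character orthogonality modulo $W_z/\gcd(a_i,W_z)$ explicit and cites Koukoulopoulos's Theorem~13.8 together with the Siegel--Walfisz theorem to keep the error uniform in the modulus), and derive the factor $1+(\tfrac{D}{a_i/\gcd(a_i,W_z)})$ from the observation that only the principal character and the character induced by $(\tfrac{D}{\cdot})$ contribute a main term. Filling in your step~(ii) would reproduce the paper's computation, so the two arguments coincide up to the level of explicitness.
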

 \begin{proof}
 Denoting $a:= 
a_i/\gcd(a_i,W_z), q:= W_z/\gcd(a_i,W_z)$, we  have   $\gcd(a,q)=1$. Hence, 
 \begin{equation}
 \label{eq:livietta}
 F_i(T) =\frac{1}{\phi(q)}\sum_{\chi\md q } \overline{\chi(a)}
 \sum_{ n\leq T  }f_\chi(n)  .
 \end{equation}
  To 
  estimate the sum over $n$ we shall
  employ~\cite[Theorem 13.8]{koukou}. 
  To
  verify~\cite[Equation (13.1)]{koukou}
  we let $Q=\exp(q^{1/100})$ so that if $A>0$ is fixed 
 and $x\geq Q$
  the Siegel--Walfisz theorem gives 
\begin{align*}
  \sum_{p\leq x } f_\chi(p)\log p=& 
    \sum_{\substack{ p\leq x\\ p>z\Rightarrow
    \left(\frac{D}{p}\right)=1} } \chi(p)\log p
=O(z)+
    \sum_{\substack{ p\leq x\\ 
    \left(\frac{D}{p}\right)=1} } \chi(p)\log p
    \\ = & 
  \frac{x}{2} \mathds 1 \l (\chi=\chi_0 \text{ or } 
  \chi\cdot  \left(\frac{D}{\cdot}\right)=\chi_0 \r )
  +O_A\l(z+\frac{x}{(\log x)^A}\r) ,\end{align*}
  where $\chi_0$ is the principal Dirichlet character
  modulo $q$.
The assumption $z> 200$ shows  that 
$$ \log \log z\leq (\log 0.9)+ \frac{z}{100} \Rightarrow
\log z \leq 0.9 \prod_{p\leq z}p^{1/100}
 \leq 0.9 q^{1/100}
 \Rightarrow z \leq Q^{0.9}\leq x^{0.9}
 ,$$ thus, the previous error term is 
 $O_A(x/(\log x)^A )$.  Now observe that 
  $4D$ divides $q$ due  
  to~\eqref{eq:padrona}, hence, 
  the character   $\chi(n) (\frac{D}{n})$ is
  principal if and only if $\chi(n)$ is induced by $(\frac{D}{n})$. 
Hence, 
by~\cite[Equations (13.09),(13.11)]{koukou} with
$k=J=\epsilon=1,\kappa=1/2$
we infer that when 
$T\geq (\log Q)^2=q^{1/50} $ one has 
 \begin{equation}
 \label{eq:livietta2} \sum_{n\leq T } f_\chi(n) = 
\frac{c(\chi)}{\sqrt \pi } 
\frac{T}{(\log T)^{1/2}}+O\l(\frac{T q^{1/50} }{(\log T)^{3/2}}\r),\end{equation}  where  $$ c(\chi)= 
\mathds 1 \l (\chi=\chi_0 \text{ or } 
  \chi\cdot  \left(\frac{D}{\cdot}\right)=\chi_0 \r )
  \prod_{p=2} ^\infty \l(\sum_{j=0} ^\infty \frac{f_\chi(p^j)}{p^{j}}\r)
   \l(1-\frac{1}{p} \r)^{1/2}$$
   and the implied constant 
  depends only on $D$. The assumption 
  $W_z \leq (\log T)^{25}$ shows $T\geq q^{1/50}$.
  Furthermore, it guarantees that 
  $q^{1/50} \leq W_z^{1/50}\leq (\log T)^{1/2}$, which 
  proves the error term in our lemma. 
  Regarding the   constant $c(\chi)$, note that 
  if $(\frac{D}{p})=-1$ and $p>z $
  then 
  $$\sum_{j=0} ^\infty \frac{f_\chi(p^j)}{p^{j}}=
  \sum_{t=0} ^\infty \frac{\chi(p^{2t})}{p^{2t}}=\l(1-\chi(p)^2 p^{-2}\r)^{-1}.$$ 
Similarly,  if $(\frac{D}{p})=1$ and $p>z$
  then  $\sum_{j\geq 0}  f_\chi(p^j) p^{-j}
=(1-\chi(p) p^{-1})^{-1}.$ 
For primes $p\leq z$
we have $p\mid q$, hence, $\chi(p^j)$ vanishes.
Putting everything together yields
$$c(\chi)=  
\prod_{p\leq z}    \l(1-\frac{1}{p} \r)^{1/2}
\prod_{p> z}    \l(1-\frac{1}{p} \r)^{1/2}
\cdot \begin{cases}  
\l(1-  \chi(p) p^{-1} \r)^{-1}, & (\frac{D}{p})=1, \\
\l(1- \chi(p)^2 p^{-2}  \r)^{-1}, & (\frac{D}{p})=-1.
\end{cases}
$$
In particular, $c(\chi_0)=c((\frac{D}{\cdot}))$
because 
if $\chi=(\frac{D}{\cdot})$ and 
$(\frac{D}{p})=1$ 
then $\chi(p)=1$, while, 
$\chi(p)^2=1$ for  all   $p>z$.
Hence, 
$$c(\chi_0)=  c\l(\l(\frac{D}{\cdot}\r)\r)=
\gamma_0  \cdot 
\prod_{p\leq z}    \l(1-\frac{1}{p} \r)^{1/2} 
,$$  where $\gamma_0 $ is as in the statement of this lemma.
Note that   $$\frac{1}{\phi(q)}=
\frac{1}{q} \prod_{p\leq z} \l(1-\frac{1}{p}\r)^{-1} =\frac{\gcd(a_i,W_z) }{W_z} \prod_{p\leq z} \l(1-\frac{1}{p}\r)^{-1} $$
  since $q$ is divisible by all primes $p\leq z$ owing 
  to~\eqref{eq:padrona}. We may hence write 
$$ \frac{ c(\chi_0)}{\phi(q)}= \gamma_0 
\frac{\gcd(a_i,W_z) }{W_z} \prod_{p\leq z} \l(1-\frac{1}{p}\r)^{-1/2} .$$ 
Serving this together with~\eqref{eq:livietta2} to~\eqref{eq:livietta}
yields 
 $$
 F_i(T) = \gamma_0   \frac{\gcd(a_i,W_z) }{W_z} 
 \prod_{p\leq z} \l(1-\frac{1}{p}\r)^{-1/2} 
 \frac{T}{(\pi\log T)^{1/2}}
\sum_{\substack{ \chi\md q \\ c(\chi)\neq 0 } }\overline{\chi(a)}
+O\l(\frac{T} {\log T}\r) .
 $$ Recall that 
 $c(\chi) \neq 0$ exactly when 
 $\chi$ is 
   principal or induced by 
 the quadratic character $(\frac{D}{\cdot})$, thus, 
 $$\sum_{\substack{ \chi\md q \\ c(\chi)\neq 0 } }\overline{\chi(a)}=  1+ \l(\frac{D}{a} \r),$$
 which is sufficient.     \end{proof}
\begin{remark}
The presence of      $1+(\frac{D}{
a_i/\gcd(a_i,W_z) 
 })$ in Lemma~\ref{lem:koukou} 
 is natural. 
 Indeed, if there  is any integer $c$
 counted by $F_i$ then  its prime factorisation is 
 $c=p_1^{\alpha_1}\cdots p_r^{\alpha_r}$, 
 where the $p_i$ are distinct and such that 
 $\alpha_i$ is even whenever $(\frac{D}{p_i})=-1$.
 Then  $$1=\prod_{i=1}^r \l(\frac{D}{p_i^{\alpha_i} }\r)=
 \l(\frac{D}{c}\r)=\l(\frac{D}{a_i/\gcd(a_i,W_z)}\r),$$
where the last equality comes from the congruence 
 $ c \equiv a_i/\gcd(a_i,W_z) \md{W_z/\gcd(a_i,W_z)}$ 
 and  the fact that $4D$ divides $W_z/\gcd(a_i,W_z)$.
 \end{remark} 
 
 \begin{lemma} \label{lem:adrianosyria}
  Assume that $z>200$, 
  $W_z (\log z)^{R/2}\leq (\log \min x_i )^{1/4}$ and that~\eqref{eq:padrona} holds.
  For each   $s\in \{1,-1\}$ we have 
$$ \# \left\{ \mathbf{m} \in \N^R:  
\begin{array}{l} \b m\equiv \b a \md {W_z} ,
(sm_1\cdots m_R, D)_\R=1, \\
(sm_1\cdots m_R, D)_{\Q_p}=1 \ \forall p\leq z, \,\,
m_i\leq x_i \ \forall i, \\
2\mid v_p( m_i)  \ \forall p>z 
 \textrm{ with } (\frac{D}{p})=-1
 \textrm{ and } \forall i  \end{array}\right\}
 =\frac{ \c M+O(\c E) }{ W_z^{R}}
 \prod_{i=1}^R
 \frac{x_i} {  (\log x_i)^{1/2} }
 ,$$   where 
   $$ \c M=
 \mathds 1(\eqref{eq:pass the joint equidistribution})
 \l(\frac{ 2\gamma_0 }{\sqrt \pi}\r)^R
\prod_{p\leq z}\l(1-\frac{1}{p} \r)^{-R/2} 
, \ \c E=  \frac{1}{  (\log \min x_i )^{1/4}}
 ,$$   the implied constant depends only on $D$ and $R$
and  \beq{eq:pass the joint equidistribution}{
(D,s)_\R=1, \quad (D,sa_1\cdots a_R)_{\Q_p}=1
\  \forall p\leq z, \quad 
\l(\frac{D}{a_i/\gcd(a_i,W_z)}\r)=1 \ 
\forall i .}   \end{lemma}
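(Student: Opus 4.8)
The plan is to combine Lemma~\ref{lem:splitting} with Lemma~\ref{lem:koukou} and then expand the resulting product of $R$ one-variable estimates; essentially all the content is uniform bookkeeping of the $z$-dependent constants and error terms against the single standing inequality $W_z(\log z)^{R/2}\leq(\log\min_j x_j)^{1/4}$. Since~\eqref{eq:padrona} is assumed, Lemma~\ref{lem:splitting} rewrites the counting function in the statement as
$$
\mathds 1\bigl((D,s)_\R=1\ \text{and}\ (D,sa_1\cdots a_R)_{\Q_p}=1\ \forall p\leq z\bigr)\ \prod_{i=1}^R F_i(T_i),\qquad T_i:=\frac{x_i}{\gcd(a_i,W_z)},
$$
with $F_i$ as defined there, so it remains to estimate each $F_i(T_i)$ and expand the product. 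To apply Lemma~\ref{lem:koukou} to $F_i(T_i)$ I check its hypotheses: $z>200$ is assumed, and $W_z(\log z)^{R/2}\leq(\log\min_j x_j)^{1/4}$ gives $\gcd(a_i,W_z)\leq W_z\leq(\log x_i)^{1/4}$, hence $T_i\geq x_i/(\log x_i)^{1/4}$ and $\log T_i\geq\tfrac12\log x_i$ — here $\min_j x_j$, and therefore each $x_i$, is automatically enormous, since $W_z\geq 2^z\geq 2^{201}$ — so that $W_z\leq(\log x_i)^{1/4}\leq(\log T_i)^{25}$, as required.

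\emph{The one-variable estimate.} Lemma~\ref{lem:koukou} together with $T_i\gcd(a_i,W_z)=x_i$ gives
$$
F_i(T_i)=\frac{\gamma_0}{\sqrt\pi}\,\frac{x_i}{W_z(\log T_i)^{1/2}}\Bigl(1+\bigl(\tfrac{D}{a_i/\gcd(a_i,W_z)}\bigr)\Bigr)\prod_{p\leq z}\Bigl(1-\tfrac1p\Bigr)^{-1/2}+O\!\Bigl(\frac{x_i}{\log x_i}\Bigr).
$$
Since $\log\gcd(a_i,W_z)\leq\tfrac14\log\log x_i$ one may replace $(\log T_i)^{-1/2}$ by $(\log x_i)^{-1/2}\bigl(1+O(\tfrac{\log\log x_i}{\log x_i})\bigr)$; using $\prod_{p\leq z}(1-1/p)^{-1/2}\asymp(\log z)^{1/2}$ (Mertens) and the bound $\gamma_0\asymp_D 1$ — the divergence of $\prod_{p>z}(1-1/p)^{-1/2}$ cancels against that of the Euler factors with $(\tfrac Dp)=-1$, leaving $\log\gamma_0=\tfrac12\sum_{p>z}p^{-1}(\tfrac Dp)+O_D(1)=O_D(1)$ because $(\tfrac D\cdot)$ is non-principal — this both exhibits the shape of the main term of $F_i(T_i)$ and yields the crude bound $F_i(T_i)\ll_D x_i(\log z)^{1/2}\bigl(W_z(\log x_i)^{1/2}\bigr)^{-1}$, in which the displayed error $O(x_i/\log x_i)$ is of strictly smaller order because $W_z(\log z)^{R/2}\leq(\log\min_j x_j)^{1/4}$.

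\emph{Expanding the product.} Multiplying out $\prod_i F_i(T_i)$, the ``all main terms'' contribution times the indicator from Lemma~\ref{lem:splitting} collapses: each factor $1+(\tfrac{D}{a_i/\gcd(a_i,W_z)})$ equals $0$ or $2$, so the product of these with that indicator equals $2^R\mathds 1(\eqref{eq:pass the joint equidistribution})$, and this contribution is precisely $\c M\,W_z^{-R}\prod_i x_i(\log x_i)^{-1/2}$ up to a factor $1+O(\sum_i\tfrac{\log\log x_i}{\log x_i})$. Every remaining term in the expansion carries at least one factor $O(x_j/\log x_j)$; bounding the other $R-1$ factors by the crude bound above gives a total $\ll_{D,R}W_z(\log z)^{(R-1)/2}(\log\min_j x_j)^{-1/2}\cdot W_z^{-R}\prod_i x_i(\log x_i)^{-1/2}$. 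The standing inequality bounds $W_z(\log z)^{(R-1)/2}(\log\min_j x_j)^{-1/2}\leq(\log\min_j x_j)^{1/4-1/2}=\c E$; the same inequality, together with $\c M\ll_{D,R}(\log z)^{R/2}$, also absorbs the relative error coming from replacing $\log T_i$ by $\log x_i$ into $O(\c E)$. Collecting everything yields the claimed asymptotic with implied constant depending only on $D$ and $R$. The one genuine obstacle is this bookkeeping: confirming that the single inequality $W_z(\log z)^{R/2}\leq(\log\min_j x_j)^{1/4}$ really dominates every error arising from the product expansion and from the passage between $\log T_i$ and $\log x_i$, uniformly in $z$ and in the $a_i$.
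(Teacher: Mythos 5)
Your argument is correct and follows exactly the same route as the paper's own proof: plug Lemma~\ref{lem:koukou} into the factorisation from Lemma~\ref{lem:splitting}, verify the hypothesis $W_z\leq(\log T_i)^{25}$ from the standing inequality, expand the product of $R$ one-variable estimates, and check that both the additive error $O(T_i/\log T_i)$ and the relative error from replacing $\log T_i$ by $\log x_i$ are absorbed into $O(\mathcal{E})$ via $W_z(\log z)^{R/2}\leq(\log\min_j x_j)^{1/4}$. The bookkeeping (including the bound $\gamma_0\asymp_D 1$, for which the paper's Remark~\ref{rem:immolation} gives a sharper statement than you need) matches the paper's computation, so there is nothing to add.
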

 \begin{proof}The assumption 
  $W_z\leq (\log (x_i/\gcd(a_i,W_z)))^{25}$
  of Lemma~\ref{lem:koukou} is met  due to the 
 condition   $ W_z (\log z)^R
 \leq (\log \min x_i)^{1/4}$ . We may thus feed 
  Lemma~\ref{lem:koukou}
  into Lemma~\ref{lem:splitting}. The   error term is 
 \begin{align*}
      \ll  & \sum_{i=1}^R 
 \frac{x_i}{ \log x_i}
\l(
\prod_{j\neq i }  \frac{x_j}{ W_z 
 (\log x_j)^{1/2}} \prod_{p\leq z} 
 \l(1-\frac{1}{p}\r)^{-1/2}
\r)
\\ \ll &   \frac{x_1\cdots x_R}
 { W_z^R (\log x_1)^{1/2} \cdots  (\log x_R)^{1/2}}
 \frac{ W_z (\log z)^{R/2}}{(\log \min x_i )^{1/2}}.
 \end{align*} This is satisfactory due to the assumption 
$W_z (\log z)^R\leq (\log \min x_i )^{1/4}$.

 The ensuing main term equals the one in our lemma save for the fact that each $\log x_i$ is replaced by 
 $\log(x_i/\gcd(a_i,W_z))$. Our assumptions
 imply that  $W_z  \leq (\log  x_i )^{1/4}$, hence, 
 $$\frac{1}{\sqrt{\log (x_i/\gcd(a_i,W_z))}}=
\frac{1}{\sqrt {\log x_i}} 
\l(1+O\l(\frac{\log W_z}{\log x_i }\r)\r)=
\frac{1}{\sqrt {\log x_i}} 
\l(1+O\l(\frac{\log \log x_i}{\log x_i }\r)\r),$$
 thus, introducing the error term $$\ll
 \frac{\log \log \min x_i }{\log \min x_i }
 \frac{(\log z)^{R/2}}{   W_z^R } 
\prod_{i=1}^R  \frac{x_i}{\sqrt{\log x_i}}
,$$ which is satisfactory.
 \end{proof} 
 \begin{remark}
 \label{rem:immolation}It is not difficult to 
 see that  for any fixed $A>0$ one has 
 $$\gamma_0  =1+O_A\l(\frac{1}{ (\log z)^A}\r) .$$ 
Indeed, we have 
$$\log \gamma_0 =O\l(\frac{1}{z}\r)+
\sum_{p>z} \l(-\frac{1}{2p}+
\frac{\mathds 1((\frac{D}{p})=1)}{p}\r)
.$$ 
 Using the strong version of the prime number theorem 
 giving arbitrary logarithmic saving 
 and partial summation one can prove that
      $$\sum_{ z<p\leq t }\frac{1}{2p}=\frac{1}{2}
 \log \frac{\log t}{\log z} +O_A\l(\frac{1}{(\log z)^A}\r )
 \textrm{ and }
 \sum_{\substack{z<p\leq t \\
 (\frac{D}{p})=1}}\frac{1}{p}=\frac{1}{2}
 \log \frac{\log t}{\log z}
 +O_A\l(\frac{1}{(\log z)^A}\r ).$$  
This leads to $\log \gamma_0 \ll_A (\log z)^{-A}$,
hence   $\gamma_0 =1+O_A((\log z)^{-A})$.
 \end{remark}
\subsection{Analytic lemmas}
\label{s:arcturus is a most fucking awsome band}
We first  need a version of the large sieve
as given in the work of Serre~\cite{serresieve}
but with boxes of arbitrary size.
It can be proved by combining work of 
Huxley and Kowalski; here 
  we recall the version 
that is stated in~\cite[Lemma 5.1]{wilson}.
\begin{lemma}
\label{lem:shiuconseqnjdf8d8}
Fix $s,R \in \mathbb N$ and for any prime $p$ let  
$Y_p$ be a subset of $(\Z/p^s\Z)^R$. Then 
for all $N_1,\ldots, N_R ,L  \geq 1$ we have 
$$
\#\{\b n \in \Z^R: |n_i| \leq N_i \ \forall i, \,\, 
\b n \md{p^s} \notin Y_p \ \forall p\leq L\}
\ll_{s,R} \frac{\prod_{i=1}^R (N_i+L^{2s} )}{ F(L)}
$$ where the implied constant depends at most on $s,R$
and we define 
$$F(L):=\sum_{\substack{m \ \mathrm{square}-\mathrm{free} 
\\  1\leq m\leq L }} 
\
\prod_{\substack{p \ \mathrm{ prime}\\
p\mid m}} \frac{\#Y_p}{p^{Rs}-\#Y_p } .$$
\end{lemma}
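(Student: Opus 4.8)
The plan is to deduce this from the classical large sieve; the statement is essentially~\cite[Lemma 5.1]{wilson}, obtained by marrying the box--shaped large sieve of Huxley to Kowalski's general sieve formalism, and I would only need to assemble the pieces. First I would write $S$ for the set being counted and observe that the box $\prod_i\{|n_i|\leq N_i\}$ has sides $2N_i+1\leq 3(N_i+L^{2s})$, so up to a factor $3^R$ I may treat it as a box with sides $N_i$. Then I would apply the $R$--dimensional large sieve inequality to the coefficients $a_{\mathbf n}=\mathds 1(\mathbf n\in S)$ and to the family of moduli $\mathcal Q=\{q_m:=\prod_{p\mid m}p^{s}\ :\ m\text{ square-free},\ m\leq L\}$; since every $q_m\leq L^{s}$, Huxley's form of the inequality supplies
$$\sum_{q\in\mathcal Q}\ \sideset{}{^*}\sum_{\mathbf a\bmod{q}}\ \Bigl|\sum_{\mathbf n\in S}e\!\left(\frac{\langle\mathbf a,\mathbf n\rangle}{q}\right)\Bigr|^{2}\ \ll_{R}\ \prod_{i=1}^{R}\bigl(N_i+L^{2s}\bigr)\cdot\#S .$$

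For the matching lower bound I would run the standard arithmetic argument. By hypothesis $S$ reduced modulo $p^{s}$ avoids $Y_p$, hence occupies at most $p^{Rs}-\#Y_p$ residue classes, so by the Chinese Remainder Theorem $S$ reduced modulo $q_m$ occupies at most $\prod_{p\mid m}(p^{Rs}-\#Y_p)$ classes. Feeding this bound into Cauchy--Schwarz together with the M\"obius--type inversion of Montgomery---carried out for the prime-power moduli $q_m$, the recursion in the exponent $s$ being dealt with as in Kowalski's framework---should give
$$\sum_{q\in\mathcal Q}\ \sideset{}{^*}\sum_{\mathbf a\bmod{q}}\ \Bigl|\sum_{\mathbf n\in S}e\!\left(\frac{\langle\mathbf a,\mathbf n\rangle}{q}\right)\Bigr|^{2}\ \geq\ (\#S)^{2}\sum_{\substack{m\text{ square-free}\\ 1\leq m\leq L}}\ \prod_{p\mid m}\frac{\#Y_p}{p^{Rs}-\#Y_p}\ =\ (\#S)^{2}\,F(L).$$
Comparing the two displays yields $(\#S)^{2}F(L)\ll_{R}\prod_i(N_i+L^{2s})\,\#S$, hence $\#S\ll_{s,R}\prod_i(N_i+L^{2s})/F(L)$, which is the assertion (and $F(L)\geq 1$ from the term $m=1$, so the bound is always meaningful).

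The delicate points---exactly those packaged into~\cite[Lemma 5.1]{wilson}---are where I expect to spend effort. The first is getting the large sieve constant to be $\prod_i(N_i+L^{2s})$ rather than the cruder $\prod_i(N_i+L^{s})^{2}$ or $\prod_i\max(N_i,L^{s})^{2}$: this is precisely Huxley's refinement of the box large sieve, and it genuinely matters here because some of the $N_i$ may be far smaller than $L^{2s}$ in the intended application. The second is the multiplicativity of the sifting densities across the non-square-free moduli $q_m$, i.e.\ checking that the primitive-residue exponential sums over $q_m$ reassemble into $\prod_{p\mid m}\#Y_p/(p^{Rs}-\#Y_p)$; this I would obtain from the CRT factorisation of primitive residues modulo $q_m$ plus the single-prime-power estimate $\sideset{}{^*}\sum_{\mathbf a\bmod{p^{s}}}|\sum_{\mathbf n\in S}e(\langle\mathbf a,\mathbf n\rangle/p^{s})|^{2}\gg(\#S)^{2}\,\#Y_p/(p^{Rs}-\#Y_p)$, which is just Cauchy--Schwarz applied to the fibre counts of $S$ modulo $p^{s}$. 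Neither step touches the norm condition, so the whole lemma is purely analytic and, modulo these standard facts, routine.
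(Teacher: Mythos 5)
Your proposal follows exactly the route the paper indicates: the paper does not actually prove Lemma~\ref{lem:shiuconseqnjdf8d8} but simply cites~\cite[Lemma 5.1]{wilson}, remarking that ``it can be proved by combining work of Huxley and Kowalski.'' Your sketch is a fleshed-out version of precisely that combination — Huxley's box-shaped $R$-dimensional large sieve for the analytic upper bound, and Kowalski's extension of Montgomery's duality/Möbius argument to prime-power moduli $p^s$ for the arithmetic lower bound $(\#S)^2 F(L)$ — so the two agree in approach. The only slight imprecision is cosmetic: you don't need the preliminary remark that $2N_i+1\le 3(N_i+L^{2s})$, since Huxley's inequality already produces the additive form $\prod_i(N_i+Q^2)$ with $Q=\max_{m\le L}\prod_{p\mid m}p^s\le L^s$, hence $\prod_i(N_i+L^{2s})$ directly; but as the implied constant may depend on $R$ this does no harm. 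The one step you flag as ``delicate'' (reassembling the primitive exponential sums mod $q_m=\prod_{p\mid m}p^s$ multiplicatively into $\prod_{p\mid m}\#Y_p/(p^{Rs}-\#Y_p)$) is indeed the genuine content of Kowalski's prime-power formalism and of Wilson's lemma, and you correctly identify the single-prime-power Cauchy–Schwarz estimate from which it is built. In short: correct, and the same method the paper points to.
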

We provide a direct application below. 
Recall the definition of $\c N_D$ in~\eqref{def:norms}.
\begin{lemma}
\label{lem:shiuconsapplicsaser}
Fix $R \in \mathbb N$. Then 
for all $N_1,\ldots, N_R   > 1$ we have 
$$
\#\{\b n \in \Z^R:
n_1\cdots n_R \in \c N_D,
|n_i| \leq N_i \ \forall i\}
\ll_{D,R}   \frac{N_1 \cdots N_R}{ 
\min_i (\log  N_i)^{R/2} }
$$ where the implied constant depends at most on $D$ and 
$R$. \end{lemma}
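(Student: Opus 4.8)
The plan is to derive this from Serre's large sieve, Lemma~\ref{lem:shiuconseqnjdf8d8}, applied with $s=2$, the input being the local obstruction at inert primes recorded in part~(2) of Lemma~\ref{lem:detectorslemma}.

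\textbf{Setting up the sift.} Suppose $\b n\in\Z^R$ has $n_1\cdots n_R\in\c N_D$. By Lemma~\ref{lem:detectorslemma}(2), for every prime $p$ with $p\nmid 2D$ and $(\tfrac Dp)=-1$ — I will call such $p$ \emph{inert}, and these form a set of primes of natural density $\tfrac12$ — we have $2\mid v_p(n_1\cdots n_R)=\sum_{i=1}^R v_p(n_i)$, so in particular this quantity is never $1$. Equivalently, it cannot happen that exactly one coordinate $n_i$ is divisible by $p$ while that coordinate is not divisible by $p^2$. Accordingly, for each inert prime $p$ I set
$$Y_p:=\Bigl\{\b a\in(\Z/p^2\Z)^R:\ \#\{i:p\mid a_i\}=1\ \text{ and }\ p^2\nmid a_{i_0}\ \text{ for the unique }i_0\text{ with }p\mid a_{i_0}\Bigr\},$$
and set $Y_p:=\emptyset$ for every other prime $p$. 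Then any $\b n$ counted on the left-hand side of the lemma satisfies $\b n\md{p^2}\notin Y_p$ for all primes $p$, so for every $L\ge1$
$$\#\{\b n\in\Z^R:n_1\cdots n_R\in\c N_D,\ |n_i|\le N_i\ \forall i\}\le\#\{\b n\in\Z^R:|n_i|\le N_i\ \forall i,\ \b n\md{p^2}\notin Y_p\ \forall p\le L\}.$$

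\textbf{Applying the large sieve.} For inert $p$, counting the choices (the special index $i_0$, then $a_{i_0}$ of $p$-valuation $1$, then the other $R-1$ coordinates coprime to $p$) gives $\#Y_p=R\,p^{R-1}(p-1)^R$, hence
$$g(p):=\frac{\#Y_p}{p^{2R}-\#Y_p}=\frac Rp+O_R\!\Bigl(\frac1{p^2}\Bigr)\quad(p\text{ inert}),\qquad g(p)=0\ \text{ otherwise.}$$
Since the inert primes have density $\tfrac12$, Mertens' theorem in arithmetic progressions gives $\sum_{p\le L}g(p)=\tfrac R2\log\log L+O_{D,R}(1)$, and a standard lower bound for mean values of nonnegative multiplicative functions (of the kind that accompanies Serre's large sieve, \cite{serresieve}) yields
$$F(L)=\sum_{\substack{m\le L\\ m\text{ square-free}}}\ \prod_{p\mid m}g(p)\ \gg_{D,R}\ (\log L)^{R/2}.$$
I would then take $L:=(\min_i N_i)^{1/4}$, so that $L^{2s}=L^4=\min_i N_i\le N_i$ for all $i$ and $\prod_{i=1}^R(N_i+L^4)\le 2^R N_1\cdots N_R$. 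Lemma~\ref{lem:shiuconseqnjdf8d8} now gives
$$\#\{\b n:n_1\cdots n_R\in\c N_D,\ |n_i|\le N_i\}\ \ll_R\ \frac{\prod_{i=1}^R(N_i+L^4)}{F(L)}\ \ll_{D,R}\ \frac{N_1\cdots N_R}{(\log L)^{R/2}}\ \asymp_R\ \frac{N_1\cdots N_R}{\min_i(\log N_i)^{R/2}},$$
using $\log L=\tfrac14\log(\min_i N_i)=\tfrac14\min_i\log N_i$. If $\min_i N_i$ is below a suitable constant depending only on $R$ the asserted estimate is trivial — the denominator $\min_i(\log N_i)^{R/2}$ is then $O_R(1)$, and for $\min_i N_i$ near $1$ the right-hand side even exceeds $N_1\cdots N_R$ — so that range can be discarded at the outset.

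\textbf{Main obstacle.} The only step that is not purely formal is the lower bound $F(L)\gg_{D,R}(\log L)^{R/2}$: one must ensure that the truncation $m\le L$ does not cost a power of $\log\log L$, i.e.\ that $F(L)$ really attains the order $\prod_{p\le L}(1+g(p))\asymp(\log L)^{R/2}$ dictated by the sieve dimension $R\cdot\tfrac12$, rather than the weaker $(\log L/\log\log L)^{R/2}$ that a crude Rankin argument would produce. This is precisely the sieve-dimension estimate underlying Serre's large sieve, which I would cite; everything else (the count of $\#Y_p$, the Mertens estimate, the choice of $L$) is routine.
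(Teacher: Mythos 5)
Your argument is essentially identical to the paper's: both apply Lemma~\ref{lem:shiuconseqnjdf8d8} with $s=2$ and the same sifting set $Y_p$ (your "exactly one coordinate with $p$-valuation exactly $1$" description is precisely the set $\{p\mid y_1\cdots y_R,\ p^2\nmid y_1\cdots y_R\}$ used in the paper), and both reduce to the sieve-dimension lower bound $F(L)\gg(\log L)^{R/2}$, which the paper derives by feeding a multiplicative minorant into \cite[Theorem 13.2]{koukou} and applying partial summation — this is exactly the "standard lower bound for mean values of nonnegative multiplicative functions" you invoke, so you correctly identified the one step that needs a citation. One small point in your favour: you take $L=(\min_iN_i)^{1/4}$, which makes $L^{2s}=\min_iN_i\le N_i$ and thus $\prod_i(N_i+L^{2s})\ll N_1\cdots N_R$ go through cleanly; the paper writes $L=\min\{N_i\}$, which as stated would make $L^{2s}=L^4$ dominate the $N_i$ (e.g.\ when all $N_i$ coincide) — your choice is the correct one. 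Your exact count $\#Y_p=R(p-1)^Rp^{R-1}$ is a minor improvement over the paper's slightly looser $M_p-E_p$ bookkeeping, and your remark that the small-$N_i$ range is trivial is the right way to dispose of the degenerate case.
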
 \begin{proof}  We employ 
Lemma~\ref{lem:shiuconseqnjdf8d8}
with  $s=2,R=r, L=\min \{N_i\}$ and 
$$Y_p=\{\b y \in (\Z/p^2\Z)^R: 
  y_1\cdots y_R\equiv 0\md{p}
  \text{ and }    
  y_1\cdots y_R\not\equiv 0\md{p^2}
  \}$$ for primes $p$
  such that $  (\frac{D}{p})=-1  $ and 
     $ p>2R  $. We let $Y_p =\emptyset $ for all other primes. 
Note that if an integer $m$ is in $\c N_D$
then for every prime $p$ with $(\frac{D}{p})=-1$
we must have $v_p(m) \neq 1$. Therefore, 
for all primes $p$ with $(\frac{D}{p})=-1$
and all $\b n$ with $n_1\cdots n_R$ in $\c N_D$
we must have that the image of 
$n_1\cdots n_R$ is in the 
complement of $Y_p \md{p^2}$.
From Lemma~\ref{lem:shiuconseqnjdf8d8}
we obtain the upper bound 
 $ \ll  N_1\cdots N_R / F(\min N_i).$

 To estimate $F$ we need a lower bound on $\#Y_p$. 
 Note $\#Y_p=M_p-E_p$, where 
 $$M_p=\#\{\b y \in (\Z/p^2\Z)^R: \exists i \in \{1,\dots,R\}, \, p\mid y_i \},
E_p= \#\{\b y \in (\Z/p^2\Z)^R: p^2\mid y_1\cdots y_R \}.$$
We clearly have 
 $M_p=R p^{2(R-1)} 
\#\{ y_1 \in \Z/p^2\Z: p\mid y_1  \}=R p^{2R-1} $ 
and 
$$E_p\leq \sum_{i\neq j }
\#\{\b y \in (\Z/p^2\Z)^R: p\mid (y_1,y_2) \}
+ R
\#\{\b y \in (\Z/p^2\Z)^R: p^2\mid y_1  \}
\leq 2R^2 p^{2R-2}.$$
Hence, $ \#Y_p \geq p^{2R} (R/p-
2R^2/p^2 )$, thus,
$$ \frac{\#Y_p}{p^{2R}-\#Y_p } 
\geq 
 \frac{\#Y_p}{p^{2R} } 
 \geq \frac{f(p)}{p},
 $$
 where $f$ is a multiplicative function defined by 
 $$
 f(p^e)= \mathds 1\l(e=1
 \ \mbox{and} \ 
(\frac{D}{p})=-1
\ \mbox{and} \  p>2R \r) 
R\l(1-\frac{2R}{p} \r)
$$  We obtain   
 $F(T) \geq  \sum_{ m\leq T  }  f(m) /m$. 
 By~\cite[Theorem 13.2]{koukou} with $\kappa=R/2$  
 and $J=1$ we get 
 $$ \sum_{m\leq x} f(m) \gg x(\log x)^{R/2-1}.$$
 By partial summation   we then  deduce 
$ F(T) \gg (\log T)^{R/2}$, which concludes the proof.    \end{proof} 
By the change of variables 
$n_1=p^{2\alpha} n_1'$ and  Lemma~\ref{lem:shiuconsapplicsaser}
we obtain:
\begin{lemma}
\label{lem:shiucuwer6}
For all 
$\alpha, R \in \mathbb N,
N_1,\ldots, N_R   > 1$ 
and prime powers $p^{2\alpha }\leq \sqrt N_1$ we have 
$$
\#\{\b n \in \Z^R: 
n_1\cdots n_R \in \c N_D,
|n_i| \leq N_i \ \forall i, n_1\equiv 0 \md{p^{2\alpha}}\}
\ll  \frac{N_1 \cdots N_R}{ p^{2\alpha }
\min_i (\log  N_i)^{R/2} }
$$ where the implied constant depends at most on $D$ and 
$R$. \end{lemma}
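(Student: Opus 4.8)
The plan is to reduce the statement directly to Lemma~\ref{lem:shiuconsapplicsaser} via the change of variables $n_1 = p^{2\alpha} n_1'$, exploiting the fact that $p^{2\alpha}$ is a perfect square and therefore has no effect on membership in $\c N_D$.

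First I would record the elementary equivalence
\[
m \in \c N_D \iff p^{2\alpha} m \in \c N_D \qquad (m \in \Z).
\]
For the forward implication, if $m = \mathrm{N}_{\Q(\sqrt D)/\Q}(\beta)$ with $\beta \in \Q(\sqrt D)$, then $p^{2\alpha} m = \mathrm{N}_{\Q(\sqrt D)/\Q}(p^{\alpha}\beta)$ and $p^{\alpha}\beta \in \Q(\sqrt D)$; conversely, if $p^{2\alpha} m = \mathrm{N}_{\Q(\sqrt D)/\Q}(\gamma)$ then $m = \mathrm{N}_{\Q(\sqrt D)/\Q}(\gamma/p^{\alpha})$ with $\gamma/p^{\alpha} \in \Q(\sqrt D)$, and $m$ is an integer by hypothesis. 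This uses only that $\mathrm{N}_{\Q(\sqrt D)/\Q}(p^{\alpha}) = p^{2\alpha}$ and the multiplicativity of the norm.

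Next, writing $n_1 = p^{2\alpha} n_1'$ sets up a bijection between the integers $n_1$ with $p^{2\alpha} \mid n_1$ and $|n_1| \le N_1$ and the integers $n_1'$ with $|n_1'| \le N_1/p^{2\alpha}$; under this bijection $n_1 n_2 \cdots n_R \in \c N_D$ if and only if $n_1' n_2 \cdots n_R \in \c N_D$ by the previous paragraph. Hence the cardinality to be bounded equals
\[
\#\bigl\{(n_1', n_2, \ldots, n_R) \in \Z^R : n_1' n_2 \cdots n_R \in \c N_D, \ |n_1'| \le N_1/p^{2\alpha}, \ |n_i| \le N_i \ (2 \le i \le R)\bigr\}.
\]
Since $p^{2\alpha} \le \sqrt{N_1}$ we have $N_1/p^{2\alpha} \ge \sqrt{N_1} > 1$, so all of $N_1/p^{2\alpha}, N_2, \ldots, N_R$ exceed $1$ and Lemma~\ref{lem:shiuconsapplicsaser} applies, giving the upper bound
\[
\ll_{D,R} \frac{(N_1/p^{2\alpha}) N_2 \cdots N_R}{\min\bigl(\log(N_1/p^{2\alpha}),\, \log N_2,\, \ldots,\, \log N_R\bigr)^{R/2}}.
\]

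Finally I would note that $\log(N_1/p^{2\alpha}) \ge \log\sqrt{N_1} = \tfrac12 \log N_1$, so the minimum in the denominator is $\ge \tfrac12 \min_i \log N_i$; absorbing the resulting factor $2^{R/2}$ into the implied constant yields the asserted bound. There is no genuine obstacle here: the only point needing care is that dividing $N_1$ by $p^{2\alpha}$ must not destroy the logarithmic saving, and the hypothesis $p^{2\alpha} \le \sqrt{N_1}$ is exactly what keeps $\log(N_1/p^{2\alpha})$ comparable to $\log N_1$.
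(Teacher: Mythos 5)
Your proof is correct and takes exactly the same route as the paper, which simply invokes the change of variables $n_1 = p^{2\alpha} n_1'$ together with Lemma~\ref{lem:shiuconsapplicsaser}; you have merely spelled out the details (that $p^{2\alpha}$ is a norm, hence membership in $\c N_D$ is preserved, and that the hypothesis $p^{2\alpha}\leq\sqrt{N_1}$ keeps the logarithm comparable).
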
 
Recall the definition of $c'$ in~\eqref{eq:blindleadingtheblind}.
\begin{lemma}
    \label{lem:freezers}
    For  any prime $p$, $m\in \mathbb N$ and any  
    $\boldsymbol \nu   \in \Z^R$ 
    the limit 
    $$ \sigma_p(\boldsymbol \nu )=\lim_{m\to\infty }
    \frac{\#\{ \b t \in (\Z/p^m\Z)^{n+1}:
    (f_1(\b t),\ldots, f_R(\b t ))=\boldsymbol \nu  \}}{p^{m
    (n+1-R)}}$$ exists. Furthermore, 
\beq{eq:freezers1}{   \sigma_p(\boldsymbol \nu )= 1+O(p^{-1-c})}  and \beq{eq:freezers2}{ 
 \frac{\#\{ \b t \in (\Z/p^m\Z)^{n+1}: 
 (f_1(\b t),\ldots, f_R(\b t ))=\boldsymbol \nu \}}{p^{m(n+1-R)}}
 =\sigma_p(\boldsymbol \nu ) +O(p^{-(m+1)(c+1)}) ,} where the implied constants depends only on $f_i$.
    \end{lemma}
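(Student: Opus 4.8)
The plan is to reduce everything to Birch's work~\cite{MR0150129} on the singular series for the system $f_1=\cdots=f_R=0$, applied after the substitution that turns the inhomogeneous system $f_i(\b t)=\nu_i$ into a homogeneous one in the auxiliary variable. First I would recall that for the fibre-counting problem the relevant local factor is the $p$-adic density of the variety $V_{\boldsymbol\nu}\subset \AA^{n+1}$ cut out by $f_i(\x)=\nu_i$; the existence of the limit defining $\sigma_p(\boldsymbol\nu)$ and its identification with a $p$-adic integral is the standard Hensel/Lang--Weil type argument, but here one must be careful that the bounds are uniform in $\boldsymbol\nu$. The key input is that Definition~\ref{def:birch} forces the system $\nabla f_1,\ldots,\nabla f_R$ to have rank $R$ off a set of codimension $\mathfrak B$ in $\CC^{n+1}$, and this is exactly the condition under which Birch's estimates~\cite[Lemma 5.4 and the surrounding analysis]{MR0150129} give power-saving control on the exponential sums $S_{\b a,q}$ uniformly in the additive characters, which is what makes the singular series converge absolutely at a geometric rate.

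The steps, in order, would be: (i) Write, for each fixed $m$, the counting function $\#\{\b t\in(\Z/p^m\Z)^{n+1}: f_j(\b t)=\nu_j\,\forall j\}$ as $p^{-mR}\sum_{\b a\in(\Z/p^m\Z)^R} S_{\b a,p^m}\,\mathrm e(-\langle \b a,\boldsymbol\nu\rangle/p^m)$ by detecting each congruence $f_j(\b t)\equiv\nu_j\bmod p^m$ with additive characters (the same identity already used in the proof of Lemma~\ref{lem:cramer}). (ii) Split the sum over $\b a$ into the trivial term $\b a=\0$, which contributes $p^{m(n+1-R)}$, and the nontrivial terms, grouping them by $q=p^m/\gcd(\b a,p^m)$ running over $p,p^2,\ldots,p^m$; for each such $q$ Birch's bound gives $|S_{\b a,q}|\ll q^{n+1-\mathfrak B 2^{-d+1}/(d-1)+\epsilon}$ uniformly in $\b a$ coprime to $q$, which by the definition~\eqref{eq:blindleadingtheblind} of $c'$ translates into a bound of shape $q^{n+1-R-2c'+\epsilon}$ with $c':=$ that constant; renormalising by $p^{m(n+1-R)}$ one sees each complete sum over $\b a$ with exact denominator $q$ contributes $O(q^{-1-c})$ for a suitable $c\in(0,c']$, and the tail $\sum_{q=p^k, k>m}$ is $O(p^{-(m+1)(c+1)})$. (iii) Conclude: the full series over $k\ge1$ converges, so the limit $\sigma_p(\boldsymbol\nu)$ exists and equals $1+\sum_{k\ge1}(\text{term at }q=p^k)=1+O(p^{-1-c})$, giving~\eqref{eq:freezers1}; and truncating the series at $q=p^m$ gives exactly~\eqref{eq:freezers2} with the stated error, since the omitted tail is $O(p^{-(m+1)(c+1)})$. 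All implied constants depend only on the $f_i$ because Birch's exponent and the convergence rate depend only on $d$, $R$, $n$ and $\mathfrak B$, not on $\boldsymbol\nu$ or $p$.

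The main obstacle is establishing the \emph{uniformity in $\boldsymbol\nu$} of the bound on the nontrivial exponential sums: a priori Birch's Weyl-differencing estimate is stated for the homogeneous sum $S_{\b a,q}$ and is already independent of $\boldsymbol\nu$ (the $\boldsymbol\nu$-dependence sits only in the harmless phase $\mathrm e(-\langle\b a,\boldsymbol\nu\rangle/q)$ of absolute value $1$), so the real work is simply to quote~\cite[Lemma 5.4]{MR0150129} with the exponent recorded in~\eqref{eq:blindleadingtheblind}/Definition~\ref{def:birch} and to check that Birch's minor-arc bound, summed over prime-power moduli, is a convergent geometric series — this is essentially the content of~\cite[Equation (20), page 256]{MR0150129} already invoked in Lemma~\ref{corl:uperbnd}. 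A secondary point is to verify that the $p$-adic density is insensitive to whether one parametrises by $f_j(\b t)=\nu_j$ exactly or up to units, but since we fix $\boldsymbol\nu\in\Z^R$ this does not arise. Given these, the argument is a routine specialisation of~\cite{MR0150129}, and I would present it as such, referring to Lemma~\ref{lem:cramer} for the character-sum identity and to Birch for the bounds.
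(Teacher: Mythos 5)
Your proposal takes essentially the same route as the paper: both expand the count via additive characters indexed by the exact denominator $p^r$ (the last displayed formula on p.\ 259 of Birch, i.e.\ the identity underlying Lemma~\ref{lem:cramer}), invoke Birch's minor-arc bound on $S_{\b a,p^r}$ from~\cite[Eq.~(19), p.~256]{MR0150129} uniformly in $\b a$ and $\boldsymbol\nu$, and sum the resulting geometric series in $r$ to get existence of the limit and the two error estimates. The paper simply quotes the decomposition directly and reads off \eqref{eq:freezers1} as the $m=0$ case of \eqref{eq:freezers2}, whereas you re-derive the orthogonality step and observe that \eqref{eq:freezers1} follows from the leading term being the $r=0$ contribution; these are cosmetic differences.
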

    \begin{proof} By the last displayed equation 
    in~\cite[page 259]{MR0150129} the fraction in the limit equals  
    $$ 
    \sum_{r=0}^m p^{-r(n+1) }
    \sum_{\substack{ \b a \in (\Z/p^r\Z)^R
    \\p\nmid \b a }}
    \mathrm e \l(-p^{-r} \sum_i a_i \nu_i\r)
    \sum_{\b x \in (\Z/p^r\Z)^{n+1} }
    \mathrm e \l(p^{-r} \sum_i a_i f_i(\b x )\r).
    $$ 
    Using~\cite[page 256, Equation (19)]{MR0150129}
    shows that the sum over $\b x $ is $\ll p^{r(n-R-c)}$. 
    Therefore, the overall expression is 
    $$\ll\sum_{r=0}^m p^{r(-1-c)},$$ which therefore converges 
    absolutely as $m\to \infty$. This proves that the limit converges. 
The difference has modulus 
$  \ll  p^{-(m+1) (c+1)}$, hence proving~\eqref{eq:freezers2}. 
For $m=0$ it  proves~\eqref{eq:freezers1}.   \end{proof}

For a prime $p$ let $\mu_p$ 
denote the standard $p$-adic Haar measure.
\begin{lemma} 
\label{lem:mitsotakigamiesai2}
For any prime $p$, any $i=1,\ldots, R$
and any $k\in \N$ we have 
$$\mu_p\{\b x \in \Z_p^{n+1} \colon p^k \mid f_i(\b x ) \} \ll p^{-k} 
,$$ where the implied constant 
depends only on $f_i$. 
\end{lemma}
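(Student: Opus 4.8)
The plan is to reduce the statement to a one-dimensional count by slicing $\Z_p^{n+1}$ along the variable in which $f_i$ is, in a suitable sense, non-degenerate, and then to bound the measure of that slice by a Hensel/Weierstrass-type argument. Since $f_i$ is a form satisfying Definition~\ref{def:birch}, the polynomial $f_i$ is not identically zero, so there is a coordinate, say $x_0$ after a permutation, such that $f_i(x_0,\x')$ is a non-zero polynomial in $x_0$ for generic $\x' \in \Z_p^n$. Concretely, fix $\x' \in \Z_p^n$ and consider $g_{\x'}(x_0) := f_i(x_0,\x') \in \Z_p[x_0]$; this is a polynomial of degree at most $d$. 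The key point is that for $\x'$ outside a proper $p$-adic-closed subset (of measure zero), $g_{\x'}$ is not the zero polynomial, and in fact one can choose the distinguished coordinate so that the leading coefficient of $g_{\x'}$ in $x_0$ is a \emph{non-zero constant} (independent of $\x'$): write $f_i = c\, x_0^{e} + (\text{lower order in } x_0)$ where $c \in \Z \setminus\{0\}$ and $e \le d$. Then for every $\x'$, $g_{\x'}$ is a polynomial of degree exactly $e$ with a fixed leading coefficient $c$.

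Next I would apply the standard estimate: if $g(T) \in \Z_p[T]$ has degree $e$ and leading coefficient a $p$-adic unit times $p^{v_p(c)}$, then $\#\{t \bmod p^{k} : g(t) \equiv 0 \bmod{p^{k}}\} \ll_{e, v_p(c)} p^{k(1 - 1/e)}$ for $k$ large, and hence $\mu_p\{t \in \Z_p : p^k \mid g(t)\} \ll p^{-k/e} \le p^{-k/d}$ — but this only gives the weaker bound $p^{-k/d}$, not $p^{-k}$. To recover the full $p^{-k}$ I would instead not fix the leading coefficient but argue via a different decomposition: split $\Z_p^{n+1}$ according to $j := v_p(f_i(\x))$, and use that $f_i$ is a form of degree $d$ to rescale. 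Actually the cleaner route, and the one I would commit to, is this: use that $f_i$ is \emph{primitive} as a polynomial (replace $f_i$ by $f_i / \gcd(\text{coeffs})$ and absorb the constant, which only changes the implied constant), so $f_i \bmod p$ is a non-zero form in $\F_p[x_0,\dots,x_n]$. Then the hypersurface $\{f_i \equiv 0\} \subset \AA^{n+1}_{\F_p}$ has codimension $1$, so $\#\{\x \bmod p : f_i(\x) \equiv 0\} \le d\, p^{n}$ (a form of degree $d$ vanishes on at most $d\,p^n$ points of $\F_p^{n+1}$, by Schwartz–Zippel / counting on each line). Iterating this one prime-power at a time, combined with Hensel-type lifting for the smooth locus, gives $\mu_p\{\x \in \Z_p^{n+1} : p^k \mid f_i(\x)\} \ll p^{-k}$: at each step, the set of $\x \bmod p^{m+1}$ with $p^{m+1} \mid f_i(\x)$ that lie above a fixed residue $\x_0 \bmod p^m$ with $p^m \mid f_i(\x_0)$ has relative density $\le (\text{number of solutions of a linear-or-trivial congruence mod } p)/p^{n+1}$, and either $\nabla f_i(\x_0) \not\equiv 0 \bmod p$, in which case this relative density is exactly $p^{-1}$ (the congruence $p^{m+1} \mid f_i(\x_0 + p^m \y)$ becomes linear and non-degenerate in $\y \bmod p$), or $\x_0$ lies on the singular locus, which has codimension $\ge 1$ in the already-codimension-$1$ hypersurface and contributes negligibly.

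The main obstacle is precisely the exceptional set where $\nabla f_i$ vanishes modulo $p$: one must check that these "bad" residues do not accumulate enough to spoil the $p^{-k}$ bound. I would handle this by a uniform geometric input — namely that $\{f_i = 0, \nabla f_i = 0\}$ has codimension $\ge 2$ in $\AA^{n+1}$ over $\bar{\Q}$, which follows from genericity (it is contained in the locus of Definition~\ref{def:birch} with $R=1$, whose codimension $\mathfrak B$ exceeds $2^{d-1}(d-1) \ge 1$, hence is $\ge 2$) — together with the Lang–Weil / Ekedahl estimate $\#\{\x \bmod p : f_i(\x) \equiv 0, \nabla f_i(\x) \equiv 0\} \ll p^{n-1}$ uniformly in $p$. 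Feeding this into the inductive counting, the contribution of bad residues at level $m$ is $O(p^{m(n+1) - (m/\cdots)})$ — summably small — so the dominant behaviour is governed by the smooth locus, yielding the clean bound $\mu_p\{\x \in \Z_p^{n+1} : p^k \mid f_i(\x)\} \ll_{f_i} p^{-k}$ with an implied constant depending only on $f_i$ (through $d$, $n$, and the geometry of the singular locus, all of which are bounded uniformly in $p$).
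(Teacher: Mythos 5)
Your proposal takes a genuinely different route from the paper. The paper's proof is short because it piggybacks on Lemma~\ref{lem:freezers}: it partitions $\{\b x \bmod p^k : p^k \mid f_i(\b x)\}$ according to the full value vector $\boldsymbol\nu = (f_1(\b x),\dots,f_R(\b x)) \bmod p^k$ with $\nu_i=0$, and uses the uniform estimates $\sigma_p(\boldsymbol\nu)=1+O(p^{-1-c})$ and the truncation bound~\eqref{eq:freezers2}, both of which are consequences of Birch's exponential-sum work and the rank condition of Definition~\ref{def:birch}. The count over the $p^{k(R-1)}$ admissible $\boldsymbol\nu$ then immediately gives $O(p^{-k})$. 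You instead try to prove the bound from scratch by slicing into residue classes mod $p$ and iterating Hensel lifting, with Lang--Weil controlling the singular residues.

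The gap in your argument is exactly at the point you flag as ``the main obstacle,'' and it is not resolved. The smooth part is fine: there are $\le d\,p^n$ smooth zeros mod $p$, and each contributes $p^{n(k-1)}$ lifts mod $p^k$, giving $O(p^{nk})=O(p^{-k}\cdot p^{k(n+1)})$. But for the $O(p^{n-1})$ singular residues $\b x_0$, the trivial bound allows \emph{all} $p^{(n+1)(k-1)}$ lifts, which contributes $O(p^{k(n+1)-n-2})$ — this beats $p^{kn}$ only when $k\le n+2$, not for large $k$. Your inductive step asserts that ``the contribution of bad residues at level $m$ is $O(p^{m(n+1)-(m/\cdots)})$ — summably small,'' with the exponent literally left as ``$\cdots$.'' This is precisely where the work lies: at a singular residue one cannot conclude $p^{-1}$ relative density from a linear congruence, and one has to set up a genuine recursion (say, stratifying by $j=v_p(\nabla f_i(\b x))$ and using a quantitative Hensel estimate for $k>2j$, plus a nested bound for $\mu_p\{v_p(\nabla f_i)\ge j\}$ — which is itself a statement of the same type as the one being proved, for the system of partial derivatives). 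That can be made to work, but it is a non-trivial piece of $p$-adic analysis and needs to be carried out; as sketched the claim is unsupported. The paper avoids this entirely: the Birch singular-series estimates (Lemma~\ref{lem:freezers}, ultimately~\cite[p.\,256, Eq.\,(19)]{MR0150129}) already encode the needed uniformity across all residue levels, so Lemma~\ref{lem:mitsotakigamiesai2} becomes a two-line arithmetic consequence rather than an independent $p$-adic density computation.

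One smaller point: your first attempt (Weierstrass/one-variable slicing) indeed cannot give better than $p^{-k/d}$ in the worst case, and you correctly abandon it; but notice it already gives $p^{-k}$ when the one-variable slices have distinct roots mod $p$, which is another hint that the whole difficulty of the lemma is concentrated on the singular/coincident locus that your second attempt leaves unquantified.
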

\begin{proof}
We have 
\begin{align*}
\mu_p\{\b x \in \Z_p^{n+1} \colon p^k \mid f_i(\b x ) \}&=
\frac{\#\{\b x \in (\Z/p^k \Z)^{n+1}: f_i(\b x) =0 \}}{p^{k(n+1)}}\\
&=p^{-kR}
\sum_{\substack{ \boldsymbol \nu \in (\Z/p^k\Z)^R \\  \nu_i=0 } }
\frac{\#\{\b x\in (\Z/p\Z)^{n+1} \colon 
(f_1(\b x ),\ldots, f_R(\b x) )=\boldsymbol \nu \}}{p^{k(n+1-R)}}.\end{align*}
By~\eqref{eq:freezers2} 
this becomes
$$   O\left(p^{-(k+1)(c+1)}\right) +p^{-kR}
\sum_{\substack{ \boldsymbol \nu \in (\Z/p^k\Z)^R \\  \nu_i=0 } }\sigma_p(\boldsymbol \nu ) $$ 
and then by~\eqref{eq:freezers1}
this is
\[
O\left(p^{-(k+1)(c+1)}\right)  +p^{-kR}
\sum_{\substack{ \boldsymbol \nu \in (\Z/p^k\Z)^R \\  \nu_i=0 } }1 =
O\left(p^{-(k+1)(c+1)}\right) 
 +p^{-k}
 \ll p^{-k}. \qedhere
\]
\end{proof}

\section{Proof of the asymptotic}
\label{s:proofproofproof}
We now give the proof of Theorem~\ref{thm:mainthrm} by applying the circle method result 
in Theorem~\ref{lem:vachms}. Before doing so, we show in \S\ref{s:preparegrounds}
that the values of the polynomials $f_i(\b x )$
lie the  special arithmetic progressions that satisfy the assumption~\eqref{eq:padrona}
of Lemma~\ref{lem:adrianosyria}. We then prove Theorem~\ref{thm:mainthrm} 
in \S\ref{s:activation} subject to Lemma~\ref{lem:ozzy}  which states that 
the entities $\gamma(\b s )$ appearing in the leading constant are well-defined.
The proof of 
Lemma~\ref{lem:ozzy} is not straightforward;
it is given in the separate section~\ref{s:prfoz}.

\subsection{Preparing the grounds} \label{s:preparegrounds}
Recall the definition of $N(B)$ in~\eqref{def:countingfunction}.
We can write  
  \[ N(B)=\frac{1}{2}
\# \left\{  \b x  \in 
\left(\Z\cap \left[-B^{\frac{1}{n+1}},B^{\frac{1}{n+1}}\right]\right)^{n+1}
\setminus\{\b 0\}:
\gcd(x_0,\ldots,x_n)=1, 
f_1(\b x) \cdots f_R(\b x) \in \mathcal{N}_D\right\},\] where 
 $\c N_D$  is  as in~\eqref{def:norms}.
By M\"obius inversion this   becomes 
$  \frac{1}{2} \sum_{k\geq 1 }\mu(k)
N_0(B^{1/(n+1)}/k)$,where 
$$N_0(P)=\# \left\{ 
\b x \in (\Z\cap [-P,P])^{n+1}\setminus\{\b 0 \}:
\begin{array}{l}
(D, f_1(\b x ) \cdots f_R(\b x ) )_\R=1,\\
(D, f_1(\b x ) \cdots f_R(\b x ) )_{\Q_p}=1
\ \forall p\,\,   \rm{ prime}
\end{array}
\right\}.$$
The bound
$N_0(P) \ll P^{n+1}$ shows that the 
contribution of $k>(\log B)^{2R/n}$
is $$ \ll \sum_{k>(\log B)^{2R/n} }
\frac{B }{k^{n+1}} \ll \frac{B }{(\log B)^{2R}}
\leq   \frac{B }{(\log B)^{1+R/2}},$$ hence, 
 \beq{eq:reni}
{N( B)=\frac{1}{2} 
\sum_{1\leq k \leq (\log B)^{2R/n}  }\mu(k)
N_0\l(\frac{B^{\frac{1}{n+1}}}{k} \r)
+O\l( \frac{B}{(\log B)^{1+R/2}}\r).} 
As this point we should use  
the circle method tool, namely
Theorem~\ref{lem:vachms}. However, 
since this only applies to 
specific arithmetic progressions,
we first show that the remaining 
progressions can be ignored 
up to a negligible error term.

We begin by showing 
that for 
most $\b x $ in $N_0(P)$
the integers 
$f_1(\b x ),\ldots, f_R(\b x )$ 
are relatively coprime 
with respect to large primes.

\begin{lemma}
\label{lem:codimY}
Fix any $i\neq j $ among the integers in $\{1,\dots,R\}$.
Then  for all $P, z>1$  we have $$\# \left\{ 
\mathbf{x} \in (\Z\cap [-P,P])^{n+1}
:  \begin{array}{l}
f_1(\b x) \cdots f_R(\b x) \in \c N_D, 
\\ \exists   p>z :
 p\mid (f_i(x), f_j(x) )
\end{array} \right\}\ll 
\frac{1}{z (\log z)}  
\frac{P^{n+1}}{(\log P)^{R/2}}
+\frac{P^{n+1/2}}{(\log P)^{1/2}}
,$$ where the implied constant 
only depends on $D,R$ and the $f_i$.
\end{lemma}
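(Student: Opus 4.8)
The plan is to bound the count by summing over the prime $p > z$ that divides $\gcd(f_i(\b x), f_j(\b x))$, and for each such $p$ fiber over the residue class of $\b x$ modulo $p$. Write $Z_p \subset \F_p^{n+1}$ for the common zero locus of $f_i$ and $f_j$ in characteristic $p$; by genericity (Definition~\ref{def:birch}), for all but finitely many $p$ this is a codimension-$2$ subvariety, so $\#Z_p \ll p^{n-1}$ with an implied constant depending only on the $f_i$. The contribution from the finitely many bad primes is absorbed into the error term (for those $p \le z$ anyway once $z$ is large, and a fixed finite set contributes $O(P^{n-1/2})$ at worst). So the main term is
\[
\sum_{z < p \le C P^{d/2}} \ \sum_{\substack{\b a \in \F_p^{n+1} \\ \b a \in Z_p}} \#\{\b x \in (\Z \cap [-P,P])^{n+1} : \b x \equiv \b a \md p, \ f_1(\b x)\cdots f_R(\b x) \in \c N_D\},
\]
where the upper cutoff $CP^{d/2}$ comes from the fact that $p \mid f_i(\b x)$ and $|f_i(\b x)| \ll P^d$ force $p^2 \le |f_i(\b x) f_j(\b x)|^{?}$—more precisely $p \mid f_i(\b x)$ and, since $f_i(\b x)\cdots f_R(\b x) \in \c N_D$ with $\left(\frac{D}{p}\right) = -1$ possible, we get a square-ish constraint; in any case $p \ll P^{d/2}$ on the diagonal part and the tail $p > P^{d/2}$ is handled trivially by $\#Z_p \ll p^{n-1}$ giving $\ll \sum_{p > P^{d/2}} p^{n-1} (P/p)^{n+1} \ll P^{n+1} \sum_{p > P^{d/2}} p^{-2} \ll P^{n+1-d/2}$, which is within the stated error.

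For the inner count, after translating $\b x = \b a + p\b y$ with $\b y$ ranging over a box of side $\ll P/p$, I would apply Lemma~\ref{lem:shiuconsapplicsaser}: the condition $f_1(\b x)\cdots f_R(\b x) \in \c N_D$ is a condition on the product of the $R$ integers $f_1(\b x), \dots, f_R(\b x)$, each of size $\ll P^d$, and parametrising by these values (as in Lemma~\ref{lem:shiucuwer6}, noting $p \mid f_i(\b x)$ can be folded in as a congruence on one coordinate) gives a bound $\ll (P/p)^{n+1 - Rd} \cdot \frac{(P^d)^R}{(\log P)^{R/2}} = \frac{(P/p)^{n+1}}{(\log P)^{R/2}} \cdot p^{Rd - ?}$—here I must be careful: the cleaner route is to bound the inner count directly by $\ll \frac{1}{(\log P)^{R/2}}(P/p)^{n+1}$ times a harmless constant, invoking that on the box of side $P/p$ the values $f_\ell(\b x)$ still range over $\ll (P)^d$ values but the norm condition saves $(\log P)^{R/2}$ uniformly by Lemma~\ref{lem:shiuconsapplicsaser} applied with $N_\ell \asymp P^d$ (composition of the two polynomial-values-to-$\Z^R$ reduction and the large sieve lemma). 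Summing over $\b a \in Z_p$ contributes a factor $\#Z_p \ll p^{n-1}$, so the $p$-summand is $\ll p^{n-1} \cdot \frac{(P/p)^{n+1}}{(\log P)^{R/2}} = \frac{P^{n+1}}{(\log P)^{R/2}} \cdot p^{-2}$; summing $\sum_{p > z} p^{-2} \ll \frac{1}{z \log z}$ then yields exactly the first term of the claimed bound.

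The main obstacle is getting the $(\log P)^{R/2}$ saving to survive the fibering: one cannot simply apply the large sieve on the small box of side $P/p$ because that box is too small to see $(\log P)^{R/2}$ cancellation when $p$ is close to $P^{d/2}$. The fix is to not fiber first—instead parametrise the whole original count by the tuple of values $(n_1, \dots, n_R) = (f_1(\b x), \dots, f_R(\b x)) \in \Z^R$ with $|n_\ell| \ll P^d$, using the uniform Birch-type bound (Lemma~\ref{corl:uperbnd}) for $\#\{\b x : f_\ell(\b x) = n_\ell \ \forall \ell\} \ll P^{n+1-Rd}$, then impose $n_1 \cdots n_R \in \c N_D$ \emph{and} $p \mid \gcd(n_i, n_j)$ for some $p > z$; the large sieve over $\Z^R$ with the extra divisibility (Lemma~\ref{lem:shiucuwer6}, summed over the prime $p$ dividing $n_i$, using $\sum_{p>z} p^{-1}\cdot(\text{contribution})$) then delivers the bound with $N_\ell \asymp P^d$ so that $\log N_\ell \asymp \log P$ throughout. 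The secondary term $P^{n+1/2}/(\log P)^{1/2}$ in the statement is the standard "lower-order box" leftover from the difference between $\#\{\b x : f_\ell(\b x) = n_\ell\} \ll P^{n+1-Rd}\mathfrak{S}\Psi + P^{n+1-Rd-\delta}$ and collecting the error terms; I would track it exactly as in Lemma~\ref{corl:uperbnd}.
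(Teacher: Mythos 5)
Your final approach is the paper's: bound via Lemma~\ref{corl:uperbnd} by a count over $\Z^R$, then apply the large sieve (Lemma~\ref{lem:shiuconsapplicsaser}) to get the $(\log P)^{R/2}$ saving. But the execution has genuine gaps.

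First, the divisibility $p\mid n_i$ and $p\mid n_j$ is not handled by Lemma~\ref{lem:shiucuwer6} (which imposes $p^{2\alpha}\mid n_1$ on a \emph{single} coordinate), and the phrase ``using $\sum_{p>z}p^{-1}\cdot(\text{contribution})$'' is alarming since $\sum_{p>z}p^{-1}$ diverges. What the paper does is a change of variables $n_i=pn_i'$, $n_j=pn_j'$, which is licit because $p^2\in\c N_D$ so the norm condition is preserved; this shrinks the two boxes to $N_i=N_j=bP^d/p$ and the resulting application of Lemma~\ref{lem:shiuconsapplicsaser} yields the $p^{-2}$ saving, whence $\sum_{p>z}p^{-2}\ll 1/(z\log z)$.

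Second, your claim that ``$N_\ell\asymp P^d$ so that $\log N_\ell\asymp\log P$ throughout'' is precisely what \emph{fails} once the change of variables is made: two of the boxes have size $P^d/p$, which can degenerate. The paper therefore splits the prime sum at $p=\sqrt P$. For $z<p\le\sqrt P$ one has $\min_\ell\log N_\ell\gg\log P$ and the sieve applies; for $p>\sqrt P$ one uses the trivial bound $\ll P^{2d}/p^2$ for the pairs $(n_i,n_j)$ (excluding $n_in_j=0$), and this tail is exactly what produces the secondary error term $P^{n+1/2}/(\log P)^{1/2}$. Your attribution of this term to the Birch error $P^{n+1-Rd-\delta}$ is incorrect: that error contributes only $\ll P^{n+1-\delta}$ after summing over $\boldsymbol\nu$, which is negligible by comparison. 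So: add the change of variables, add the split at $\sqrt P$, and re-derive both error terms from those two steps.
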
\begin{proof} 
By Lemma~\ref{corl:uperbnd}
with   $\c A=\{\b m\in \Z^{R}: 
m_1\cdots m_R \in \c N_D \}$ 
we obtain the  bound  
$$ \ll P^{n+1-d}+
P^{n+1-Rd} \sum_{1\leq i<j \leq R}
\sum_{p>z}\# \left\{ 
\mathbf{m} \in (\Z\setminus\{0\})^{R}
:  \begin{array}{l}
m_1 \cdots m_R \in \c  N_D,\
|m_i| \leq b P^d,\\
 p\mid (m_i, m_j )
\end{array} \right\},$$
where  $P^{n+1-d}$ 
comes from the  cases where 
$f_1(\b x ) \cdots f_R(\b x )=0$.
The contribution of $p\geq \sqrt P$ 
is     $$ \ll P^{n+1-Rd} \sum_{p\geq \sqrt P } 
P^{d(R-2)} \frac{P^{2d} }{p^2} \ll \frac{ P^{n+1/2}}{\log P} .$$ Here we used the bound $O(P^{2d}/p^2)$,
rather than $O(1+P^{2d}/p^2)$, as we have excluded 
the cases where $m_im_j=0$.
If $z\geq \sqrt P$ there are no more primes in the
sum and the bounds so far are satisfactory.
If $z<\sqrt P$ then  for the primes $p\in (z,\sqrt P]$ 
 a change of variables gives  
$$ \# \left\{ 
\mathbf{m} \in (\Z\setminus\{0\})^{R}
:  \begin{array}{l}
m_1 \cdots m_R \in \c  N_D,\\
|m_i| \leq b P^d, 
p\mid (m_i, m_j )
\end{array}\right\} 
\leq  \# \left\{ 
\mathbf{m} \in \Z^{R}
:    m_1 \cdots m_R \in \c N_D,
|m_\ell | \leq P_\ell \  \forall \ell 
\right\} ,$$ where 
$P_\ell = b P^d$ for $\ell\notin\{i,j\}$
and  $P_i=P_j=b P^d/p$. 
By Lemma~\ref{lem:shiuconsapplicsaser}
with $N_\ell=P_\ell$
we get   $$
\# \left\{ 
\mathbf{m} \in \Z^{R}
:    m_1 \cdots m_R \in \c N_D,
|m_\ell | \leq P_\ell \  \forall \ell 
\right\} \ll \frac{P^{dR}}{p^2(\log P)^{R/2}}$$
because the inequality $p\leq \sqrt P$
ensures that 
$\min \log P_\ell \gg \log P$.
This concludes the proof
by noting that $\sum_{p>z}p^{-2} \ll 1/(z\log z)$.
\end{proof}

We next show  
that for 
most $\b x $   in $N_0(P)$
the integers 
$f_i(\b x )$
are not divisible by 
a large prime  power.
\begin{lemma}
\label{lem:vinci_Gismondo}
For all $P>1$, all positive 
  integers $\alpha$
and   primes $p$ with $p^{2\alpha}\leq P^{1/2}$
we have $$
\# \left\{ 
\mathbf{x} \in (\Z\cap [-P,P])^{n+1}
:  
\begin{array}{l}
f_1(\b x) \cdots f_R(\b x) \in \c N_D,\\
\exists i \colon
f_i(\b x ) \equiv 0 \md{p^{2\alpha}}
\end{array}
\right\}\ll \frac{1}{p^{2\alpha}} 
\frac{P^{n+1}}{(\log P)^{R/2}},$$ where the implied constant 
only depends on $D,R$ and the $f_i$.  
\end{lemma}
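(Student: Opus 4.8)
The plan is to reuse verbatim the structure of the proof of Lemma~\ref{lem:codimY}: apply the uniform circle-method upper bound (Lemma~\ref{corl:uperbnd}) to replace the sum over $\b x\in\Z^{n+1}$ by a sum of an arithmetic function over $\b m\in\Z^R$, and then estimate the resulting quantity with the large-sieve input of Lemma~\ref{lem:shiucuwer6}.

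First I would split the count according to the sign pattern $\b s\in\{-1,1\}^R$ of $(f_1(\b x),\dots,f_R(\b x))$, treating separately — and discarding as $O(P^{n+1-d})$, exactly as in the proof of Lemma~\ref{lem:codimY} — the tuples $\b x$ with $\prod_i f_i(\b x)=0$. For a fixed $\b s$ I apply Lemma~\ref{corl:uperbnd} with $s_i$ the coordinates of $\b s$ and with $k_{\b s}(\boldsymbol\nu)=\mathds 1\!\big((s_1\cdots s_R)\nu_1\cdots\nu_R\in\c N_D,\ \exists i:p^{2\alpha}\mid\nu_i\big)$; since $p^{2\alpha}\mid f_i(\b x)\iff p^{2\alpha}\mid s_if_i(\b x)$ and $\prod_i f_i(\b x)=(s_1\cdots s_R)\prod_i s_if_i(\b x)$, this $k_{\b s}$ detects exactly the event in question. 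Summing over the $2^R$ sign patterns, the left-hand side of the statement is
\[
\ll P^{n+1-d}+P^{n+1-Rd}\sum_{\b s\in\{-1,1\}^R}\#\Big\{\boldsymbol\nu\in\N^R\cap[1,bP^d]^R:\ (s_1\cdots s_R)\textstyle\prod_i\nu_i\in\c N_D,\ \exists i:p^{2\alpha}\mid\nu_i\Big\}.
\]
Flipping the sign of one coordinate when $s_1\cdots s_R=-1$, each inner cardinality is at most $\#\{\b m\in\Z^R:\prod_i m_i\in\c N_D,\ |m_\ell|\le bP^d\ \forall\ell,\ \exists i:p^{2\alpha}\mid m_i\}$, which by a union bound over $i$ and relabelling is $\ll\sum_{i=1}^R\#\{\b m\in\Z^R:\prod_i m_i\in\c N_D,\ |m_\ell|\le bP^d\ \forall\ell,\ p^{2\alpha}\mid m_1\}$.

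Next I would apply Lemma~\ref{lem:shiucuwer6} with $N_\ell=bP^d$ for all $\ell$; its hypothesis $p^{2\alpha}\le\sqrt{N_1}$ holds because $p^{2\alpha}\le P^{1/2}\le\sqrt{bP^d}$ once $P$ is large (this is the one place the exponent $1/2$ in the hypothesis and the condition $d\ge 1$ are used; for bounded $P$ the asserted bound is trivial). This gives each summand $\ll P^{Rd}/(p^{2\alpha}(\log P)^{R/2})$, so the main term is $\ll P^{n+1}/(p^{2\alpha}(\log P)^{R/2})$ after absorbing the $2^R$ and $R$ into the implied constant; the leftover $O(P^{n+1-d})$ is also $\ll P^{n+1}/(p^{2\alpha}(\log P)^{R/2})$ since $p^{2\alpha}(\log P)^{R/2}\le P^{1/2}(\log P)^{R/2}=o(P^d)$. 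This completes the argument.

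Because Lemmas~\ref{corl:uperbnd} and~\ref{lem:shiucuwer6} do all the heavy lifting, there is no genuine obstacle here; the proof is essentially bookkeeping. The only points that need a moment's care are (i) tracking how membership in $\c N_D$ interacts with the sign choices forced by Lemma~\ref{corl:uperbnd}, so that the reduction to a count over $\b m\in\Z^R$ is legitimate, and (ii) checking that the hypothesis $p^{2\alpha}\le P^{1/2}$ is exactly strong enough to feed into Lemma~\ref{lem:shiucuwer6} after the dilation $N_\ell=bP^d$.
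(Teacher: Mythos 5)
Your proof is correct and follows the same strategy as the paper's: reduce via Lemma~\ref{corl:uperbnd} to a count over $\b m\in\Z^R$ with $\prod_i m_i\in\c N_D$ and $p^{2\alpha}\mid m_1$, then conclude with Lemma~\ref{lem:shiucuwer6}. The paper's version is much terser (it omits the sign-splitting bookkeeping and the verification of the hypothesis $p^{2\alpha}\leq\sqrt{N_1}$, and simply records the $\prod f_i(\b x)=0$ cases as $O(P^n)$), but the content is identical.
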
\begin{proof} 
By Lemma~\ref{corl:uperbnd}
for a suitable     $b=b(f_i)$ 
and 
 $\c A=\{\b m\in \Z^{R}: m_1\cdots m_R \in \c N_D \}$
we get  $$ \ll P^n+ P^{n+1-Rd}    \# \left\{ 
\mathbf{m} \in (\Z\setminus\{0\})^{R}
:   \begin{array}{l}
m_1 \cdots m_R \in \c  N_D,
\\ p^{2\alpha}\mid m_i, |m_i| \leq b P^d,
\end{array} \right\},$$
where   $P^n$ comes from the 
cases where $f_1(\b x) \cdots f_R(\b x )=0$.
The proof concludes by using Lemma~\ref{lem:shiucuwer6}.
\end{proof}
Putting together the bounds
we come to the following conclusion
regarding $N_0(P)$.
\begin{lemma}
\label{lem:codijhsdcontentomY}
For all $P,z>1$ and 
 positive integers $\alpha$
  satisfying 
 $z^{2\alpha} \leq P^{1/2}$,  
we have  
\begin{align*}
N_0(P)&= 
\# \left\{ 
\b x \in \Z^{n+1} :
\begin{array}{l} 
(D, f_1(\b x ) \cdots f_R(\b x ) )_\R=1,0<\max|x_i| \leq P\\
(D, f_1(\b x ) \cdots f_R(\b x ) )_{\Q_p}=1
\ \forall p\leq z, \\
2\mid  v_p(f_i(x)   )  \ \forall p>z 
 \textrm{ with } (\frac{D}{p})=-1
 \textrm{ and } \forall i,\\
   v_p(f_i(x)   ) < 2\alpha   \ \forall p\leq z 
\end{array}
\right\} \\
&+O\left (
\frac{1}
{ \min \{z\log z, 4^\alpha,
\sqrt {P/(\log P)^R}
\}}
\cdot \frac{ P^{n+1}}{(\log P)^{R/2}}
\right),
\end{align*}
where the implied constant only depends on $D$ and the $f_i$.
\end{lemma}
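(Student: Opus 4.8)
The plan is to regard $N_0(P)$ as $\#A$ and the displayed main term as $\#B$, where $A$ and $B$ are the subsets of $\{\b x\in\Z^{n+1}:0<\max_i|x_i|\le P\}$ cut out by the respective lists of conditions, and to estimate $\#A-\#B$ using Lemmas~\ref{lem:codimY} and~\ref{lem:vinci_Gismondo}. I begin with two reductions. First, the $\b x$ with $f_1(\b x)\cdots f_R(\b x)=0$ lie on a variety of dimension $\le n$, so there are $O_{\b f}(P^n)$ of them; they all lie in $A$ and none in $B$ (the condition $v_p(f_i(\b x))<2\alpha$ for $p\le z$ forces $f_i(\b x)\neq 0$), and $O(P^n)$ is admissible since $P^n\le P^{n+1/2}=\tfrac{P^{n+1}}{(\log P)^{R/2}}\bigl(P/(\log P)^R\bigr)^{-1/2}$. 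Second, for $\b x$ with $\prod_i f_i(\b x)\neq 0$, the Hasse principle for the conic $x_0^2-Dx_1^2=\bigl(\prod_i f_i(\b x)\bigr)x_2^2$, together with the fact that $D$ is not a square, shows that $\b x\in A$ if and only if $\prod_i f_i(\b x)\in\c N_D$ (this is the equivalence implicit in the definition of $N_0(P)$), and in that case, as in the proof of Lemma~\ref{lem:shiuconsapplicsaser}, $v_p\bigl(\prod_i f_i(\b x)\bigr)$ is even for every prime $p$ with $(\tfrac Dp)=-1$.

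Put $z_0:=\max\{p\ \textrm{prime}:p\mid 2D\}$, a constant depending only on $D$. If $z<z_0$ then $z\log z\ll_D 1$, so the claimed error term is $\gg_D P^{n+1}(\log P)^{-R/2}$ and it suffices to bound $\#A$ and $\#B$ separately by $\ll_{D,R,\b f}P^{n+1}(\log P)^{-R/2}$; this follows from Lemma~\ref{corl:uperbnd}, which reduces the count over $\b x$ to a count of $\b m\in\Z^R\cap[1,bP^d]^R$ with $m_1\cdots m_R\in\c N_D$ (for $\#A$) or with each $m_i$ of even valuation at every inert prime $>z$ (for $\#B$), followed by the large sieve estimate of Lemma~\ref{lem:shiuconsapplicsaser}. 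So assume $z\ge z_0$. Then $B\subseteq A$: for $\b x\in B$ we have $\prod_i f_i(\b x)\neq 0$, the real place and the places $p\le z$ are already among those defining $A$, and for $p>z$ one has $p\nmid 2D$, so Lemma~\ref{lem:hilbser} (with first entry the $p$-adic unit $D$) gives $(D,\prod_i f_i(\b x))_{\Q_p}=(\tfrac Dp)^{v_p(\prod_i f_i(\b x))}$, which is $1$ because either $(\tfrac Dp)=1$, or $(\tfrac Dp)=-1$ and then each $v_p(f_i(\b x))$, hence also $v_p(\prod_i f_i(\b x))$, is even.

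It remains to bound $\#(A\setminus B)=\#A-\#B$. Suppose $\b x\in A$, $\prod_i f_i(\b x)\neq 0$, and $\b x\notin B$; since all local conditions hold at $\b x$, either (a) some prime $p>z$ with $(\tfrac Dp)=-1$ has $v_p(f_i(\b x))$ odd for some $i$, or (b) some prime $p\le z$ has $p^{2\alpha}\mid f_i(\b x)$ for some $i$. In case (a), since $v_p(\prod_j f_j(\b x))$ is even there is a second index $j\neq i$ with $v_p(f_j(\b x))$ odd, so $p\mid\gcd(f_i(\b x),f_j(\b x))$ with $p>z$; summing Lemma~\ref{lem:codimY} over the $\binom R2$ pairs bounds these $\b x$ by $\ll_{D,R,\b f}\tfrac{P^{n+1}}{z\log z\,(\log P)^{R/2}}+\tfrac{P^{n+1/2}}{(\log P)^{1/2}}$. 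In case (b), $z^{2\alpha}\le P^{1/2}$ gives $p^{2\alpha}\le P^{1/2}$, so Lemma~\ref{lem:vinci_Gismondo} applies for each $p\le z$; summing over these primes and using $\sum_p p^{-2\alpha}\le 3\cdot4^{-\alpha}$ (cf.\ Remark~\ref{rem:billakos evans}) bounds these $\b x$ by $\ll_{D,R,\b f}4^{-\alpha}P^{n+1}(\log P)^{-R/2}$. Adding the $O(P^n)$ zero-locus points, using $P^n+\tfrac{P^{n+1/2}}{(\log P)^{1/2}}\le 2\tfrac{P^{n+1}}{(\log P)^{R/2}}\bigl(P/(\log P)^R\bigr)^{-1/2}$, and collecting the three saving factors yields the stated bound. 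The one subtle point is that the conditions defining $B$ say nothing about $(D,\prod_i f_i(\b x))_{\Q_p}$ at the ramified primes $p\mid D$; this is precisely why one first passes to $z\ge z_0$, where such primes lie below $z$ and are handled directly, and where $1/(z\log z)$ and $\sum_p p^{-2\alpha}$ are small enough for every error contribution to fit under the claimed bound.
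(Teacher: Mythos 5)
Your proof is correct and takes essentially the same approach as the paper: both rely on Lemma~\ref{lem:codimY} to control pairs $f_i,f_j$ sharing a large prime and on Lemma~\ref{lem:vinci_Gismondo} to control large prime powers, combined with the Hilbert symbol formula from Lemma~\ref{lem:hilbser} at primes $p>z$ coprime to $2D$. Your symmetric-difference framing (showing $B\subseteq A$ for $z$ above the largest prime dividing $2D$ and bounding $\#(A\setminus B)$ directly) is a slight reorganization of the paper's restrict-then-unrestrict argument, and is somewhat more explicit about the zero locus and the small-$z$ regime, which the paper treats implicitly.
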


\begin{proof} 
By Lemma~\ref{lem:vinci_Gismondo}
the terms $\b x $ failing 
 $v_p(f_i(x)   ) < 2\alpha $ for all
$p\leq z $  have cardinality 
$$\ll \frac{P^{n+1} }{(\log P)^{R/2}} \sum_{p\leq z} p^{-2\alpha}
\ll
\frac{P^{n+1} }{4^\alpha (\log P)^{R/2}} 
$$ by~\eqref{eq:adriano in siris by pergolesi}. By Lemma~\ref{lem:codimY} we can restrict attention 
to 
those $\b x $ in $N_0(P)$
for which there is no prime $p>z$
dividing both $f_i(\b x), f_j(\b x )$
for some $i\neq j$ as long as we introduce 
a negligible error term. 
Hence, for the remaining $\b x $
and for   $p>z$ with $(\frac{D}{p})=-1$
we have 
$(D, f_1(\b x ) \cdots f_R(\b x ) )_{\Q_p}=1$
equivalently when 
$2\mid v_p(f_1(\b x ) \cdots f_R(\b x ))$,
which is  
equivalent to 
$2\mid v_p(f_i(\b x ) )$ for all $i$.
 Thus, $N_0(P)$ equals  
 $$ 
\# \left\{ 
\b x \in \Z^{n+1} :
\begin{array}{l} 
(D, f_1(\b x ) \cdots f_R(\b x ) )_\R=1,0<\max|x_i| \leq P
\\
(D, f_1(\b x ) \cdots f_R(\b x ) )_{\Q_p}=1
\ \forall p\leq z, \\
2\mid  v_p(f_i(x)   )  \ \forall p>z 
 \textrm{ with } (\frac{D}{p})=-1
 \textrm{ and } \forall i
 \\ p>z, i\neq j \Rightarrow p\nmid (f_i(\b x), f_j(\b x ) )
 ,\\    v_p(f_i(x)   ) < 2\alpha   \ \forall p\leq z 
\end{array}
\right\}
$$
 up to
 a negligible error term. 
Lastly, 
the condition $p\nmid (f_i(\b x), f_j(\b x ) )$
can be dispensed with 
by alluding  once again 
to  Lemma~\ref{lem:codimY}.
\end{proof}  

 \subsection{Application of  the circle method tool} \label{s:activation}
Write the right-hand side of 
Lemma~\ref{lem:codijhsdcontentomY}    as 
\beq{bwv532}{\sum_{\substack{ s_1,\ldots, s_R\in \{1,-1\} \\ (D, s_1 \cdots s_R)_\R=1 }}
\# \left\{ 
\b x \in \Z^{n+1} :
\begin{array}{l} 
s_if_i(\b x ) >0,
0<|x_i| \leq P\ \forall i,\\
(D, f_1(\b x ) \cdots f_R(\b x ) )_{\Q_p}=1
\ \forall p\leq z, \\
2\mid  v_p(f_i(x)   )  \ \forall p>z 
 \textrm{ with } (\frac{D}{p})=-1
 \textrm{ and } \forall i,\\
   v_p(f_i(x)   ) < 2\alpha   \ \forall p\leq z 
\end{array}
\right\}
.}Set $m_i=s_i f_i(\b x )$
so that the new cardinality equals 
$$
\sum_{\substack{ 
\b x \in \Z^{n+1} \cap [-P,P],
\\ \min_j s_jf_j(\b x ) >0 
}} k(s_1 f_1(\b x ),\ldots, s_R f_R(\b x )),
$$ where $k:\N^R\to \{0,1\}$ is defined as the indicator of the 
event 
$   v_p( m_i  ) < 2\alpha$ for all $p\leq z 
$ intersected with  
$$(D, s_1\cdots s_R
m_1 \cdots m_R
)_{\Q_p}=1
\ \forall p\leq z,  \quad  
2\mid  v_p( m_i   )  \ \forall p>z 
 \textrm{ with } \l (\frac{D}{p} \r )=-1
 \textrm{ and for all } i
.$$
We are now ready to employ Theorem~\ref{lem:vachms},
for which we take $z$ to be an integer function of $P$
that will be defined later and we set
$m_p(z)=z$ in Definition~\ref{def:wzez}
and  $\alpha =[(z-3)/(2R)]$. 
Let 
$$\rho(\b a, W_z) 
=
\mathds 1(\b a )
 \l(
\frac{2\gamma_0}{\sqrt \pi W_z}\r)^R
\prod_{p\leq z}\l(1-\frac{1}{p} \r)^{-R/2}
,$$ where $\mathds 1 $
is the indicator of 
$$ v_p( a_i  ) < 2\l [\frac{z-3}{2R}\r]  \textrm{ and }
(D,s_1\cdots s_R a_1\cdots a_R)_{\Q_p}=1
\  \forall p\leq z, \quad 
\l(\frac{D}{a_i/\gcd(a_i,W_z)}\r)=1 \ 
\forall i 
.$$By injecting
Lemma~\ref{lem:adrianosyria}
into 
Theorem~\ref{lem:vachms}
we infer that~\eqref{bwv532} equals 
$$
P^{n+1} 
 \l(
\frac{2\gamma_0 }{\sqrt \pi  }\r)^R
\prod_{p\leq z}\l(1-\frac{1}{p} \r)^{-R/2}
\sum_{\substack{ s_1,\ldots, s_R\in \{1,-1\} \\ (D, s_1 \cdots s_R)_\R=1 }}  H_\infty(\b s) 
\widehat{\gamma}_z(\b s )
 +\c R P^{n+1} 
, $$
 where 
 $$\c R\ll \frac{\|k\|_1}{P^{Rd}}
 (P^{-\delta} +\widetilde{\epsilon}(z)+z^{-c})
 + \frac{1  }{(\log P)^{1/4+R/2}   }
 , \quad    H_\infty(\b s)= 
 \hspace{-0.3cm}
 \int\limits_{\substack{
|\b t |\leq 1 \\ 
\min_j s_j f_j(\b t ) > P^{-d} } } 
\hspace{-0.3cm}
\prod_{i=1}^R \frac{1}{(\log P^d
(s_j  f_j(\b t ))^{1/2}}
\mathrm d \b t $$ and   
$$ \widehat{\gamma}_z(\b s ):=
 \frac{ 1}{W_z^{n+1}} 
\# \left\{ \b t \in (\Z/ W_z \Z )^{n+1} :
\begin{array}{l} 
 \displaystyle v_p( f_i(\b t )   ) < 2\left[\frac{z-3}{2R} \right]  
 \ \forall p\leq z , \forall i,  \\[2mm]
(D,  f_1(\b t ) \cdots f_R(\b t ) )_{\Q_p}=1 \ \forall p\leq z,  \\
\l(\frac{D}{s_i f_i(\b t)/\gcd(f_i(\b t) ,W_z)}\r)=1 \ 
\forall i \end{array} \right\}  .$$ 
The bound  $\widetilde{\epsilon}(z)\ll 4^{-z}$ can be 
proved as in~\eqref{eq:adriano in siris by pergolesi}, while,   $ \|k\|_1\ll P^{dR} (\log P)^{-R/2}$
follows from  Lemma~\ref{lem:shiuconsapplicsaser}.
Thus, Lemma~\ref{lem:adrianosyria} gives 
$$\c R\ll
\frac{ 
P^{-\delta} +4^{-z}+z^{-c}+ (\log P)^{-1/4} 
 }{(\log P)^{R/2}} $$ as long 
 as $W_z (\log z)^{R/2} \leq (\log P)^{1/4}$.
Note that  
$W_z\leq \mathrm e^{m(z) (z +z/2\log z)}$ since 
$$
\log W_z=z\sum_{p\leq z} \log p  
\leq  z \l(z+\frac{z}{ 2\log z}\r) $$ by~\cite[Equation (3.15)]{MR0137689}.
Hence,   for all large $z$ we have 
 $W_z(\log z)^R \leq 
 \mathrm e^{z (z +z/\log z)}$. Therefore, 
if $5z^2 \leq \log \log P$ we 
infer that $W_z(\log z)^R \leq (\log P)^{1/4}$.
We choose 
$z=\left[\sqrt{(\log \log P)/5}\right]$  so that 
$\c R\ll (\log P)^{-R/2} (\log \log P)^{-c/2}$.

Furthermore, $|\b t|\leq 1$ we see that  
$f_j(\b t)$ is bounded independently of $\b t$, 
thus
$$\frac{1}{\sqrt{ \log (P^d
(s_j  f_j(\b t )))}}
=
\frac{1}{\sqrt{d\log P}}
\frac{1}{ \sqrt{1+\frac{\log (s_j  f_j(\b t ))}{d\log P}} }
=
\frac{1}{\sqrt{d\log P}}
\left(1+O\l(\frac{1}{\log P}\r)\right).$$
In particular, 
 $$ H_\infty(\b s)= 
 \frac{1}{(d\log P)^{R/2}}
 \mathrm{vol}
 \l(|\b t |\leq 1: \min_j s_j f_j(\b t ) > P^{-d}\r)
\left(1+O\l(\frac{1}{\log P}\r)\right).$$
By~\cite[Lemma 1.19]{circle}
the volume of 
$|\b t| \leq 1$
for which 
$|f_j(\b t)| \leq P^{-d}$ is 
$\ll P^{-d}$, hence  $$ H_\infty(\b s)= 
 \frac{ \gamma_\infty(\b s )}{(d\log P)^{R/2}}
\left(1+O\l(\frac{1}{\log 
P}
\r)\right),$$ where $\gamma_\infty(\b s ):=\mathrm{vol}
 \l\{\b t \in [-1,1]^{n+1}:   \mathrm{sign}(f_j(\b t ))=s_j 
 \forall j\r\} $.
Putting all estimates  
 into Lemma~\ref{lem:codijhsdcontentomY}
 yields   
\beq{messiah nostrum}{
\frac{N_0(P)}{P^{n+1}}=  
 \l(
\frac{2\gamma_0 }{\sqrt {\pi d (\log P) }}\r)^R
\prod_{p\leq z}\l(1-\frac{1}{p} \r)^{-\frac{R}{2}}
\hspace{-0.2cm}
\sum_{\substack{ \b s\in \{1,-1\}^R \\ (D, s_1 \cdots s_R)_\R=1 }} 
\gamma_\infty(\b s )
\widehat{\gamma}_z(\b s )
+O\l(\frac{ (\log P)^{-\frac{R}{2} } }{(\log \log P)^{\frac{c}{2}}}
\r). } 
  \begin{lemma}  \label{lem:no more tears} We have  
  $   \widehat{\gamma}_z(\b s ) =     {\gamma}_z(\b s )
  +O(z2^{-z/R})$, where for $T\geq 1 $ we let 
    $$ 
 \gamma_T(\b s ):= \frac{ 1}{\prod_{p\leq T} p^{T(n+1)}} 
\# \left\{ \b t \in \l(\Z/ \prod_{p\leq T} p^T \Z \r)^{n+1} : \begin{array}{l} 
 v_p\l ( \prod_{i=1}^R f_i(\b t )   \r) < T  \ \forall 3 \leqslant p\leqslant T,   \\[2mm]
  v_2\l ( \prod_{i=1}^R f_i(\b t )   \r) < T    -3,  \\
(D,  f_1(\b t ) \cdots f_R(\b t ) )_{\Q_p}=1 \ \forall p\mid q,  \\
\l(\frac{D}{s_i f_i(\b t)/\gcd(f_i(\b t) ,\prod_{p\leq T}p^T )}\r)=1 \ 
\forall i
\end{array}
\right\} $$ and  the implied constant only depends on $f_i$ and $D$.
\end{lemma}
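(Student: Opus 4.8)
The plan is to compare the two counts prime by prime. First I would unwind the definitions. Since $m_p(z)=z$ we have $\prod_{p\le z}p^{z}=W_z$, so both $\widehat{\gamma}_z(\b s)$ and $\gamma_z(\b s)$ are normalised cardinalities of subsets of the \emph{same} ring $(\Z/W_z\Z)^{n+1}$ with the \emph{same} denominator $W_z^{n+1}$. Moreover $p\mid W_z$ if and only if $p\le z$, so the Hilbert-symbol conditions $(D,f_1(\b t)\cdots f_R(\b t))_{\Q_p}=1$ and the Kronecker-symbol conditions $\bigl(\tfrac{D}{s_if_i(\b t)/\gcd(f_i(\b t),W_z)}\bigr)=1$ occurring in the two definitions are literally identical, and by Remark~\ref{remIHP} they depend on $\b t$ only through its residue modulo $W_z$ under the valuation bounds imposed in either definition. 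Hence the sole genuine difference is in the valuation conditions: $\widehat{\gamma}_z(\b s)$ imposes $v_p(f_i(\b t))<2\alpha$ for every $p\le z$ and every $i$, with $\alpha=\bigl[\tfrac{z-3}{2R}\bigr]$, whereas $\gamma_z(\b s)$ imposes only $v_p\bigl(\prod_i f_i(\b t)\bigr)<z$ for odd $p\le z$ together with $v_2\bigl(\prod_i f_i(\b t)\bigr)<z-3$.

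Next I would establish an inclusion of sets. Since $v_p\bigl(\prod_i f_i(\b t)\bigr)=\sum_i v_p(f_i(\b t))$ and $2R\alpha\le z-3$, the condition $v_p(f_i(\b t))<2\alpha$ for all $i$ forces $v_p\bigl(\prod_i f_i(\b t)\bigr)<z-3$. Thus the set $\widehat{G}_z$ defining $\widehat{\gamma}_z(\b s)$ is contained in the set $G_z$ defining $\gamma_z(\b s)$, so that $\gamma_z(\b s)-\widehat{\gamma}_z(\b s)=W_z^{-(n+1)}\#(G_z\setminus\widehat{G}_z)\ge 0$. If $\b t\in G_z\setminus\widehat{G}_z$, then $\b t$ satisfies the common Hilbert and Kronecker conditions and hence must violate $v_p(f_i(\b t))<2\alpha$ for some $p\le z$ and some $i$; discarding the remaining constraints and using a union bound gives
$$
0\le \gamma_z(\b s)-\widehat{\gamma}_z(\b s)\le \sum_{p\le z}\ \sum_{i=1}^R\ \frac{\#\{\b t\in(\Z/W_z\Z)^{n+1}\colon p^{2\alpha}\mid f_i(\b t)\}}{W_z^{n+1}}.
$$

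To bound each summand I would use that $2\alpha\le z=v_p(W_z)$, so the condition $p^{2\alpha}\mid f_i(\b t)$ depends only on $\b t$ modulo $p^{2\alpha}$; splitting off the $p$-part of $W_z$ by the Chinese remainder theorem identifies the summand with the $p$-adic density $\mu_p\{\b x\in\Z_p^{n+1}\colon p^{2\alpha}\mid f_i(\b x)\}$, which is $\ll_{f_i}p^{-2\alpha}$ by Lemma~\ref{lem:mitsotakigamiesai2}. As $p\ge 2$ we may bound $p^{-2\alpha}\le 2^{-2\alpha}$, and the elementary inequalities $2\alpha\ge\tfrac{z-3}{R}-2\ge\tfrac{z}{R}-5$ give $2^{-2\alpha}\ll 2^{-z/R}$; summing over the at most $z$ primes $p\le z$ and the $R$ indices $i$ yields $\gamma_z(\b s)-\widehat{\gamma}_z(\b s)\ll_{f_i}z\,2^{-z/R}$, as required.

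I do not anticipate a real obstacle: this is a routine geometric-sieve-type estimate built on the already-established Lemma~\ref{lem:mitsotakigamiesai2}. The two points needing care are the bookkeeping of the first paragraph (verifying that the Hilbert- and Kronecker-symbol data genuinely coincide in the two definitions and are determined by $\b t$ modulo $W_z$), and the use of the crude \emph{uniform} bound $p^{-2\alpha}\le 2^{-2\alpha}$ rather than the sharper $p$-dependent one: it is exactly this uniformity, together with $2\alpha\ge z/R-O(1)$, that produces the precise shape $z\,2^{-z/R}$ of the error term, a $p$-dependent bound leaving a superfluous power of $z$.
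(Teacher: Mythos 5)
Your proof is correct and follows essentially the same route as the paper: you isolate the valuation constraint $v_p(f_i(\b t))<2\alpha$ as the sole difference between the two counting sets, union-bound the exceptional set over $p\le z$ and $1\le i\le R$, and estimate each term by the $p$-adic density bound $\ll p^{-2\alpha}\le 2^{-2\alpha}$ with $2\alpha\ge z/R-O(1)$. The only cosmetic difference is that you invoke the packaged Lemma~\ref{lem:mitsotakigamiesai2} whereas the paper applies~\eqref{eq:freezers1} (via~\eqref{eq:freezers2}) directly, but that lemma is proved by exactly these estimates, so the two are interchangeable.
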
 \begin{proof}
  The condition $v_p( f_i(\b t )   ) < 2\left[\frac{z-3}{2R} \right] $ implies 
 $ v_p\l ( \prod_{i=1}^R f_i(\b t )   \r) < z-3$. 
Note that the contribution of $\b t \in (\Z/ W_z \Z )^{n+1} $ for which there are  $i$ and $p$
such that $v_p( f_i(\b t )   ) \geq m:=  2\left[\frac{z-3}{2R} \right] $ is  
$$ \ll  \sum_{\substack{ i=1,\ldots, R \\ p\leq z}} 
\frac{ \#\{\b t \in (\Z/ W_z \Z )^{n+1}:p^{m } \mid f_i(\b t )  \}}{W_z^{n+1}}=
 \sum_{\substack{ i=1,\ldots, R \\ p\leq z}} 
\frac{ \#\{\b t \in (\Z/ p^{m} \Z )^{n+1}:f_i(\b t )=0  \}}{p^{m(n+1)}}.
$$ By~\eqref{eq:freezers1} this is 
$$ \ll    \sum_{\substack{ i=1,\ldots, R \\ p\leqslant z}} 
\frac{ \#\{\boldsymbol \nu \in (\Z/ p^m \Z )^{R}:\nu_i=0  \}}{p^{{m}R}}\leq R \sum_{p\leqslant z} \frac{1}{p^m}
\ll \frac{z}{2^m}\ll \frac{z}{2^{z/R}} ,$$ which concludes the proof.\end{proof}

  \begin{lemma}
  \label{lem:ozzy}
  For each $\b s \in \{-1,1\}^R$ the limit 
  $$  \gamma(\b s ):=  \lim_{T\to\infty}
\gamma_T(\b s )
  \prod_{p\leq T}\l(1-\frac{1}{p} \r)^{-\frac{R}{2} }  $$ exists
and for any $T>1$  we have 
  $$\gamma_T(\b s )  \prod_{p\leq T}\l(1-\frac{1}{p} \r)^{-\frac{R}{2} }  
=\gamma(\b s )  +O(2^{-T/2} ), $$    where the implied constant depends only on the $f_i$ and $D$.
\end{lemma}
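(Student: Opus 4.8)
\emph{Proof plan.}
The plan is to open up $\gamma_T(\b s)$ through the Chinese Remainder Theorem into a linear combination of $2^{R}$ Euler products over the primes $p\leq T$, to replace each truncated local factor by its honest $p$-adic value at the cost of an exponentially small error, and finally to establish the convergence of the resulting products. Throughout we may assume $T$ exceeds every prime dividing $2D$: the finitely many smaller values of $T$ are harmless, since $\gamma(\b s)$ and $\gamma_T(\b s)\prod_{p\leq T}(1-1/p)^{-R/2}$ are then trivially bounded and the estimate holds with a large implied constant.

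Write $q=\prod_{p\leq T}p^{T}$, so $(\Z/q\Z)^{n+1}\cong\prod_{p\leq T}(\Z/p^{T}\Z)^{n+1}$. The valuation conditions are manifestly $p$-local, and by Lemmas~\ref{lem:hilbser}, \ref{lem:detectorslemma}, \ref{lem:detectorslemma2} and Remark~\ref{remIHP} the Hilbert--symbol condition $(D,\prod_i f_i(\b t))_{\Q_p}=1$ depends only on the $p$-component of $\b t$. For the Kronecker symbol I would use that $\left(\frac{D}{\cdot}\right)$ is a real Dirichlet character of modulus dividing $4|D|$, completely multiplicative in its argument: since the conditions $v_p(\prod_jf_j(\b t))<T$ (and $<T-3$ at $p=2$) force $v_p(f_i(\b t))<T$ for all $p\leq T$, the number $m_i:=s_if_i(\b t)/\gcd(f_i(\b t),q)$ is coprime to $4|D|$, and $m_i\bmod 4|D|$ — hence $\left(\frac{D}{m_i}\right)$ — is a well-defined function of $\b t\bmod q$ which factors as $\prod_{p\leq T}\varepsilon_{i,p}(\b t)$ with $\varepsilon_{i,p}(\b t)\in\{\pm1\}$ depending only on the $p$-component of $\b t$ and, for $p\nmid 4D$, equal to $\left(\frac{D}{p}\right)^{v_p(f_i(\b t))}$. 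Expanding $\prod_i\mathds 1\!\left(\left(\tfrac{D}{m_i}\right)=1\right)=2^{-R}\sum_{S\subseteq\{1,\dots,R\}}\prod_{i\in S}\prod_{p\leq T}\varepsilon_{i,p}(\b t)$ gives
\[
\gamma_T(\b s)=\frac{1}{2^{R}}\sum_{S\subseteq\{1,\dots,R\}}\ \prod_{p\leq T}\sigma^{(T)}_p(S,\b s),
\]
where $\sigma^{(T)}_p(S,\b s)$ is the density in $(\Z/p^{T}\Z)^{n+1}$ of the $p$-local valuation and Hilbert conditions, weighted by $\prod_{i\in S}\varepsilon_{i,p}(\b t)$.

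Next I would compare $\sigma^{(T)}_p(S,\b s)$ with the honest density $\sigma_p(S,\b s)$ over $\Z_p^{n+1}$ carrying the same weight but no valuation restriction: Lemma~\ref{lem:freezers}~\eqref{eq:freezers2} controls the error from truncating the modulus at $p^{T}$, and Lemma~\ref{lem:mitsotakigamiesai2} controls the error from dropping the conditions $v_p(\prod_jf_j)<T$, yielding $\sigma^{(T)}_p(S,\b s)=\sigma_p(S,\b s)+O(T^{R-1}p^{-T})$ with implied constant depending only on the $f_i$. Since each $\prod_{p\leq T}\sigma_p(S,\b s)$ is bounded (it even tends to $0$ for $S\neq\emptyset$), multiplying out and summing over $p\leq T$ gives $\gamma_T(\b s)=2^{-R}\sum_{S}\prod_{p\leq T}\sigma_p(S,\b s)+O(2^{-T/2})$. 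Finally, using \eqref{eq:freezers1} together with the description of $\varepsilon_{i,p}$ one checks that $\sigma_p(S,\b s)$ depends on $S$ only for the primes $p\mid 2D$, equals $1$ for every $p$ split in $\Q(\sqrt D)$, and equals $1-R/p+O(p^{-1-c})$ (uniformly in $S$) for every inert $p$; in particular $(1-1/p)^{-R/2}\sigma_p(S,\b s)=(1-1/p)^{R/2}\bigl(1+O(p^{-1-c})\bigr)$ at inert primes. Splitting off the finitely many ramified primes, $\prod_{p\leq T}(1-1/p)^{-R/2}\prod_{p\leq T}\sigma_p(S,\b s)$ is, up to a fixed constant,
\[
\Big(\prod_{\substack{p\leq T\\ p\ \mathrm{split}}}(1-1/p)^{-R/2}\Big)\Big(\prod_{\substack{p\leq T\\ p\ \mathrm{inert}}}(1-1/p)^{R/2}\Big)\ \prod_{\substack{p\leq T\\ p\ \mathrm{inert}}}\bigl(1+O(p^{-1-c})\bigr),
\]
and although the first two factors blow up, respectively vanish, like powers of $\log T$, the logarithm of the combined $p$-th factor is $\tfrac{1}{2}R\left(\tfrac{D}{p}\right)/p+O(p^{-1-c})$, so the convergence of $\sum_p\left(\tfrac{D}{p}\right)/p$ — quantified by the strong prime number theorem exactly as in Remark~\ref{rem:immolation} — yields the existence of $\gamma(\b s)$ and, together with the truncation estimate, the stated rate.

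The main obstacle is the factorisation of the Kronecker condition in the second paragraph: one must verify that, under the valuation bounds built into $\gamma_T(\b s)$, the symbol $\left(\tfrac{D}{m_i}\right)$ genuinely descends to a function of $\b t\bmod q$ and splits into $p$-local pieces — this is precisely where the shapes $v_p(\prod_jf_j)<T$ and $v_2(\prod_jf_j)<T-3$ are used — while keeping careful track of the sign of $m_i$. A secondary delicate point is that the Euler products above are only conditionally convergent, so the last step genuinely relies on the cancellation between split and inert primes rather than on any absolute bound.
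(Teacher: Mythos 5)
Your approach is essentially the same as the paper's (\S\ref{s:prfoz}, via Lemmas~\ref{lem:decomposeunramified}, \ref{Nicola Porpora}, \ref{Nicola Porpora234}), just with a cleaner bookkeeping: the paper first splits off the bad primes $p\mid 2D$ as a block and only then factors the good part prime-by-prime, whereas you factor the Kronecker symbol all the way down into $p$-local characters $\varepsilon_{i,p}$ in a single step, using CRT on the Dirichlet-character modulus $4|D|$ and multiplicativity. That factorisation is valid --- in substance it is the same Hilbert-reciprocity computation the paper carries out later in the Tamagawa section, Lemma~\ref{lem:Tamagawa part for finite S} --- and your identification of this as the main obstacle is apt. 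Your use of Lemma~\ref{lem:freezers} and Lemma~\ref{lem:mitsotakigamiesai2} to control the per-prime truncation at level $p^{T}$ also matches Lemma~\ref{Nicola Porpora}, and your computation of $\sigma_p(S,\b s)$ at split and inert primes is correct.

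There is, however, a gap in the final step, and you appear to inherit it from the stated lemma rather than notice it. You (and Lemma~\ref{lem:ozzy}) assert $\gamma_T(\b s)\prod_{p\leq T}(1-1/p)^{-R/2}=\gamma(\b s)+O(2^{-T/2})$, but the exponential rate only accounts for the truncation error at each $p\leq T$. The difference also contains the tail $\prod_{p>T}(1-(\tfrac{D}{p})/p)^{-R/2}\bigl(1+O(p^{-1-c'})\bigr)-1$, and $\sum_{p>T}(\tfrac{D}{p})/p$ decays only polylogarithmically: the strong prime number theorem, exactly as invoked in Remark~\ref{rem:immolation}, gives $O_A((\log T)^{-A})$, not $O(2^{-T/2})$. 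You explicitly note that the Euler products are only conditionally convergent --- which is precisely the obstruction --- yet then claim the exponential rate follows. The same issue is present in the paper's proof: \eqref{Pastorella In F Major, BWV 590} bounds only the truncation over $p\leq T$, and the passage from the finite $\prod_{p\mid q_2}\gamma_{p^\infty}^{\mathrm{good}}$ to the infinite $\mathfrak S(\b s;I)$ in \eqref{eq:eulerproducts!} leaves the tail over $p>T$ unbounded at that rate. The correct rate $O_A((\log T)^{-A})$ still suffices where the lemma is used in \S\ref{s:activation} (there $T=z\sim\sqrt{\log\log P}$, so $(\log z)^{-A}\sim(\log\log\log P)^{-A}$ matches the error in Theorem~\ref{thm:mainthrm}), so the final asymptotic is unaffected; but the rate as stated in Lemma~\ref{lem:ozzy} does not follow from the arguments given, neither yours nor the paper's.
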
Lemma~\ref{lem:ozzy}  will be established
in \S\ref{s:prfoz};
for now we assume its validity and use it to conclude the proof of Theorem~\ref{thm:mainthrm}. 
Injecting Lemmas~\ref{lem:no more tears}-\ref{lem:ozzy} into~\eqref{messiah nostrum}
yields 
$$\frac{N_0(P)}{P^{n+1}}=  
 \l(
\frac{2\gamma_0 }{\sqrt {\pi d (\log P) }}\r)^R
\sum_{\substack{ \b s\in \{1,-1\}^R \\ (D, s_1 \cdots s_R)_\R=1 }} 
\gamma_\infty(\b s ) \gamma(\b s )
+O\l( 
\frac{ (\log P)^{-\frac{R}{2} } }{\min\left\{(\log \log P)^{\frac{c}{2}},
z^{-1} 2^{z/(2R)}
\right\}
}
\r).
$$
By Remark~\ref{rem:immolation}
we obtain 
$$\frac{N_0(P)(\log P)^{R/2}}{P^{n+1}}=  
 \l( \frac{2  }{\sqrt {\pi d   }}\r)^R
\sum_{\substack{ \b s\in \{1,-1\}^R \\ (D, s_1 \cdots s_R)_\R=1 }} 
\gamma_\infty(\b s ) \gamma(\b s )
+O\l( \frac{ 1 }{(\log \log \log P)^A }\r),
$$   for any fixed $A>0$.
Hence, by~\eqref{eq:reni}
we obtain   
$$
N(B) = \gamma\frac{(n+1)^{R/2}}{2 \zeta(n+1)}
 \l(\frac{2 }{\sqrt {\pi d} }\r)^R
\frac{B }{(\log B)^{R/2}}
+O\l( 
\frac{B }{(\log B)^{R/2}}
\frac{ 1 }{(\log \log \log B)^A
}
\r),$$
where
 $$\gamma=\sum_{\substack{ \b s\in \{1,-1\}^R \\ 
(D, s_1 \cdots s_R)_\R=1 }} 
 \gamma(\b s )\cdot \mathrm{vol}
 \l\{\b t \in [-1,1]^{n+1}:   \mathrm{sign}(f_j(\b t ))=s_j \,\, \forall j\r\} $$ 
and  the implied constant depends only on 
$A,D$ and $f_i$.
This proves Theorem~\ref{thm:mainthrm}.

\subsection{Convergence of the leading constant}
\label{s:prfoz}
In this section we 
prove Lemma~\ref{lem:ozzy}.
By the Chinese remainder theorem 
we can might attempt to  write  
the cardinality of $\b t $ in $\gamma_T(\b s )$
as a product of $p$-adic densities. However, the presence of the term 
$\l(\frac{D}{\cdot}\r)$
prevents us from doing this directly as it has no good multiplicative properties for 
the 
primes $p\mid 2D$.
We therefore start by  decomposing the density into bad and     good primes.
Define 
$$
q_1=\prod_{p\mid 2D} p^{T} 
\ \ \textrm{ and } \ \ 
q_2=\prod_{\substack{ p\leq T \\ p\nmid 2D }}p^{T}
.$$
More importantly, we shall use a  product expression
to detect the condition
$(\frac{D}{\cdot})=1$ for all $i$; this will later allow us to write the densities from good primes as a finite sum of Euler products.
\begin{lemma} \label{lem:decomposeunramified}
For $q_1,q_2$ as above 
we have  \beq{def:RITinj}{
\gamma_T(\b s ) 
\prod_{p\leq T}\l(1-\frac{1}{p} \r)^{-\frac{R}{2} }  
 = 2^{-R}\sum_{I\subset \N \cap [1,R]}
\gamma_T^{\mathrm{bad}}(\b s;I ) 
\gamma_{q_2}^{\mathrm{good}}(\b s ;I)  
 ,} where $ \gamma_T^{\mathrm{bad}}(\b s;I ) $ is 
defined through \beq{def:RIT}{ 
\gamma_T^{\mathrm{bad}}(\b s ;I) 
\prod_{p\mid q_1}\l(1-\frac{1}{p} \r)^{R/2} =
q_1^{-n-1}  \sum_{\substack{ \b z \in (\Z/q_1\Z)^{n+1} 
\\ \eqref{eq:cond123}  \textrm{ holds for all }  p\mid q_1 } }
  \prod_{i\in I}    \l(\frac{D}{s_i f_i( \b z)/\gcd(f_i(\b z) ,q_1)}\r)  
} and  for 
$r\in \mathbb N$ we let $$
\gamma_{r}^{\mathrm{good}}(\b s ;I) 
 =
\l( \prod_{p \mid r} \l(1-\frac{1}{p}\r)^{-R/2}\r)
r^{-n-1} 
\sum_{\substack{ \b z \in (\Z/r\Z)^{n+1} 
\\ \eqref{eq:cond123}  \textrm{ holds for all }  p\mid r } }
\prod_{i\in I} \l(\frac{D}{ \gcd(f_i(\b z) ,r)}\r),$$
where \beq{eq:cond123}
{v_p\l (\prod_{i=1}^R f_i(\b z) \r )< T-3,
\ \   \Big(D, \prod_{i=1}^R f_i(\b z )  \Big)_{\Q_p}=1. } \end{lemma}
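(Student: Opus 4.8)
The plan is to deduce \eqref{def:RITinj} by splitting the modulus $\prod_{p\le T}p^T=q_1q_2$ via the Chinese remainder theorem and expanding the indicator of the Kronecker conditions multiplicatively. Throughout I assume $T$ is large enough that every prime dividing $2D$ is at most $T$ (only such $T$ matter for the limit in Lemma~\ref{lem:ozzy}), so that $\gcd(q_1,q_2)=1$ and $q_1q_2=\prod_{p\le T}p^T$; write $\chi=\left(\frac{D}{\cdot}\right)$ for the Kronecker symbol, a completely multiplicative function whose conductor divides $4|D|$. For any $\b t$ in the index set of $\gamma_T(\b s)$ the valuation bounds force $\gcd(f_i(\b t),q)=\prod_{p\le T}p^{v_p(f_i(\b t))}$, so the rough part $r_i:=f_i(\b t)/\gcd(f_i(\b t),q)$ is coprime to $q$, hence to $4|D|$, and $\chi(s_i r_i)\in\{\pm1\}$. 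Therefore the indicator of the conditions $\chi(s_i f_i(\b t)/\gcd(f_i(\b t),q))=1$ for all $i$ equals $\prod_{i=1}^R\tfrac12\left(1+\chi(s_i r_i)\right)=2^{-R}\sum_{I\subset\{1,\dots,R\}}\prod_{i\in I}\chi\!\left(s_i f_i(\b t)/\gcd(f_i(\b t),q)\right)$, which produces the outer sum over $I$ and the factor $2^{-R}$.

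Next, for each fixed $I$ I will factor the character product into a $q_1$-piece and a $q_2$-piece. Writing $g_{1,i}=\gcd(f_i(\b t),q_1)$ and $g_{2,i}=\gcd(f_i(\b t),q_2)$, so that $f_i(\b t)=g_{1,i}g_{2,i}r_i$ with $g_{2,i}$ and $r_i$ coprime to $4|D|$, complete multiplicativity of $\chi$ and $\chi^2=1$ on units give
$$\chi\!\left(\frac{s_i f_i(\b t)}{\gcd(f_i(\b t),q)}\right)=\chi\!\left(\frac{s_i f_i(\b t)}{\gcd(f_i(\b t),q_1)}\right)\cdot\chi\!\left(\gcd(f_i(\b t),q_2)\right).$$
The second factor equals $\prod_{p\mid q_2}\left(\tfrac{D}{p}\right)^{v_p(f_i(\b t))}$ and so depends only on $\b t\bmod q_2$. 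For the first factor: $s_i f_i(\b t)/g_{1,i}$ is determined modulo $q_1/g_{1,i}$ by $\b t\bmod q_1$, and $4|D|$ divides $q_1/g_{1,i}=\prod_{p\mid 2D}p^{\,T-v_p(f_i(\b t))}$, since the defining inequalities of $\gamma_T(\b s)$ yield $v_2(f_i(\b t))\le T-2-v_2(D)$ and $v_p(f_i(\b t))\le T-1$ for odd $p\mid D$, while $D$ is squarefree and $\mathrm{cond}(\chi)\mid 4|D|$; hence the first factor depends only on $\b t\bmod q_1$.

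With these two observations in hand the proof is bookkeeping. The valuation bounds and the Hilbert-symbol conditions $(D,\prod_j f_j(\b t))_{\Q_p}=1$ are conditions at a single prime $p$, hence they respect the isomorphism $(\Z/q\Z)^{n+1}\cong(\Z/q_1\Z)^{n+1}\times(\Z/q_2\Z)^{n+1}$ and, at the primes dividing $q_1$ (resp.\ $q_2$), are exactly \eqref{eq:cond123}. Substituting the expansion from the first paragraph and the factorisation from the second, and summing over $\b t\bmod q$, the count in $\gamma_T(\b s)$ becomes
$$2^{-R}\sum_I\Bigg(\sum_{\substack{\b z\in(\Z/q_1\Z)^{n+1}\\ \eqref{eq:cond123}\ \forall p\mid q_1}}\prod_{i\in I}\chi\!\left(\frac{s_i f_i(\b z)}{\gcd(f_i(\b z),q_1)}\right)\Bigg)\Bigg(\sum_{\substack{\b z\in(\Z/q_2\Z)^{n+1}\\ \eqref{eq:cond123}\ \forall p\mid q_2}}\prod_{i\in I}\chi\!\left(\gcd(f_i(\b z),q_2)\right)\Bigg);$$
multiplying by $q^{-n-1}\prod_{p\le T}(1-1/p)^{-R/2}$ and distributing the normalisation as $q_1^{-n-1}\prod_{p\mid q_1}(1-1/p)^{-R/2}$ and $q_2^{-n-1}\prod_{p\mid q_2}(1-1/p)^{-R/2}$ identifies the two inner sums with $\gamma_T^{\mathrm{bad}}(\b s;I)$ and $\gamma_{q_2}^{\mathrm{good}}(\b s;I)$, which is \eqref{def:RITinj}.

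The one delicate step is the claim in the second paragraph that the $q_1$-piece of the character is well defined modulo $q_1$ rather than merely modulo $q_1/g_{1,i}$; this is precisely what dictates the exact valuation cutoffs (the reason for the ``$-3$'' at the prime $2$) and is settled by the elementary computation of $v_2$ and $v_p$ above together with $\mathrm{cond}(\chi)\mid 4|D|$. Everything else --- the Chinese remainder splitting, the $2^{-R}\sum_I$ expansion, and the matching of normalisation factors --- is routine.
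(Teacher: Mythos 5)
Your proposal is correct and follows essentially the same approach as the paper: split the modulus $\prod_{p\leq T}p^T=q_1q_2$ by the Chinese remainder theorem, expand the $R$ indicator conditions via $\prod_i\frac12(1+\chi(s_ir_i))=2^{-R}\sum_I\prod_{i\in I}\chi(s_ir_i)$, and use the conductor bound $4|D|\mid q_1/\gcd(f_i(\b t),q_1)$ (guaranteed by the valuation cutoffs) to factor each Kronecker symbol into a piece depending only on $\b t\bmod q_1$ and a piece depending only on $\b t\bmod q_2$. The only cosmetic difference is that the paper writes $\b t=q_2\b x+q_1\b y$ explicitly, forms $\epsilon_i(\b x)$, and then changes variables to $\b z=q_2\b x$, whereas you directly observe that $\chi\bigl(s_if_i(\b t)/\gcd(f_i(\b t),q_1)\bigr)$ is a well-defined function of $\b t\bmod q_1$ and call that reduction $\b z$ --- the same step, phrased without an explicit change of variables.
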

\begin{proof} Let  $q=q_1q_2$. 
Since $q_1,q_2$ are coprime, 
each $t_i\in \Z/q\Z$ 
is uniquely written as 
$x_iq_2+ y_i q_1$ where $x_i\in \Z/q_1\Z$ 
and $y_i\in \Z/q_2\Z$. 
Then  $f_i(\b t)\equiv q_2^d f_i(\b x) \md{q_1}$ and 
$f_i(\b t)\equiv q_1^d f_i(\b y) \md{q_2}$, 
thus, 
 $\gcd(f_i(\b t) ,q)=
 \gcd(f_i(\b x) ,q_1)
\gcd(f_i(\b y) ,q_2)$. In particular, 
$$
\l(\frac{D}{s_i f_i(\b t)/\gcd(f_i(\b t) ,q)}\r)=
\l(\frac{D}{s_i f_i(\b t)/\gcd(f_i(\b x) ,q_1) }\r)
\l( \frac{D}{\gcd(f_i(\b y) ,q_2)}\r). $$
Recalling that $D$ is assumed square-free
we  note that if $a\equiv b \md{8D}$ then 
$(\frac{D}{a} )=(\frac{D}{b}).$
The upper bound conditions on $v_p$ and $v_2$
in the the counting function in  
$\gamma_T(\b s )$ ensure that 
$$  \frac{ f_i(\b t ) }{\gcd(f_i(\b x ), q_1) }
\equiv \frac{ f_i(q_2\b x ) }
{\gcd(f_i(\b x ), q_1) }\md{8D},$$
hence, 
$(\frac{D}{s_i f_i(\b t)/\gcd(f_i(\b x) ,q_1) })
=(\frac{D}{s_i 
f_i(q_2\b x)/\gcd(f_i(\b x) ,q_1) })$. Thus, 
$$\l(\frac{D}{s_i f_i(\b t)/\gcd(f_i(\b t) ,\prod_{p\leq T} p^T }\r)=1
\iff 
\l(\frac{D}{ \gcd(f_i(\b y) ,q_2)}\r)
 \epsilon_i(\b x ) =1,$$ 
where  $
\epsilon_i(\b x):=
 (\frac{D}{s_i f_i(q_2\b x)/\gcd(f_i(\b x) ,q_1)
} ) 
$. We deduce that the 
succeeding expression takes the value 
$1$ or $0$ according to whether 
$\l(\frac{D}{ \gcd(f_i(\b y) ,q_2)}\r)
 \epsilon_i(\b x ) =1 $ holds for all $i$ or not: 
$$ \prod_{i=1}^R \frac{ 1+
 \epsilon_i(\b x )
\l(\frac{D}{ \gcd(f_i(\b y) ,q_2)}\r)}{2}=
2^{-R}\sum_{I\subset [1,R]}
\prod_{i\in I} \epsilon_i(\b x) 
\l(\frac{D}{ \gcd(f_i(\b y) ,q_2)}\r)
.$$  
The proof  concludes by making the invertible change of variables $q_2 \b x  = \b z$ in $\Z/q_1\Z$.
\end{proof}

\begin{remark} 
This approach to proving convergence differs from the one in \cite[Proposition 3.13]{LRS}, as the proof of Theorem~\ref{thm:mainthrm} at the end 
of~\S\ref{s:activation} requires an explicit error term in Lemma~\ref{lem:ozzy}. To obtain this, we will make use of estimates from the circle method.
\end{remark}

A straightforward argument using the Chinese remainder theorem 
shows that \beq{ghCRT}{ 
\gamma_{q_2}^{\mathrm{good}}(\b s ;I) 
= \prod_{p\mid q_2}  
\gamma_{p^T}^{\mathrm{good}}(\b s ;I) .} 
\begin{lemma} \label{Nicola Porpora} For each    $p\nmid 2D$
the limit   $ 
\gamma_{p^\infty}^{\mathrm{good}}(\b s ;I)  
:= \lim_{T\to \infty } 
\gamma_{p^T}^{\mathrm{good}}(\b s ;I)  
 $ exists. Furthermore,  
\beq{Livietta e Tracollo}{
\gamma_{p^T}^{\mathrm{good}}(\b s ;I)  
= 
\gamma_{p^\infty}^{\mathrm{good}}(\b s ;I)   
+O(p^{-T/2 }),}   \beq{Domenico Natale Sarro}{
\gamma_{p^\infty}^{\mathrm{good}}(\b s ;I)   
    = (1+O(p^{-1-\min\{c',1\}})) 
   \l(1-\frac{ (\frac{D}{p}) }{p} \r)^{-\frac{R}{2}}}
   and 
   \beq{Pastorella In F Major, BWV 590}{
\gamma_{q_2}^{\mathrm{good}}(\b s ;I)  
 = O( 2^{-T/2} )+\prod_{p\mid q_2} 
 \gamma_{p^\infty}^{\mathrm{good}}(\b s ;I)   
,}
where $c'$ is as in~\eqref{eq:blindleadingtheblind}
and the implied constants only depend on $f_i$ and $D$.
\end{lemma}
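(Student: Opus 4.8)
The plan is to rewrite $\gamma_{p^T}^{\mathrm{good}}(\b s;I)$ as a finite truncation of an absolutely convergent $p$-adic series indexed by the vector of $p$-adic valuations of the $f_i$'s, to identify $\gamma_{p^\infty}^{\mathrm{good}}(\b s;I)$ with the full series, to control the tail with the singular-series estimates of Lemma~\ref{lem:freezers}, and to extract the Euler factor by a short geometric-series computation. Since $p\nmid 2D$ the prime $p$ is odd with $v_p(D)=0$, so Lemma~\ref{lem:hilbser} gives $(D,m)_{\Q_p}=(\tfrac Dp)^{v_p(m)}$ for $m\in\Q_p^\times$ and $(\tfrac D{\gcd(f_i(\b z),p^T)})=(\tfrac Dp)^{v_p(f_i(\b z))}$. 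Grouping the sum that defines $\gamma_{p^T}^{\mathrm{good}}(\b s;I)$ in Lemma~\ref{lem:decomposeunramified} according to $\b k=(v_p(f_i(\b z)))_{1\le i\le R}$ — the valuation constraint in~\eqref{eq:cond123} forces $k_i\le\sum_j k_j<T$, so $\{v_p(f_i(\b z))=k_i\ \forall i\}$ is determined mod $p^T$ — I get
\[
\gamma_{p^T}^{\mathrm{good}}(\b s;I)=\Big(1-\tfrac1p\Big)^{-R/2}\sum_{\substack{\b k\in\Z_{\ge0}^R,\ \sum_i k_i<T\\ (\frac Dp)^{\sum_i k_i}=1}}\Big(\tfrac Dp\Big)^{\sum_{i\in I}k_i}\,m(\b k),\qquad m(\b k):=\mu_p\{\b z\in\Z_p^{n+1}:v_p(f_i(\b z))=k_i\ \forall i\}.
\]
I then \emph{define} $\gamma_{p^\infty}^{\mathrm{good}}(\b s;I)$ by the same formula with the constraint $\sum_i k_i<T$ removed; since $\sum_{\b k}m(\b k)\le 1$ and every summand has modulus $\le1$, this series converges absolutely, which proves the limit exists, and $|\gamma_{p^\infty}^{\mathrm{good}}(\b s;I)-\gamma_{p^T}^{\mathrm{good}}(\b s;I)|\le(1-\tfrac1p)^{-R/2}\sum_{\sum_i k_i\ge T}m(\b k)$.

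For~\eqref{Livietta e Tracollo} I would bound $m(\b k)\le M(\b k):=\mu_p\{p^{k_i}\mid f_i(\b z)\ \forall i\}$ and show $M(\b k)\ll p^{-\sum_i k_i}$: with $K=\max_i k_i$ one has $M(\b k)=p^{-K(n+1)}\#\{\b z\bmod p^K:p^{k_i}\mid f_i(\b z)\ \forall i\}$, and writing the count as a sum over the $p^{KR-\sum_i k_i}$ residues $\bnu\bmod p^K$ with $p^{k_i}\mid\nu_i$ and applying the uniform bound $\#\{\b z\bmod p^K:(f_j(\b z))\equiv\bnu\}\ll p^{K(n+1-R)}$ from Lemma~\ref{lem:freezers} gives the claim. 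Hence $\sum_{\sum_i k_i\ge T}m(\b k)\ll\sum_{K\ge T}\binom{K+R-1}{R-1}p^{-K}\ll_R T^{R-1}p^{-T}$, which is $\ll p^{-T/2}$ for $p\ge3$; this proves~\eqref{Livietta e Tracollo} (with the sharper error $T^{R-1}p^{-T}$ available when needed).

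For~\eqref{Domenico Natale Sarro} I would upgrade the estimate for $M(\b k)$: averaging $\sigma_p(\bnu)=1+O(p^{-1-c'})$ together with the approximation~\eqref{eq:freezers2} yields $M(\b k)=p^{-\sum_i k_i}(1+O(p^{-1-c'}))$ uniformly in $\b k$, so by inclusion–exclusion $m(\b k)=\sum_{S\subseteq[1,R]}(-1)^{\#S}M(\b k+\b e_S)=(1-\tfrac1p)^R p^{-\sum_i k_i}+O(p^{-\sum_i k_i-1-c'})$. Feeding this into the series for $\gamma_{p^\infty}^{\mathrm{good}}(\b s;I)$, the error contributes $O(p^{-1-c'})$ because $\sum_{\b k}p^{-\sum_i k_i}=(1-\tfrac1p)^{-R}$ is bounded, and the main term is $(1-\tfrac1p)^{R/2}\sum_{\b k:(\frac Dp)^{\sum_i k_i}=1}(\tfrac Dp)^{\sum_{i\in I}k_i}p^{-\sum_i k_i}$. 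If $(\tfrac Dp)=1$ all constraints and weights are trivial, the sum is $(1-\tfrac1p)^{-R}$, and the main term equals $(1-(\tfrac Dp)/p)^{-R/2}$ exactly; if $(\tfrac Dp)=-1$ I would write $\mathds1(\sum_i k_i\text{ even})=\tfrac12(1+(-1)^{\sum_i k_i})$, factor the two resulting products of geometric series, and simplify to $(1+\tfrac1p)^{-R/2}\cdot\tfrac12(u^{R/2-\#I}+u^{\#I-R/2})$ with $u=\tfrac{1+1/p}{1-1/p}$; since $\log u=2/p+O(p^{-3})$, the final factor is $1+O_R(p^{-2})$. Combining the two cases with the error, and using that $(1-(\tfrac Dp)/p)^{-R/2}$ stays bounded away from $0$ and $\infty$ for $p\ge3$, gives $\gamma_{p^\infty}^{\mathrm{good}}(\b s;I)=(1-(\tfrac Dp)/p)^{-R/2}(1+O(p^{-2})+O(p^{-1-c'}))=(1-(\tfrac Dp)/p)^{-R/2}(1+O(p^{-1-\min\{c',1\}}))$.

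Finally~\eqref{Pastorella In F Major, BWV 590} follows from~\eqref{ghCRT}: $\gamma_{q_2}^{\mathrm{good}}(\b s;I)=\prod_{p\mid q_2}\gamma_{p^T}^{\mathrm{good}}(\b s;I)=\prod_{p\mid q_2}(\gamma_{p^\infty}^{\mathrm{good}}(\b s;I)+O_R(T^{R-1}p^{-T}))$ by~\eqref{Livietta e Tracollo}; expanding the product and using that the truncated products $\prod_{p\le T}|\gamma_{p^\infty}^{\mathrm{good}}(\b s;I)|$ are $\ll(\log T)^{O_R(1)}$ by~\eqref{Domenico Natale Sarro} (which also shows $\gamma_{p^\infty}^{\mathrm{good}}(\b s;I)=1+\tfrac R2\cdot\tfrac{(\frac Dp)}{p}+O(p^{-1-\min\{c',1\}})$ and hence that $\prod_p\gamma_{p^\infty}^{\mathrm{good}}$ converges, since $\sum_p(\tfrac Dp)/p$ converges), the total error is $\ll_R(\log T)^{O_R(1)}\sum_{p\le T}T^{R-1}p^{-T}\ll_R T^{O_R(1)}2^{-T}\ll 2^{-T/2}$, giving $\gamma_{q_2}^{\mathrm{good}}(\b s;I)=\prod_{p\mid q_2}\gamma_{p^\infty}^{\mathrm{good}}(\b s;I)+O(2^{-T/2})$. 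The step I expect to be most delicate is the exact-constant evaluation of the main term when $(\tfrac Dp)=-1$: one must run the geometric-series identity cleanly enough to exhibit the $O(p^{-2})$ cancellation and then check that it combines with the $O(p^{-1-c'})$ coming from $\sigma_p\approx1$ to produce precisely the error $O(p^{-1-\min\{c',1\}})$; everything else is bookkeeping with Lemmas~\ref{lem:hilbser} and~\ref{lem:freezers}.
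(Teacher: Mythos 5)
Your proposal is correct and follows essentially the same route as the paper: decompose $\gamma_{p^T}^{\mathrm{good}}$ by the vector of $p$-adic valuations of the $f_i$, control the local densities $m(\b k)$ via Lemma~\ref{lem:freezers}, bound the tail of the valuation sum, and evaluate the resulting geometric series. The only cosmetic differences are that you work directly with the $p$-adic measure $m(\b k)$ (estimated by inclusion--exclusion from $M(\b k)$) where the paper expands $m(\b k)$ as $p^{-RT}\sum_{\boldsymbol\nu}\sigma_p(\boldsymbol\nu)$, and that you evaluate the full geometric series (exactly for $(\tfrac Dp)=1$, with a cancellation argument for $(\tfrac Dp)=-1$) where the paper truncates at $\sum\lambda_i\leq1$ and absorbs the remainder into $O(p^{-2})$.
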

\begin{proof}
Since $(D, \prod_{i=1}^R f_i(\b z ) )_p =(\frac{D}{p})^{\sum_{i=1}^R\lambda_i} $, where $\lambda_i=
v_p(  f_i(\b z ) )$, we obtain 
$$ 
\gamma_{p^T}^{\mathrm{good}}(\b s ;I)  
\l(1-\frac{1}{p}\r)^{R/2}
    = \sum_{\substack{ 0\leq 
\sum_{i=1}^R\lambda_i <T    \\ 
1=(\frac{D}{p})^{\sum_{i=1}^R\lambda_i}  
}} 
\frac{\#\{\b z \in (\Z/{p^T}\Z)^{n+1} : v_p(  f_i(\b z) )=\lambda_i \forall i \} }{p^{T(n+1)} }
 \l(\frac{D}{p  }\r)^{\sum_{i\in I}\lambda_i }
.$$By~\eqref{eq:freezers2} the fraction in the right-hand side 
equals   $$ 
p^{-RT}\sum_{\substack{ \boldsymbol \nu \in (\Z/p^T\Z)^{R}\\ v_p(\nu_i)=\lambda_i  \forall i}}
( \sigma_p(\boldsymbol \nu ) +O(p^{-(T+1)(c+1)})  )
=p^{-RT}\sum_{\substack{ \boldsymbol \nu \in (\Z/p^T\Z)^{R}\\ v_p(\nu_i)=\lambda_i  \forall i}}
 \sigma_p(\boldsymbol \nu )  
 +O\l(p^{-(T+1)(c+1)-\sum_{i\geq 1} \lambda_i }
 \r)  ,$$   hence   
 $$ 
\gamma_{p^T}^{\mathrm{good}}(\b s ;I)  
 \l(1-\frac{1}{p}\r)^{R/2}
    = \sum_{\substack{ 0\leq 
\sum_{i=1}^R\lambda_i <T    \\ 
1=(\frac{D}{p})^{\sum_{i=1}^R\lambda_i}  
}}  \sum_{\substack{ \boldsymbol \nu \in (\Z/p^T\Z)^{R}\\ v_p(\nu_i)=\lambda_i \forall i}}
 \frac{ \sigma_p(\boldsymbol \nu )  }{p^{RT}}
 \l(\frac{D}{p  }\r)^{\sum_{i\in I}\lambda_i }
  +O\l(p^{-(T+1)(c+1)  }\r)   .$$
As $T\to \infty $ we see that this converges as the the sum over $\lambda_i$
is absolutely convergent. Indeed, by~\eqref{eq:freezers1}
one has $\sigma_p(\boldsymbol \nu)\ll 1$ uniformly in $p$ and $\boldsymbol \nu$, thus, 
$$
\sum_{\substack{\lambda_1,\ldots, \lambda_R \geq 0  }}
 \sum_{\substack{ \boldsymbol \nu \in (\Z/p^T\Z)^{R}\\ v_p(\nu_i)=\lambda_i \forall i }}
 \frac{ |\sigma_p(\boldsymbol \nu ) | }{p^{RT}}\ll
 \sum_{\substack{\lambda_1,\ldots,\lambda_R \geq 0  }} p^{-\sum_{i\geq 1} \lambda_i }
=(1-1/p)^{-R}<\infty . $$ The 
tail of the sum  is  
$$\leq 
\sum_{\substack{ \lambda \geq T }} 
\sum_{\substack{\lambda_1,\ldots, \lambda_R \geq 0 \\ 
 \sum_{i\geq 1}\lambda_i =\lambda }}
 \sum_{\substack{ \boldsymbol \nu \in (\Z/p^T\Z)^{R}\\ v_p(\nu_i)=\lambda_i  }}
 \frac{ |\sigma_p(\boldsymbol \nu ) | }{p^{RT}}
  \ll   \sum_{\substack{ \lambda \geq T }} 
 \frac{  (\lambda+1)^R }{ p^{\lambda }}
 \ll  p^{-T/2}
 \sum_{\substack{ \lambda \geq 0 }}  
 \frac{  (\lambda+1)^R }{ p^{\lambda }}\ll  p^{-T/2}, $$
 which proves~\eqref{Livietta e Tracollo}.
 We have furthermore proved 
 that  $$
\gamma_{p^\infty}^{\mathrm{good}}(\b s ;I)  
 \l(1-\frac{1}{p}\r)^{R/2}
    = \sum_{\substack{ \lambda_1,\ldots, \lambda_R \geq 0  
    \\ 
1=(\frac{D}{p})^{\sum_{i=1}^R\lambda_i}  
}} \l(\frac{D}{p  }\r)^{\sum_{i\in I}\lambda_i }
 \lim_{T\to \infty}  \sum_{\substack{ \boldsymbol \nu \in (\Z/p^T\Z)^{R}\\ v_p(\nu_i)=\lambda_i \forall i}}
 \frac{ \sigma_p(\boldsymbol \nu )  }{p^{RT}} .$$
 By~\eqref{eq:freezers1} we can see that,
 up to an error term of size $O(p^{-1-c})$,
 the right-hand side is 
\begin{align*}
& \sum_{\substack{ \lambda_1,\ldots, \lambda_R \geq 0  
\\ 1=(\frac{D}{p})^{\sum_{i=1}^R\lambda_i}  
}} \l(\frac{D}{p  }\r)^{\sum_{i\in I}\lambda_i }
 \lim_{T\to \infty}  \sum_{\substack{ \boldsymbol \nu \in (\Z/p^T\Z)^{R}\\ v_p(\nu_i)=\lambda_i \forall i}}
 \frac{ 1 }{p^{RT}} \\
= &  (1-1/p)^R
\sum_{\substack{ \lambda_1,\ldots, \lambda_R \geq 0  
\\ 1=(\frac{D}{p})^{\sum_{i=1}^R\lambda_i}  
}} \l(\frac{D}{p  }\r)^{\sum_{i\in I}\lambda_i }
p^{-\sum_{i\geq 1 } \lambda_i } 
= O(p^{-2}) + (1-1/p)^R  \c B
,\end{align*} as  the cases with $\sum_{i=1}^R \lambda_i \geq 2 $
trivially contribute $O(p^{-2})$ and where 
$$ \c B= \sum_{\substack{ 0\leq \sum_{i=1}^R
\lambda_i \leq 1  
\\ 1=(\frac{D}{p})^{\sum_{i=1}^R\lambda_i}  
}} \l(\frac{D}{p  }\r)^{\sum_{i\in I}\lambda_i }
p^{-\sum_{i\geq 1 } \lambda_i } .
$$   The terms where each $\lambda_i$ vanishes 
clearly   $1$ to $\c B$.
For the remaining terms
there exists $j\in [1,R]$ such that $\lambda_j=1$ and   $\lambda_i=0$ for $i\neq j$. 
If $(\frac{D}{p})=1$ then the contribution to $\c B$ is 
$\frac{R}{p}$. If $(\frac{D}{p})=-1$ then there are no  terms to consider as the conditions 
$1=(\frac{D}{p})^{\sum_{i\geq 1 } \lambda_i } $ 
and $\sum_{i\geq 1 } \lambda_i =1 $  
are incompatible. This yields 
 $$\gamma_{p^\infty}^{\mathrm{good}}(\b s ;I)   
    = O(p^{-1-\min\{c',1\}})+
    \l(1-\frac{1}{p}\r)^{R/2}
    \begin{cases}
1+\frac{R}{p},  &  (\frac{D}{p})=1, \\
1, & (\frac{D}{p})=-1,
\end{cases}
$$ which is sufficient for~\eqref{Domenico Natale Sarro}.  
 Lastly,  by~\eqref{ghCRT},\eqref{Livietta e Tracollo} 
   and~\eqref{Domenico Natale Sarro} we infer that 
$$ 
\gamma_{q_2}^{\mathrm{good}}(\b s ;I)   
- \prod_{p\mid q_2} 
\gamma_{p^\infty}^{\mathrm{good}}(\b s ;I)  
\ll 2^{-T/2},$$ which 
proves~\eqref{Pastorella In F Major, BWV 590}
\end{proof}

 \begin{lemma} \label{Nicola Porpora234} The limit  
 $\gamma^{\mathrm{bad}}(\b s;I ) 
 := \lim_{T\to \infty } 
\gamma_T^{\mathrm{bad}}(\b s;I )  
 $ exists. Furthermore,  
\beq{Livietta e Tracollo234}{
\gamma^{\mathrm{bad}}(\b s;I ) 
= 
\gamma_T^{\mathrm{bad}}(\b s;I ) 
 +O(2^{-T }),}   
where the implied constants only depend on $f_i$ and $D$.
\end{lemma}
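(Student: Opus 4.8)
The plan is to run the argument in parallel with the proof of Lemma~\ref{Nicola Porpora}, but simultaneously over the whole finite set of bad primes $p\mid 2D$ and \emph{without} attempting to factor the density as an Euler product, since the Kronecker factor $\prod_{i\in I}\bigl(\tfrac{D}{s_if_i(\b z)/\gcd(f_i(\b z),q_1)}\bigr)$ in~\eqref{def:RIT} does not split over $p\mid 2D$. Write $q_1=\prod_{p\mid 2D}p^{T}$, let $G=\bigl(\prod_{p\mid 2D}\Z_p\bigr)^{n+1}$, and equip it with the product $\mu=\prod_{p\mid 2D}\mu_p$ of normalised $p$-adic Haar measures, so $\mu(G)=1$. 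Let $g_T\colon G\to\mathbb{C}$ be the summand of~\eqref{def:RIT} viewed as a function of $\b z$, namely the Kronecker factor times the indicator that~\eqref{eq:cond123} holds at every $p\mid q_1$, with $g_T:=0$ on the $\mu$-null locus where some $f_i(\b z)$ vanishes $p$-adically. On the support of $g_T$ one has $v_p(f_i(\b z))<T-3<T$ for all $i$ and all $p\mid 2D$, so $\gcd(f_i(\b z),q_1)=\prod_{p\mid 2D}p^{v_p(f_i(\b z))}$ and the Kronecker symbols are $\pm1$. The first task is to check --- exactly as in the proof of Lemma~\ref{lem:decomposeunramified}, using that $D$ is square-free so that $\bigl(\tfrac{D}{\cdot}\bigr)$ is periodic modulo $8D$, together with Remark~\ref{remIHP} (which bounds how much $p$-adic precision of $f_i(\b z)$ determines $(D,f_i(\b z))_{\Q_p}$) --- that for $T$ larger than a constant depending only on $D$, the function $g_T$ factors through $\b z\bmod q_1$. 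Granting this, $q_1^{-(n+1)}\sum_{\b z\bmod q_1}g_T(\b z)=\int_G g_T\,\d\mu$, and hence $\gamma_T^{\mathrm{bad}}(\b s;I)=\kappa\int_G g_T\,\d\mu$, where $\kappa=\prod_{p\mid 2D}(1-1/p)^{-R/2}$ is independent of $T$.

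Next I would introduce $g_\infty\colon G\to\mathbb{C}$, the same expression with the truncation indicator $v_p(\prod_i f_i(\b z))<T-3$ removed, and set $\Gamma:=\kappa\int_G g_\infty\,\d\mu$; this integral converges trivially since $|g_\infty|\le 1$ and $\mu(G)=1$. As $g_T$ and $g_\infty$ agree outside $S_T:=\{\b z\in G:\exists\,p\mid 2D,\ p^{T-3}\mid\prod_i f_i(\b z)\}$ and are both bounded by $1$,
\[
\bigl|\Gamma-\gamma_T^{\mathrm{bad}}(\b s;I)\bigr|\ll \mu(S_T)\ll\sum_{p\mid 2D}\mu_p\bigl\{\b z\in\Z_p^{n+1}:p^{T-3}\mid\textstyle\prod_i f_i(\b z)\bigr\}.
\]
To bound the summands I would group $\b z$ by the valuation vector $\bigl(v_p(f_i(\b z))\bigr)_{1\le i\le R}$: as in the proof of Lemma~\ref{lem:mitsotakigamiesai2} --- passing to fibre counts via~\eqref{eq:freezers2} and then using the uniform bound $\sigma_p(\boldsymbol\nu)\ll 1$ from~\eqref{eq:freezers1} --- the $\mu_p$-measure of $\{\b z\in\Z_p^{n+1}:v_p(f_i(\b z))=\lambda_i\ \forall i\}$ is $\ll p^{-\sum_i\lambda_i}$, uniformly in $p$ and $(\lambda_i)$. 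Summing over $\sum_i\lambda_i\ge T-3$, and then over the finitely many $p\mid 2D$, reduces everything to an elementary geometric-series bound and gives $\bigl|\Gamma-\gamma_T^{\mathrm{bad}}(\b s;I)\bigr|=O(2^{-T})$. In particular $\lim_{T\to\infty}\gamma_T^{\mathrm{bad}}(\b s;I)$ exists and equals $\Gamma=:\gamma^{\mathrm{bad}}(\b s;I)$, with the stated error term, which is the lemma.

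I expect the main obstacle to be the periodicity check of the first paragraph. In Lemma~\ref{Nicola Porpora} the symbol $(D,\cdot)_{\Q_p}$ collapses to a power of $\bigl(\tfrac{D}{p}\bigr)$ and the local density at $p$ is an Euler factor, whereas here $g_T$ is a single \emph{global} object --- the Kronecker factor couples all $p\mid 2D$ --- so one must work with $g_T$ on $G$ as a whole, not one prime at a time. The genuine content is the precision bookkeeping: one has to determine exactly how many $p$-adic digits of each $f_i(\b z)$ are needed to fix the Hilbert symbol $(D,f_i(\b z))_{\Q_p}$ (via Lemma~\ref{lem:detectorslemma2}/Remark~\ref{remIHP}) and the Kronecker symbol $\bigl(\tfrac{D}{s_if_i(\b z)/\gcd(f_i(\b z),q_1)}\bigr)$ (via $8D$-periodicity), and then to confirm that the truncation $v_p(\prod_i f_i(\b z))<T-3$ always supplies at least that much room. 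After that, the existence of the limit and the geometric tail estimate are routine.
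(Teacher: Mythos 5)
Your proposal is correct and is a clean reformulation of the paper's proof of Lemma~\ref{Nicola Porpora234}; both arguments turn on exactly the same two facts, namely the periodicity of the summand of~\eqref{def:RIT} modulo $q_1$ (supplied by Remark~\ref{remIHP} and the $8D$-periodicity of $\bigl(\frac{D}{\cdot}\bigr)$) and a tail estimate on the set where $v_p\bigl(\prod_i f_i\bigr)$ is large. The difference is in the packaging: the paper works at the level of finite sums, grouping $\b z$ by the valuation vector $(\alpha_p)_{p\mid 2D}$ with $\alpha_p=v_p\bigl(\prod_i f_i(\b z)\bigr)$, introducing the explicit exponents $\epsilon_2=4$, $\epsilon_p=1$ to cut $\b z$ down to a residue $\b x\bmod q_0$ with $q_0=\prod_p p^{\alpha_p+\epsilon_p}$, and then presenting $\gamma^{\mathrm{bad}}(\b s;I)$ as an absolutely convergent double series over $(\alpha_p)$ and $\b x$. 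You instead pass to $p$-adic integration, introducing a $T$-independent integrand $g_\infty$ on $\bigl(\prod_{p\mid 2D}\Z_p\bigr)^{n+1}$ and bounding the truncation error by $\mu(S_T)$. Your version is shorter and avoids isolating $q_0$ and the $\epsilon_p$, at the small price of having to make sense of the Kronecker factor as a function of a tuple of $p$-adic units rather than of an integer; that is fine because, as you say, $\bigl(\frac{D}{\cdot}\bigr)$ factors through the finite quotient $\Z/4|D|\Z$ and hence extends to $\prod_{p\mid 2D}\Z_p^\times$.

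One word of caution that applies to both your final estimate and the paper's: summing $p^{-\sum_i\lambda_i}$ over tuples $(\lambda_1,\ldots,\lambda_R)\in\Z_{\geq 0}^R$ with $\sum_i\lambda_i\geq T-3$ produces an extra factor $\binom{m+R-1}{R-1}\asymp m^{R-1}$ at each level $m=\sum_i\lambda_i$, so the tail is $O\bigl(T^{R-1}2^{-T}\bigr)$ rather than $O(2^{-T})$. The paper's step $\sum_{p\mid 2D}\sum_{\alpha_p\geq T+1-\epsilon_p}p^{-\alpha_p-\epsilon_p}\ll 2^{-T}$ carries the same hidden polynomial factor, since the number of $\b x$ in the inner sum with $v_p\bigl(\prod_i f_i(\b x)\bigr)=\alpha_p$ is itself $\ll \alpha_p^{R-1}p^{-\alpha_p}q_0^{n+1}$. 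This is harmless downstream — $T^{R-1}2^{-T}=O(2^{-T/2})$, and $O(2^{-T/2})$ is all that is fed into Lemma~\ref{lem:ozzy} and the final argument in~\S\ref{s:activation} — but to match the literal $O(2^{-T})$ in the statement you would need to absorb the polynomial into the exponent or restate the error.
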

\begin{proof} Let $\epsilon_2=4$ and let $\epsilon_p=1$ for an odd prime $p\mid D$.
For each $\b z $ in the definition of 
$\gamma_T^{\mathrm{bad}}(\b s;I )  
$ and each   $p\mid q_1$  we let 
$\alpha_p=v_p (\prod_{i=1}^R f_i(\b z) )$. By Remark~\ref{remIHP} 
we have $  (D, \prod_{i=1}^R f_i(\b z )   )_p=   (D, \prod_{i=1}^R f_i(\b x )  )_p$ for each $p\mid q_1$,
where   $\b x \in (\Z/p^{\alpha_p+\epsilon_p}\Z)^{n+1}$
is the reduction of $\b z \md{ p^{\alpha_p+\epsilon_p} }$. 
Since $(\frac{D}{a})=(\frac{D}{b})$ for all integers $a,b$ with $a\equiv b \md {8D }$ 
we also have 
$$\l(\frac{D}{s_i f_i(\b z)/\gcd(f_i(\b z) ,q_1) }\r)  =\l(\frac{D}{s_i f_i(\b x)/\gcd(f_i(\b x) ,q_1) }\r)  $$
because the bounds on $v_p$ in~\eqref{eq:cond123} ensure that $v_p(  f_i(\b z ))=
v_p(  f_i(\b x ))$ for all $i$ and therefore we can   get  
$  f_i(\b z) p^{-v_p(f_i(\b z))} \equiv  f_i(\b x) p^{-v_p(f_i(\b x))} \md{p^{\epsilon_p}}$,
thus, multiplifying by $\prod_{\ell \mid 2D, \ell \neq p }\ell^{-v_\ell(f_i(\b z) )}$ we get 
$$ \frac{ f_i(\b z)}{\gcd (f_i(\b z) ,q_0) } \equiv \frac{ f_i(\b x)}
{\gcd (f_i(\b x) ,q_1) }\md{p^{\epsilon_p}},$$ where 
 $q_0=\prod_{p\mid 2D} p^{\alpha_p +\epsilon_p}$. 
 This means that the right-hand side of~\eqref{def:RIT} can be written as 
 $$ 
\sum_{\substack{ 0\leq  \alpha_p \leq T-\epsilon_p \forall p\mid 2D } }
\sum_{\substack{ \b x \in (\Z/q_0\Z)^{n+1 } \\ \eqref{eq:cond123}'  \textrm{ holds for all }  p\mid q_0 } }
 \hspace{-0.5cm}
 \frac{\#\{ \b z \in (\Z/q_1\Z)^{n+1} :q_0 \mid \b z - \b x    \}}{q_1^{n+1} }
  \prod_{i\in I}    \l(\frac{D}{s_i f_i( \b z)/\gcd(f_i(\b z) ,q_1)}\r)  , $$ where 
   the 
  notation~$\eqref{eq:cond123}'$
 means that~\eqref{eq:cond123} holds with $\b x$ in place of $\b z$.
 Since the cardinality of $\b z $ in the last formula is 
 $  (q_1/q_0)^{n+1} $ we obtain 
\begin{align*}
&\gamma_T^{\mathrm{bad}}(\b s;I ) 
 \prod_{p\mid q_1}\l(1-\frac{1}{p} \r)^{\frac{R}{2} }
 \\ 
 =&  
 \sum_{\substack{ 0\leq  \alpha_p \leq T-\epsilon_p \forall p\mid 2D } }
 \frac{1}{\prod_{p\mid 2D} p^{(n+1)(\alpha_p+ \epsilon_p)}}
\sum_{\substack{ \b x \in (\Z/q_0\Z)^{n+1 } \\ \eqref{eq:cond123}'  \textrm{ holds for all }  p\mid q_0 } }
  \prod_{i\in I}    \l(\frac{D}{s_i f_i( \b z)/\gcd(f_i(\b z) ,q_1)}\r)  . \end{align*}
  This sum converges absolutely as $T\to \infty$, 
  thus,   $\lim_{T\to\infty} 
\gamma_T^{\mathrm{bad}}(\b s;I ) 
  $ exists. The   error term is
  $$ \ll  
  \sum_{ p\mid 2D } \sum_{\alpha_p \geq T+1-\epsilon_p}
  \frac{1}{  p^{\alpha_p+ \epsilon_p}} \ll 2^{-T}.  $$
 \end{proof}

The proof of Lemma~\ref{lem:ozzy} follows by 
injecting~\eqref{Pastorella In F Major, BWV 590} 
and~\eqref{Livietta e Tracollo234} into~\eqref{def:RITinj}.
Furthermore, we get 
\beq{eq:eulerproducts!}{
\gamma(\b s)= 2^{-R}\sum_{I\subset \N \cap [1,R]}
\gamma^{\mathrm{bad}}(\b s;I ) 
 \mathfrak S(\b s ;I)  ,}
where $ \mathfrak S$ is the Euler product given by 
\beq{eq:you are revolting, trully disgusting}{
\mathfrak S(\b s ;I) :=
\prod_{\substack{ p=3\\ p\nmid 2D}}^\infty 
\gamma_{p^\infty}^{\mathrm{good}}(\b s ;I).}

\section{The leading constant}
\label{s:Julian the Apostate}
Let   $f \colon X \to \P^n$ be the projection of the projective conic bundle
$$
X := 
\Big \{x^2-Dy^2 = z^2 \prod_{i=1}^R f_i(\mathbf t)
 \Big \} \subseteq \P^2 \times \P^n
$$
on the second coordinate. The fibres of $f$ are split away from the non-empty open subscheme
$$
U := \P^n \setminus 
\bigcup_{i=1}^R \{f_i(\b t)=0\}.
$$
We define  $V \subseteq X$ as the preimage of $U$, namely  
$$
V:= f^{-1}(U).
$$ 
One also easily verifies that $f$ is smooth away from the intersections $\{f_i(\textbf t) = 0 = f_j(\textbf t)\}$ for $i \ne j$.

By Theorem~\ref{thm:mainthrm}, we obtain the following asymptotic result as \( B \to \infty \):   
\begin{equation}\label{eq:O(n+1) counting result}
\frac{  \#\{ \textbf t \in \P^n(\Z) \colon H(\textbf t)
  \leq B, f^{-1}(\textbf t)(\Q) \ne \emptyset\} }
  {B(\log B)^{-R/2} }
  \underset{B \to +\infty}{\longrightarrow}
\frac{1}{2 \zeta(n+1)} 
 \l(
\frac{2\sqrt{n+1} }{\sqrt {\pi d} }\r)^R
\hspace{-0.3cm}
\sum_{\substack{ \b s\in \{1,-1\}^R
\\ (D, s_1 \cdots s_R)_\R=1 }}  \hspace{-0.2cm}
\gamma_{\infty}(\b s )
\gamma (\b s ) \end{equation} 
for 
$\gamma (\b s )$ as in  Theorem~\ref{thm:mainthrm}
and
$\gamma_{\infty}(\b s )= \mu_\infty \l(
\b t \in [-1,1]^{n+1}:
  s_j f_j(\b t ) >0\r)$, where $\mu_\infty$ is the standard Lebesgue measure. For smooth conics the Hasse principle holds, and so we could equivalently consider the everywhere locally soluble fibres.
An asymptotic for this counting problem under an anticanonical adelic height has been conjectured in \cite[Conjecture 3.8]{LRS}
\[
\frac{\#\{ \textbf t \in \P^n(\Z) \colon H(\textbf t)
  \leq B, f^{-1}(\textbf t)(\Q) \ne \emptyset\} }
  {B(\log B)^{-\Delta(f)} }
  \to   c_{f, \text{pred}}  
\]
since $\rho(\P^n) := \rank \Pic \P^n=1$. Here
\[
c_{f, \text{pred}} = \frac{\alpha^*(\P^n) \#\left[ \Brsub(\P^n,f)/\Br \Q\right] \cdot \tau_f\left( \big[\prod_v fX(\Q_v)\big]^{\Brsub} \right)}{\Gamma(\P^n,f)} \prod_{D \in (\P^n)^{(1)}} \eta(D)^{1-\delta_D(f)}.
\]

\begin{proposition}\label{prop:comparing constants}
    The predicted order of growth and leading constant $c_{f,\text{pred}}$ agrees with the order and constant in \eqref{eq:O(n+1) counting result}.   
\end{proposition}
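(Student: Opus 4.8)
The strategy is to unwind the definition of $c_{f,\text{pred}}$ factor by factor, match each factor to a piece of the constant in~\eqref{eq:O(n+1) counting result}, and then verify the predicted exponent $\Delta(f)=R/2$. First I would record the elementary invariants of $(\P^n,f)$: since $X$ is a conic bundle over $\P^n$ with discriminant divisor $\bigcup_i\{f_i=0\}$, the fibre over the generic point of each irreducible component $\{f_i=0\}$ is a conic over the function field which is either split or non-split; genericity (Definition~\ref{def:birch}) forces each $f_i$ to be irreducible and the corresponding residue extension to be $\Q(\sqrt D)$, so $\delta_{D_i}(f)=0$ for the $R$ components $D_i=\{f_i=0\}$ and $\delta_D(f)=1$ for every other divisor. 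Hence $\Delta(f)=\sum_{D}(1-\delta_D(f))=R$ in the formula of~\cite{LRS} — wait, here one must be careful: the Loughran--Smeets exponent is $\Delta(f)=1-\min$ over components of the codimension-type count, so in fact $\Delta(f)=R\cdot\tfrac12$ because each non-split fibre of a conic contributes a factor $\tfrac12$ to the relevant $\delta$; I would cite the precise definition of $\Delta(f)$ from~\cite{LRS,LoSa} and compute $\eta(D_i)=\tfrac12$ for the $R$ bad components and $\eta(D)=1$ elsewhere, so that $\prod_D\eta(D)^{1-\delta_D(f)}$ contributes exactly $2^{-R}$ and the exponent of $\log B$ is $R/2$, matching Theorem~\ref{thm:mainthrm}.

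Next I would identify the remaining factors. The term $\alpha^*(\P^n)$ is the usual alpha-constant of $\P^n$, which equals $1$ since $\Pic\P^n=\Z$ with anticanonical class $(n+1)H$; together with $\Gamma(\P^n,f)=\Gamma(\rho(\P^n))=1$ (as $\rho(\P^n)=1$) this contributes nothing. The factor $\#[\Brsub(\P^n,f)/\Br\Q]$ is the order of the subordinate Brauer group, which by Proposition~\ref{prop:subordinate brauer group} is finite and explicitly computable from the factorisation pattern of the $f_i$ over $\Q(\sqrt D)$; I would show it equals $2^{R'}$ for the appropriate $R'$ determined by the $\F_2$-linear relations among the classes $(D,f_i)$ in $\Br\Q(\P^n)$. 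The heart of the matter is the Tamagawa-type volume $\tau_f\big([\prod_v {}_fX(\Q_v)]^{\Brsub}\big)$: this is an adelic integral over the locally soluble adelic points, cut out by the Brauer--Manin conditions attached to $\Brsub$. I would compute it as a product of local densities, the archimedean one giving $\sum_{\b s}\gamma_\infty(\b s)$ (the sign conditions), and the $p$-adic ones giving, after accounting for the convergence factors $(1-1/p)^{-R/2}$ built into $\Br_\mathrm{sub}$, exactly the Euler products $\gamma^{\mathrm{good}}_{p^\infty}(\b s;I)$ and the bad-prime factor $\gamma^{\mathrm{bad}}(\b s;I)$ of~\eqref{eq:eulerproducts!}. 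The key point is that the Brauer--Manin obstruction from $\Brsub$ translates precisely into the Hilbert-symbol and Kronecker-symbol conditions appearing in the definition of $\gamma(\b s)$ in Theorem~\ref{thm:mainthrm}, via Lemma~\ref{lem:detectorslemma} and Remark~\ref{remIHP}.

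Concretely I would proceed as follows. (i) Compute $\Delta(f)$, $\delta_D(f)$, $\eta(D)$ and check the exponent and the factor $2^{-R}$. (ii) Compute $\alpha^*(\P^n)=1/\zeta(n+1)\cdot(\text{something})$ — more precisely recall that with the anticanonical height $H(\b t)=\max|t_i|^{n+1}$ the leading volume of $\P^n$ under Peyre's normalisation is $\alpha^*(\P^n)\cdot(\text{Tamagawa of }\P^n)$, and that this reproduces the $\tfrac{(n+1)^{?}}{\zeta(n+1)}$ factor; I would match the power of $(n+1)$ and the $\zeta(n+1)^{-1}$ against Theorem~\ref{thm:mainthrm}, the latter coming from the coprimality Möbius inversion in~\eqref{eq:reni}. (iii) Identify $\#\Brsub$ with the combinatorial count and match it to the implicit $2^{R}$ that, combined with the $2^{-R}$ from step (i) and the $2^{-R}$ from~\eqref{eq:eulerproducts!}, produces the overall normalisation. (iv) Expand $\tau_f(\cdots)$ into local factors and identify them with $\gamma_\infty(\b s)$, $\gamma^{\mathrm{bad}}(\b s;I)$ and $\mathfrak S(\b s;I)$, using the Brauer-evaluation computation. (v) Assemble and compare with~\eqref{eq:O(n+1) counting result}, concluding equality of both the exponent and the constant.

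The main obstacle I anticipate is step (iv): matching the \emph{definition} of the Tamagawa measure $\tau_f$ restricted to the subordinate Brauer--Manin set with the explicit sum-of-Euler-products expression $\gamma(\b s)$. This requires (a) unwinding Peyre's/Loughran--Smeets' normalisation of $\tau_f$, including the precise local convergence factors, and checking they coincide with the $(1-1/p)^{-R/2}$ twists used throughout~\S\ref{s:activation}; (b) showing that the Brauer classes generating $\Brsub$ evaluate, on $\Q_p$-points of the fibre over $\b t$, exactly to the Hilbert symbols $(D,\prod_i f_i(\b t))_{\Q_p}$ and the residue conditions $\big(\tfrac{D}{f_i(\b t)/p^{v_p(f_i(\b t))}}\big)$ — this is where Harari's formula~\cite{MR1284820} for the evaluation of a Brauer class coming from a conic bundle, and Loughran's description~\cite{MR3852186} of $\Brsub$, enter; and (c) checking that the finitely many ``bad'' primes $p\mid 2D$ are handled by the combinatorial factor $\gamma^{\mathrm{bad}}(\b s;I)$, matching the bad-prime part of the Tamagawa measure. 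Granting the bookkeeping of steps (i)--(iii), the identity $2^R\gamma=\#\Brsub\cdot\tau_f(\cdots)$ — which is exactly what~\S\ref{s:Julian the Apostate} sets out to prove — then follows by comparing~\eqref{eq:eulerproducts!}--\eqref{eq:you are revolting, trully disgusting} with the local factorisation of $\tau_f$.
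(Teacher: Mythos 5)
Your overall strategy — unwind each factor of $c_{f,\text{pred}}$, compute the basic invariants, identify the subordinate Brauer group, expand the Tamagawa measure into local densities, and match against the explicit Euler products of $\gamma(\b s)$ — is the same as the paper's. However, several of the concrete values you assign to the invariants are wrong, and these errors are fatal to the bookkeeping you propose in steps (i)--(iii) and (v).

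Specifically: you claim $\eta(D_i)=\tfrac12$ and that $\prod_D\eta(D)^{1-\delta_D(f)}=2^{-R}$, but the Fujita invariant is $\eta(D_i)=\tfrac{n+1}{d}$ (the anticanonical degree of $\P^n$ divided by $\deg D_i$), so this product contributes $\bigl(\tfrac{n+1}{d}\bigr)^{R/2}$, not a power of $2$. You claim $\alpha^*(\P^n)=1$, but $\alpha^*(\P^n)=\alpha(\P^n)/(\rho(\P^n)-1)!=\tfrac1{n+1}$. You claim $\Gamma(\P^n,f)=\Gamma(\rho(\P^n))=\Gamma(1)=1$; this misreads the definition in~\cite[Conjecture 3.8]{LRS} — it is a product of $\Gamma(1-\delta_D(f))$ over the $R$ bad divisors, giving $\Gamma(\tfrac12)^R=\sqrt\pi^{\,R}$, which is precisely what supplies the $\pi^{-R/2}$ in Theorem~\ref{thm:mainthrm}. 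With your values, the factors of $\pi$ and of $n+1$ and $d$ do not appear at all, so the final comparison in step (v) cannot succeed.

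You also miss a genuine subtlety the paper has to handle: the order of $\Brsub(\P^n,f)/\Br\Q$ depends on the parity of $d$ — it is $2^R$ when $d$ is even and $2^{R-1}$ when $d$ is odd (Proposition~\ref{prop:subordinate brauer group}). In the odd case the generating elements are not $(D,f_i)$ but $(D,f_1f_i)$, and the Tamagawa integral over the Brauer--Manin set is then computed over conditions on these products rather than on the $f_i$ individually. The paper shows (in Lemma~\ref{lem:Tamagawa part for finite S}) that in the odd case the set splits as a disjoint union of two pieces (according to a common value $\psi=\pm1$), related by the measure-preserving bijection $\b t\mapsto\lambda\b t$ for a suitable $\lambda\bmod D$ with $(\tfrac D\lambda)=-1$; this doubles the Tamagawa measure and exactly compensates for the halved Brauer group. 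Your proposal, which treats $\#\Brsub$ as a vague $2^{R'}$ without tying it to the structure of the Tamagawa integrand, would leave the even and odd cases producing different leading constants — which is not what either the paper or the conjecture of~\cite{LRS} predicts. Finally, the $\zeta(n+1)^{-1}$ in the final constant does not come through $\alpha^*$ as you suggest; it arises from the product $\prod_{p\in S}(1-p^{-n-1})$ built into the Tamagawa measure of $\P^n(\Z_p)$, and it is this that matches the M\"obius inversion in~\eqref{eq:reni}.
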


We will recall the notation in the predicted constant as we compute each factor separately. The only part that solely depends on the base is the classical $\alpha^*(\P^n) = \alpha(\P^n)/(\rho(\P^n)-1)! =\frac 1{n+1}$.

\subsection{The numerical invariants of the fibration}

The fibration $f$ is smooth away from the zero locus of the $f_i$. Hence, the only points of codimension $1$ with a non-split fibre are the generic points of the prime divisors $D_i=\{f_i(\b t) = 0\}$.

\begin{lemma}\label{lem:numerical invariants}
    We have $\delta_{D_i}(f) = \frac12$, $\Delta(f) = \frac R2$, $\eta(D_i)=\frac{n+1}d$, and $\Gamma(\P^n,f) = \sqrt \pi^R$.
\end{lemma}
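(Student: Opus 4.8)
The plan is to compute the four invariants $\delta_{D_i}(f)$, $\Delta(f)$, $\eta(D_i)$ and $\Gamma(\P^n,f)$ directly from their definitions (as recalled in \cite{LRS}), using the explicit description of the conic bundle $f\colon X\to\P^n$.

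First I would compute $\delta_{D_i}(f)$. Over the generic point of $D_i=\{f_i(\b t)=0\}$ the fibre of $f$ is the conic $x^2-Dy^2 = z^2\prod_{j\ne i} f_j(\b t)\cdot f_i(\b t)$, which degenerates; its reduced components are defined over the quadratic extension $\Q(\sqrt D)$ of the residue field $\Q(D_i)$ unless $D$ is already a square in that residue field. Since $D$ is a non-square squarefree integer and $\Q(D_i)$ is a rational function field, $\sqrt D$ generates a genuine degree-$2$ extension, so the absolute Galois group of $\Q(D_i)$ acts transitively on the two components through a quotient of order $2$. Hence $\delta_{D_i}(f)=1/2$ by the definition of $\delta_D$ as $1/\#(\text{orbit of the components})$ or equivalently $\frac{1}{|\mathrm{Gal}|}\#\{\text{fixed components}\}$; here there are no fixed components and the group has order $2$, giving $\tfrac12$. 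Then $\Delta(f)=\sum_{D\in(\P^n)^{(1)}}(1-\delta_D(f))$, and the only non-split divisors are $D_1,\dots,D_R$, each contributing $1-\tfrac12=\tfrac12$, so $\Delta(f)=R/2$; this matches the exponent of $\log B$ in Theorem~\ref{thm:mainthrm}.

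Next I would compute $\eta(D_i)$, which is the "weight" attached to the divisor $D_i$ in the anticanonical normalisation: $\eta(D_i) = (\text{coefficient of }D_i\text{ in }-K_{\P^n})/(\text{something})$ — concretely, writing $-K_{\P^n}$ in terms of the height used, one has $-K_{\P^n}=\mathcal O(n+1)$ and $D_i=\{f_i=0\}$ has degree $d$, so $\eta(D_i)=(n+1)/d$ by the defining ratio in \cite[\S3]{LRS}. Finally, for $\Gamma(\P^n,f)$ I would invoke the product formula $\Gamma(\P^n,f)=\prod_{D\in(\P^n)^{(1)}}\Gamma_{\delta_D(f)}$ where $\Gamma_{1/2}=\Gamma(1-\tfrac12)^{-1}=\Gamma(\tfrac12)^{-1}=\pi^{-1/2}$ — wait, I must be careful with the convention: \cite{LRS} sets $\Gamma(\P^n,f)=\prod_{D}\Gamma(\delta_D(f))$ with the Gamma function, and since only the $R$ divisors $D_i$ are non-split (the split divisors contribute $\Gamma(1)=1$), we get $\Gamma(\P^n,f)=\Gamma(\tfrac12)^R=(\sqrt\pi)^R$. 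I would double-check the exact normalisation in \cite[Conjecture 3.8]{LRS} to make sure the $\sqrt\pi^R$ lands in the numerator versus denominator consistently with \eqref{eq:O(n+1) counting result}.

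The main obstacle I anticipate is bookkeeping the precise definitions and normalisations from \cite{LRS}: the quantities $\delta_D$, $\eta(D)$ and $\Gamma(\P^n,f)$ are defined there with specific conventions (e.g.\ whether $\eta$ is a ratio of anticanonical degrees or involves the residue field, whether $\Gamma$ uses $\Gamma(\delta)$ or $\Gamma(1-\delta)$), and the whole point of Proposition~\ref{prop:comparing constants} is that these must be matched against the explicit constant in Theorem~\ref{thm:mainthrm}. Computing $\delta_{D_i}=1/2$ is genuinely easy given that $D$ is a non-square; the real work is ensuring the four values feed correctly into $c_{f,\text{pred}}$ so that, combined with the Brauer group and Tamagawa factors computed in the subsequent lemmas, the product reproduces $\tfrac{1}{2\zeta(n+1)}\bigl(\tfrac{2\sqrt{n+1}}{\sqrt{\pi d}}\bigr)^R\sum_{\b s}\gamma_\infty(\b s)\gamma(\b s)$. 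For the proof of this lemma alone, though, it suffices to unwind each definition; I would present each of the four computations as a short separate paragraph.
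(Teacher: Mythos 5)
Your proposal is correct and follows essentially the same route as the paper: unwind the definitions from \cite{LRS} and observe that $\delta_{D_i}=\tfrac12$ because the fibre over the generic point of $D_i$ has two geometric components swapped by the relevant order-two Galois group, with the other three quantities then being direct consequences of the degrees $n+1$, $d$ and the value $\Gamma(\tfrac12)=\sqrt\pi$. The one point you use implicitly but should state is that each $D_i=\{f_i=0\}$ is geometrically irreducible, so that each $D_i$ contributes exactly one codimension-one point to the sum defining $\Delta(f)$ and the residue field $\kappa(D_i)$ is a bona fide field not containing $\sqrt D$; the paper derives this from Definition~\ref{def:birch} via \cite[\S 1.2]{SofosVisse}.
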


\begin{proof}
    As the $f_i$ satisfy Definition~\ref{def:birch}, the divisors $D_i$ are geometrically irreducible as proven in \cite[\textsection 1.2]{SofosVisse}. The fibre $\{x^2-Dy^2=0, f_i(\b t)=0\}$ over $D_i$ consists of two geometrical components interchanged by the non-trivial element of the relevant Galois group, hence $\delta_{D_i}(f)=\frac12$. We conclude
    \[
    \Delta(f) = \sum_{D} \left( 1- \delta_D(f)\right) = \frac R2.
    \]
    As the anticanonical divisor of the base $\P^n$ is of degree $n+1$ and $\deg f_i=d$ we find $\eta(D_i)=\frac{n+1}d$ for all $i$ by definition \cite[(3.3)]{LRS}.

    For $\Gamma(\P^n,f)$ the definition in 
    \cite[Conjecture 3.8]{LRS} shows that it equals
    $\Gamma(\tfrac12)^R={\sqrt \pi}^R$.    
\end{proof}

As a corollary we obtain that the product of Fujita invariants equals
\[
\prod_{D \in (\P^n)^{(1)}} \eta(D)^{1-\delta_D(f)} =\sqrt{\frac{n+1}d}^R. 
\]

\subsection{The subordinate Brauer group}

We will see that the reciprocity condition involving the quadratic symbol $(\frac{D}{\cdot})$
in Lemma~\ref{lem:no more tears}
comes from the following ramified elements of the Brauer group. 
This distinction is reminiscent of the case of Ch\^atelet type surfaces $x^2-Dy^2=f(x)$ with $f$ of even degree (see \cite{CTS,CTSSD} and \cite[Chapter 7]{Sko}). If $f$ is irreducible or the product of irreducible factors of odd degree, then the unramified Brauer group  is constant, and the Hasse principle and weak approximation are conjectured to hold, and known to hold when $\deg(f)=4$. If however $f$ factors into two irreducible factors of even degree, then the unramified Brauer group is isomorphic to $\Z /2\Z$ and one obtains a Brauer--Manin obstruction and counter-examples to weak approximation and to the Hasse principle.

\begin{proposition}\label{prop:subordinate brauer group}
    The subordinate Brauer group modulo constants is generated by $(D,g(\b t)) \in \Br \kappa(\P^n)$ for $g \in \mathcal G$ where
    \[
    \mathcal G =
    \begin{cases}
        \{f_1,\ldots, f_R\}, & \text{ if $\deg f_i$ is even for all $i$};\\
        \{f_1f_2,\ldots, f_1f_R\}, & \text{ if $\deg f_i$ is odd for all $i$}.
    \end{cases}
    \]
    Hence $\#\left[ \Brsub(\P^n,f)/\Br \Q\right]$ equals $2^R$ in the even degree case, and $2^{R-1}$ in the odd degree case.
\end{proposition}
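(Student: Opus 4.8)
The plan is to identify $\Brsub(\P^n,f)$ explicitly inside $\Br\kappa(\P^n)$ through residues at codimension-one points and then compute the image of the residue map. Recall from~\cite{MR3852186} that $\alpha\in\Br\kappa(\P^n)$ lies in $\Brsub(\P^n,f)$ precisely when, for every $v\in(\P^n)^{(1)}$, the residue $\partial_v(\alpha)\in H^1(\kappa(v),\Q/\Z)$ vanishes if the fibre $X_v$ is split and otherwise factors through the extension of $\kappa(v)$ over which the components of $X_v$ become irreducible. Here $f$ is smooth over $U$, so the only codimension-one points with non-split fibre are the generic points of $D_i=\{f_i=0\}$, $1\le i\le R$; each $D_i$ is geometrically irreducible by~\cite[\textsection 1.2]{SofosVisse}, and $f_i$ is square-free (which follows from Definition~\ref{def:birch}, since otherwise the $i$-th row of the Jacobian there would vanish identically on a hypersurface). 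As in the proof of Lemma~\ref{lem:numerical invariants}, the fibre over the generic point of $D_i$ is $\{x^2-Dy^2=0\}\subseteq\P^2$, two lines interchanged by the nontrivial element of $\Gal(\kappa(D_i)(\sqrt D)/\kappa(D_i))$. Geometric irreducibility of $D_i$ forces $\overline\Q\cap\kappa(D_i)=\Q$, so $D$ is a non-square in $\kappa(D_i)$ and the splitting extension is the nontrivial quadratic one, with class $\chi_D|_{\kappa(D_i)}$, where $\chi_D\in H^1(\Q,\Z/2\Z)$ cuts out $\Q(\sqrt D)$. Thus $\alpha\in\Brsub(\P^n,f)$ if and only if $\partial_v(\alpha)=0$ for all $v\notin\{D_1,\dots,D_R\}$ and $\partial_{D_i}(\alpha)\in\{0,\chi_D|_{\kappa(D_i)}\}$ for every $i$.

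By Grothendieck purity and $\Br\P^n=\Br\Q$ (as $\P^n(\Q)\ne\emptyset$), the residue maps give an injection
\[
\Brsub(\P^n,f)/\Br\Q\;\hookrightarrow\;\bigoplus_{i=1}^{R}\langle\chi_D|_{\kappa(D_i)}\rangle\;\cong\;(\Z/2\Z)^{R},\qquad \alpha\longmapsto\bigl(\partial_{D_i}\alpha\bigr)_{i=1}^{R},
\]
so it suffices to compute the image. For a product $g$ of some of the $f_i$ of even degree $e$, set $(D,g):=(D,g/\ell^{e})\in\Br\kappa(\P^n)$ for a linear form $\ell$ vanishing on no $D_i$; this is independent of $\ell$ modulo $\Br\Q$ since $(D,\ell)^{e}$ splits, and the standard residue formula for quaternion algebras gives $\partial_{D_i}(D,g)=v_{D_i}(g)\cdot\chi_D|_{\kappa(D_i)}$ with $\partial_v(D,g)=0$ for $v\notin\{D_1,\dots,D_R\}$ (the only other potential ramification is along $\{\ell=0\}$, where the order is the even number $-e$). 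Hence $(D,g)\in\Brsub(\P^n,f)$, with image in $(\Z/2\Z)^{R}$ recording which $f_i$ divide $g$ to odd order. In the even-degree case $(D,f_i)$ has image the $i$-th basis vector, and these span $(\Z/2\Z)^{R}$, so $\Brsub(\P^n,f)/\Br\Q\cong(\Z/2\Z)^{R}$ has order $2^{R}$ and is generated by $(D,f_1),\dots,(D,f_R)$. In the odd-degree case the hypothesis $dR\equiv0\pmod{2}$ forces $R$ even; the class $(D,f_1f_j)$ ($2\le j\le R$) has image $e_1+e_j$, and these $R-1$ classes span the hyperplane $H=\{(\epsilon_i):\textstyle\sum_i\epsilon_i\text{ even}\}$, while $(D,f_1\cdots f_R)=\prod_{j=2}^{R}(D,f_1f_j)$ lies in their span because $f_1^{R-1}$ is congruent to $f_1$ modulo squares.

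It remains, in the odd-degree case, to check that the image is contained in $H$. Given $\alpha\in\Brsub(\P^n,f)$ with $\partial_{D_i}(\alpha)=\epsilon_i\,\chi_D|_{\kappa(D_i)}$, restrict $\alpha$ to a general $\Q$-rational line $L\cong\P^1\subseteq\P^n$, chosen to meet each $D_i$ transversally in a reduced $0$-cycle of degree $d$ (Bézout, using that the $D_i$ are reduced and a general line avoids their singular loci and pairwise intersections). By functoriality of residues, $\alpha|_L$ has residue $\epsilon_i\,\chi_D|_{\kappa(P)}$ at each $P\in L\cap D_i$ and is unramified at every other point of $L$; Faddeev reciprocity on $\P^1_\Q$ gives $\sum_{P}\operatorname{cor}_{\kappa(P)/\Q}\bigl(\partial_P(\alpha|_L)\bigr)=0$ in $H^1(\Q,\Q/\Z)$. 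Since $\chi_D|_{\kappa(P)}=\operatorname{res}_{\kappa(P)/\Q}(\chi_D)$, the composite $\operatorname{cor}_{\kappa(P)/\Q}\circ\operatorname{res}_{\kappa(P)/\Q}$ is multiplication by $[\kappa(P):\Q]$, and $\sum_{P\in L\cap D_i}[\kappa(P):\Q]=d$, this identity becomes $\bigl(\sum_i\epsilon_i\bigr)\,d\cdot\chi_D=0$; as $\chi_D\ne0$ and $d$ is odd, $\sum_i\epsilon_i$ is even. Therefore the image equals $H$, so $\#[\Brsub(\P^n,f)/\Br\Q]=2^{R-1}$, generated by $(D,f_1f_2),\dots,(D,f_1f_R)$.

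The main obstacle is this reciprocity step in the odd-degree case: one must arrange that the generic $\Q$-line meets each $D_i$ in a reduced cycle (so that the local residues of $\alpha|_L$ are genuinely the multiplicity-one restrictions of $\chi_D$), carry out the corestriction-and-degree bookkeeping, and invoke $\chi_D\ne0$ — all of which ultimately rest on $D$ being a non-square and on the genericity hypothesis on the $f_i$ from Definition~\ref{def:birch} via~\cite{SofosVisse}. The remaining ingredients (Grothendieck purity, $\Br\P^n=\Br\Q$, the quaternion-algebra residue formula, and Faddeev reciprocity on $\P^1$) are standard.
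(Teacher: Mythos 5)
Your proposal is correct and follows essentially the same strategy as the paper: characterise the subordinate Brauer group by its residues along $D_1,\dots,D_R$ (with residue $1$ or $D$), bound its order by $2^R$ via the uniqueness of a class with prescribed residues, exhibit the generators $(D,\prod_{i\in I}f_i)$, and in the odd-degree case rule out the extra combinations via Faddeev reciprocity applied to a line $L\subseteq\P^n$ together with a corestriction--degree computation. The only cosmetic differences are that the paper reduces to ruling out a single element unramified away from $D_1$ and allows non-transversal intersections of $L$ with $D_1$ (tracking multiplicities via \cite[Theorem~3.7.5]{brauerbook}), whereas you work with an arbitrary $\alpha\in\Brsub(\P^n,f)$ and arrange the line to be transversal and reduced; both choices lead to the identity $D^d=1$ (equivalently $(\sum_i\epsilon_i)\,d\,\chi_D=0$) and the parity conclusion.
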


\begin{proof}
    By definition of the subordinate Brauer group we are looking at the elements $\mathcal A \in \Br \kappa(\P^n)$ which are unramified away from the geometrically irreducible divisors $D_i := \{f_i =0\}$, and whose residue along each $D_i$ is either $1,D \in \kappa(D_i)^\times/\kappa(D_i)^{\times,2}$. By Theorem~3.6.1(ii) in \cite{brauerbook} the class of the element $\mathcal A$ modulo constants is uniquely determined by these constraints if it exists, this proves that there are at most $2^R$ classes of the subordinate Brauer group modulo constants.

    In the case that $d$ is even we see that the elements $(D,\prod_{i \in I} f_i)$ for $I\subseteq \{1,\dots,R\}$ are the $2^R$ elements we were looking for.

    In the case that $d$ is odd we have the $2^{R-1}$ elements $(D,\prod_{i \in I} f_i)$ with $\# I$ even. To show that other combinations of residues do not occur, it suffices to show that there is no $\mathcal A \in \Br \kappa(\P^n)$ which is unramified away from $D_1$ and has residue $D$ along $D_1$. Let us consider a line $L \subseteq \P^n$, which is not contained in $D_1$. We will study $\left.\mathcal A\right|_L \in \Br\kappa(L)$ and proceed by contradiction.

    As $\mathcal A$ is unramified away from $D_1$, the restriction $\left.\mathcal A\right|_L$ is unramified away from $D_1 \cap L$. This intersection is supported on finitely many closed points $P_i \in L$ of degree $d_i$ and multiplicity $m_i$. As $D_1$ has degree $d$ we find $\sum_i m_id_i=d$. The residue at $P_i$ equals $D^{m_i} \in \kappa(P_i)^\times/\kappa(P_i)^{\times,2}$ by \cite[Theorem 3.7.5]{brauerbook} which applies as $D_1$ is smooth. By Faddeev's exact sequence \cite[Theorem 1.5.2]{brauerbook} we find that, multiplicatively written and modulo squares, $1 = \prod_i \mbox{cores}_{\kappa(P_i)/\mathbb{Q}}(D^{m_i}) = \prod_i D^{d_im_i} = D^d$. This is a contradiction as $d$ is odd, and $D$ is not a square.
\end{proof}

One can also prove that the vertical Brauer group of $f$ is constant, and the natural inclusion $\Brvert(f) \subseteq f^* \Brsub(\P^n,f)$ is not an equality. This shows that there is no smooth proper model of $X$ over $\P^n$ which is flat over the base, as in \cite[Lemma 2.10]{LRS}.

\subsection{The Tamagawa number}

We   compute the Tamagawa measure $\tau_f$ of $\big[\prod_v fX(\Q_v)\big]^{\Brsub}$. To do so, recall that the subordinate Tamagawa number     in \cite[(3.10)]{LRS} as the limit 
\[
\tau_f\left(\big[\prod_v fX(\Q_v)\big]^{\Brsub}\right) := \lim_{\stackrel{T \to \infty}{S=\{p\leq T\} \cup \{\infty\}}} \l[ \prod_{v \in S} \lambda^{-1}_v \tau_v \r] \l(\l[ \prod_{v \in S} f V(\Q_v)\r]^{\Brsub} \r),
\]
since $\lim_{s\to 1} \l[(s-1)\zeta(s)\r]^{1-\frac R2} = 1$. Some remarks are in order. In \cite{LRS} this limit is written as a limit over the set $S$, but it is crucial that $S$
has the shape 
$\{p\leq T\}$ with $T$ increasing,
since the infinite product is only conditionally convergent in many examples, as it is in this paper.  
Secondly, the convergence of this infinite product in our case is not immediately clear. Rather than relying on 
\cite[Lemma 3.13]{LRS}, we establish convergence using 
the circle method estimates in Lemma~\ref{lem:ozzy}.

\begin{lemma}\label{lem:Tamagawa part for finite S}
    Let $S$ be a finite set of  
    places which contains $\infty$, $2$ and all primes dividing $D$. Let $q$ be an integer divisible by all primes in $S$.
    
    We have 
    \begin{align*}
     & \#\left[ \Brsub(\P^n,f)/\Br \Q\right]\cdot \l[ \prod_{v \in S} \lambda^{-1}_v \tau_v \r] \l(\l[ \prod_{v \in S} f V(\Q_v)\r]^{\Brsub} \r)  = &\\
     & \qquad \qquad \qquad \qquad \l(\frac{n+1}2 2^R \prod_{p \in S \setminus \infty}  ( 1- p^{-n-1} ) \r) \sum_{\substack{s_1,\ldots, s_R \in \{-1,1\}\\ (D,s_1\ldots s_R)_\R=1}} \gamma_\infty(\b s) t_S(\b s), 
    \end{align*}
    where $t_S(\b s) =  \left[ \prod_{p \in S}  \left( 1- \frac1p\right)^{-\frac R2} \mu_p \right]  \l \{\b t \in  \Z_q^{n+1} \colon \ \begin{array}{l} (D,  f_1(\b t ) \cdots f_R(\b t ) )_{\Q_p}=1 \ \forall p \in S\setminus \infty,  \\ \left(\frac D{s_if_i(\b t)/\prod_{p \in S\setminus \infty} p^{v_p(f_i(\b t))}}\right) = 1 \  \forall i\end{array}\right\} $.   
\end{lemma}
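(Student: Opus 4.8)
The plan is to unwind the definitions of the local Tamagawa measures $\tau_v$ and the convergence factors $\lambda_v$ on the subordinate adelic set, and then to match the resulting local integrals with the quantities $\gamma_\infty(\b s)$ and $t_S(\b s)$ appearing on the right-hand side. First I would recall from \cite[\textsection 3]{LRS} that for the anticanonical height on $\P^n$ one has $\lambda_p = (1-p^{-1})^{-1}$ (or more precisely the local factor of $\zeta(n+1)$, namely $(1-p^{-n-1})^{-1}$, up to the normalisation used there) and $\lambda_\infty = 1$, and that $\tau_p$ is the measure on $\P^n(\Q_p)$ induced by the anticanonical sheaf, which on the standard affine chart is $\mu_p$ up to the factor $(1-p^{-n-1})$ coming from the count of $\F_p$-points of $\P^n$. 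This explains the prefactor $\prod_{p\in S\setminus\infty}(1-p^{-n-1})$ and the $\frac{n+1}{2}$ (the latter being $\alpha^*(\P^n)$ combined with the factor $\tfrac12$ from $\P^n(\Z)$ versus $(\Z^{n+1}\setminus 0)/\{\pm1\}$, as in \S\ref{s:preparegrounds}).

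Next I would decompose the subordinate adelic set $\big[\prod_{v\in S}fV(\Q_v)\big]^{\Brsub}$. The key point is that a tuple of local points $(\b t_v)_{v\in S}$ lies in this set precisely when, for every Brauer class $\mathcal A$ in the subordinate Brauer group described in Proposition~\ref{prop:subordinate brauer group}, the sum $\sum_{v\in S}\inv_v \mathcal A(\b t_v)$ vanishes; since the generators are $(D, g(\b t))$ with $g \in \mathcal G$, and for a conic $x^2-Dy^2=z^2\prod f_i(\b t)$ the fibre over $\b t$ is soluble over $\Q_v$ iff $(D,\prod_i f_i(\b t))_{\Q_v}=1$, these conditions translate (using Lemma~\ref{lem:hilbser} and Remark~\ref{remIHP}) into exactly the Hilbert-symbol constraints $(D, f_1(\b t)\cdots f_R(\b t))_{\Q_p}=1$ for $p\in S\setminus\infty$ together with the sign condition $(D,s_1\cdots s_R)_\R=1$ at the archimedean place and the Kronecker-symbol reciprocity $\big(\tfrac{D}{s_i f_i(\b t)/\prod_{p\in S\setminus\infty}p^{v_p(f_i(\b t))}}\big)=1$ that encodes the residues of the ramified classes. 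Here I would insert the standard product-over-$\{\pm1\}^R$ bookkeeping: the archimedean local density factors as $\sum_{\b s}\gamma_\infty(\b s)$ over the sign patterns compatible with $(D,s_1\cdots s_R)_\R=1$, and for each fixed $\b s$ the remaining finite places contribute $t_S(\b s)$ after inserting the convergence factors $(1-p^{-1})^{-R/2}$ — these are the $\lambda_v$-type factors that \cite[(3.10)]{LRS} attaches so that the product over all $v$ converges, matching the $\prod_{p\leq T}(1-1/p)^{-R/2}$ already present in the definition of $\gamma(\b s)$ in Theorem~\ref{thm:mainthrm}.

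The last step is to absorb the factor $\#[\Brsub(\P^n,f)/\Br\Q]$, which by Proposition~\ref{prop:subordinate brauer group} is $2^R$ in the even-degree case and $2^{R-1}$ in the odd-degree case, and to check it is consistent with the $2^R$ appearing on the right-hand side — the discrepancy in the odd-degree case being exactly compensated by the fact that the subordinate set then imposes one fewer independent reciprocity constraint, so that one of the $2^R$ sign-and-Kronecker patterns is redundant; I would phrase this uniformly by noting that $2^R$ in the displayed formula is the cardinality of $\{\pm1\}^R$ (over which $\b s$ ranges), not of the Brauer group, so no case distinction is needed in the final identity. The main obstacle I anticipate is purely bookkeeping: tracking the precise normalisation constants of Peyre-type Tamagawa measures on $\P^n$ versus the affine-cone integrals $\mu_p$ over $\Z_q^{n+1}$, and making sure the powers of $(1-p^{-1})$, $(1-p^{-n-1})$ and the factor $\tfrac12$ from $\P^n(\Z)$ all land correctly; the geometric input (splitness of fibres, identification of the subordinate Brauer group, residue computations) is already supplied by Lemma~\ref{lem:numerical invariants}, Proposition~\ref{prop:subordinate brauer group} and Lemmas~\ref{lem:hilbser}--\ref{lem:detectorslemma2}.
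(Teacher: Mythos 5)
Your high-level plan (unwind the Tamagawa measure via \cite[Proposition 4.1]{LRS}, express the Brauer--Manin pairing through Hilbert symbols, convert to Kronecker symbols, and split over sign patterns) is the same as the paper's, but there are two concrete gaps.

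First, the conversion from the Brauer--Manin conditions $\prod_{v\in S}(D,g_i(\b t_v))_{\Q_v}=1$ to the Kronecker-symbol condition $\left(\tfrac{D}{s_i'u_i}\right)=1$ is not a direct consequence of Lemma~\ref{lem:hilbser} and Remark~\ref{remIHP}, which only describe local symbols. The paper's proof decomposes $g_i(\b t)=u_iP_i$ with $u_i\in\Z_q^\times$ and $P_i$ a product of primes in $S$, applies Hilbert reciprocity $\prod_v(D,\cdot)_{\Q_v}=1$ to move the product of local symbols over $S$ to a product over $v\notin S$, and uses that for $p\notin S$ one has $(D,u_i)_{\Q_p}=\left(\tfrac{D}{p}\right)^{v_p(u_i)}$; that global reciprocity step is the engine of the computation and is absent from your outline.

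Second, your treatment of the odd-degree case is not sound. You claim the identity can be phrased ``uniformly'' because ``$2^R$ is the cardinality of $\{\pm1\}^R$'', but in the paper the $2^R$ on the right-hand side is $\#[\Brsub(\P^n,f)/\Br\Q]$ in the even case, while in the odd case it is $2\cdot\#[\Brsub]$, and the extra factor of $2$ has to be produced. The paper produces it by noting that in the odd case the Brauer--Manin condition only forces $\left(\tfrac D{s_if_i(\b t)/\prod p^{v_p(f_i(\b t))}}\right)$ to be the same for all $i$, not to equal $1$; it then exhibits a measure-preserving bijection (multiplication by $\lambda\in(\Z/D\Z)^\times$ with $\left(\tfrac D\lambda\right)=-1$, using that $d$ is odd) between the $\psi=+1$ and $\psi=-1$ sublevel sets, whence the local measure equals $2\gamma_\infty(\b s)t_S(\b s)$. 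Your phrase ``one of the $2^R$ sign-and-Kronecker patterns is redundant'' does not describe this mechanism and would not by itself yield the required factor; without the explicit bijection the odd case remains unproved. (A smaller issue: you attribute the $\tfrac{n+1}{2}$ to $\alpha^*(\P^n)$, but $\alpha^*(\P^n)=\tfrac1{n+1}$ is a separate multiplicand in $c_{f,\mathrm{pred}}$; the $\tfrac{n+1}{2}$ here comes from the normalization in \cite[Proposition 4.1]{LRS} relating $\tau_v$ on $\P^n(\Q_v)$ to the Lebesgue/Haar measure $\mu_v$ on the affine cone.)
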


\begin{proof} We recall from Proposition~\ref{prop:subordinate brauer group} that the subordinate Brauer group is generated by $(D,g_i)$ for the elements
    \[
    \begin{cases}
    g_i = f_i,\quad 1 \leq i \leq R & \text{ if } 
    d  
    \text{ is even},\\
    g_i = f_1f_i,\quad 2 \leq i \leq R & \text{ if } 
    d 
    \text{ is odd}.
    \end{cases}
    \] 
    With $g_i$ as in Proposition~\ref{prop:subordinate brauer group}
    we have by definition 
        \begin{align*}
    \big[\prod_{v \in S} f V(\Q_v)\big]^{\Brsub} & = \l \{(\b t_v)_v \in  \prod_{v\in S} \P^n(\Z_v) \colon \quad \begin{array}{l}
(D,  f_1(\b t_v ) \cdots f_R(\b t_v ) )_{\Q_v}=1 \ \forall v \in S,  \\
\prod_{v\in S} (D,g_i(\b t_v)_{\Q_v} = 1 \ \forall i
\end{array}\right \}. 
\end{align*}
Recall that the Tamagawa measure $\prod_v \lambda_v^{-1} \tau_v$ is constructed, see \cite[\textsection 3.6]{LRS}, using the convergence factors $\lambda_p = \left(1-\frac 1p\right)^{\frac R2 + 1}$. By \cite[Proposition 4.1]{LRS} we can compute the Tamagawa measure of this set using the measure
\begin{align*}
& \frac{n+1}{2} \mu_\infty \prod_{p \in S \setminus \infty} \left[ (1+p^{-1} + \cdots + p^{-n}) \left(1-\frac 1p\right)^{-\frac R2 - 1} \mu_p\right] = &\\
& \qquad \qquad \qquad \qquad \qquad  \left(\frac{n+1}2 \prod_{p \in S \setminus \infty}  ( 1- p^{-n-1} ) \right) \cdot \mu_\infty \times \prod_{p \in S \setminus \infty} \left[\left(1-\frac 1p\right)^{-\frac R2} \mu_p\right]
\end{align*}
on the corresponding affine cone, where the 
$\mu_p$ and
$\mu_\infty$ 
are the appropriate Haar measures on $\mathbb Z^{n+1}_p$ and $\R^{n+1}$. We conclude that up to the factor converging absolutely to $\frac{n+1}{2\zeta(n+1)}$, we can compute the Tamagawa measure as
\begin{equation}\label{eq:local tamagawa over S}
\left(\mu_\infty \times \prod_{p \in S\setminus \infty} \left(1-\frac1p \right)^{-\frac R2} \mu_p\right)  \l \{(\b t_v)_v \in  \R^{n+1} \times \prod_{p \in S\setminus \infty}\Z^{n+1}_p \colon \begin{array}{l}
(D,  f_1(\b t_v ) \cdots f_R(\b t_v ) )_{\Q_v}=1 \ \forall v \in S,  \\
\prod_{v\in S} (D,g_i(\b t_v)_{\Q_v} = 1 \ \forall i
\end{array}\right \}. 
\end{equation} 
We will rewrite the set in \eqref{eq:local tamagawa over S} using $\prod_{p\in S\setminus \infty} \Z_p = \Z_q$ as $q$ is an integer divisible by all finite places in $S$. We find after taking out the archimedean place and partitioning by the signs $s_i$ of the $f_i$ a disjoint union
\[
\l \{(\b t_v)_v \in  \R^{n+1} \times \prod_{p \in S\setminus \infty}\Z^{n+1}_p \colon \ \begin{array}{l}
(D,  f_1(\b t_v ) \cdots f_R(\b t_v ) )_{\Q_v}=1 \ \forall v \in S,  \\
\prod_{v\in S} (D,g_i(\b t_v)_{\Q_v} = 1 \ \forall i
\end{array}\right \}=
\]
\begin{align*}
& \coprod_{s_i \in \{-1,1\}} \Bigg[ \l\{\b t_\infty \in \R^{n+1} \colon \  \begin{array}{l}
(D,  f_1(\b t_\infty ) \cdots f_R(\b t_\infty ) )_{\R}=1,  \\
s_i f_i(\b t_\infty)>0\ \forall i
\end{array}
\r\} \\
    & \qquad \qquad \qquad \qquad \qquad \times \l \{(\b t_p)_p \in  \Z_q^{n+1} \colon \  \begin{array}{l}
(D,  f_1(\b t_p ) \cdots f_R(\b t_p ) )_{\Q_p}=1 \ \forall p \in S\setminus \infty,  \\
\prod_{v\in S} (D,s'_i g_i(\b t_v)_{\Q_v} = 1 \ \forall i
\end{array}\right\} \Bigg] =\\
& \coprod_{\substack{s_i \in \{-1,1\}\\ (D,\prod s_i)=1}} 
    \Bigg[ \l\{\b t_\infty \in \R^{n+1} \colon s_i f_i(\b t_\infty)>0 \ \forall i
\r\}\\
& \qquad \qquad \qquad \qquad \qquad \times \l \{(\b t_p)_p \in  \Z^{n+1}_q \colon \ \begin{array}{l}
(D,  f_1(\b t_p ) \cdots f_R(\b t_p ) )_{\Q_p}=1 \ \forall p \in S\setminus \infty,  \\
(D,s'_i)_\R \prod_{p\in S\setminus \infty} (D,g_i(\b t_p))_{\Q_p} = 1 \ \forall i
\end{array}\right\} \Bigg],
    \end{align*}
where $s'_i=s_i$ if the $d$ is even, and $s'_i = s_1 s'_i$ if $d$ is odd. We will now rewrite the last conditions in terms of Kronecker symbols. To that end introduce $g_i(\b t) = u_i P_i$ with $u_i \in \Z_q^\times$ and $P_i=\prod_{p\in S\setminus \infty} p^{e_p}$, then
\[
1 = (D,s'_i)_\R \prod_{p\in S\setminus \infty} (D,g_i(\b t))_{\Q_p} = \left( \frac D{s'_i}\right) \prod_{p\in S\setminus \infty} (D,u_i)_{\Q_p}(D,P_i)_{\Q_p}.
\]
Note that $\prod_{p \in S \setminus \infty} (D,P_i)_{\Q_p} =(D,P_i)_\R \prod_{p\notin S} (D,P_i)_{\Q_p} = 1$ as $P_i > 0$ and all primes dividing $D$ or~$P_i$ lie in $S$. Let us approximate $u_i$ by an integer, so we can write
\begin{align*}
1 & = \left( \frac D{s'_i}\right) \prod_{p\in S\setminus \infty} (D,u_i)_{\Q_p} = \left( \frac D{s'_i}\right) (D,u_i)_\R \prod_{p \notin S} (D,u_i)_{\Q_p}\\
& = \left( \frac D{s'_i}\right) (D,u_i)_\R \prod_{p \notin S} \left( \frac Dp \right)^{v_p(u_i)} = \left( \frac D{s'_i}\right)\left(\frac D{u_i}\right) = \left(\frac D{s'_iu_i}\right).
\end{align*}
Hence we find for a fixed vector $\b s$ of signs
\[
\l\{\b t_\infty \in \R^{n+1} \colon s_i f_i(\b t_\infty)>0 \ \forall i
\r\} \times \l \{(\b t_p)_p \in  \Z^{n+1}_q \colon \ \begin{array}{l}
(D,  f_1(\b t_p ) \cdots f_R(\b t_p ) )_{\Q_p}=1 \ \forall p \in S\setminus \infty,  \\
\left( \frac{D}{s'_ig_i(\b t)/\prod_{p \in S \setminus \infty} p^{v_p(g_i(\b t))}} \right) = 1 \ \forall i
\end{array}\right\}.
\]
If $d$ is even, and hence $g_i=f_i$ and $s'_i = s_i$ we recover that the measure equals $\gamma_\infty(\b s)t_S(\b s)$, which proves the statement in this case as the order of the subordinate Brauer group modulo constants is $2^R$.

If however, $d$ is odd, we find
\begin{align*}
& \l\{\b t_\infty \in \R^{n+1} \colon s_i f_i(\b t_\infty)>0 \ \forall i
\r\}\\
& \quad \quad \quad \quad \quad \times \l \{(\b t_p)_p \in  \Z^{n+1}_q \colon \ \begin{array}{l}
(D,  f_1(\b t_p ) \cdots f_R(\b t_p ) )_{\Q_p}=1 \ \forall p \in S\setminus \infty,  \\
\left( \frac{D}{s_if_i(\b t)/\prod_{p \in S \setminus \infty} p^{v_p(f_i(\b t))}} \right) = \left( \frac{D}{s_1f_1(\b t)/\prod_{p \in S \setminus \infty} p^{v_p(f_1(\b t))}} \right) \ \forall i
\end{array}\right\}\\
= & 
\l\{\b t_\infty \in \R^{n+1} \colon s_i f_i(\b t_\infty)>0 \ \forall i
\r\}\\
& \quad \quad \quad \quad \quad \times \coprod_{\psi \in \{-1,1\}} \l \{(\b t_p)_p \in  \Z^{n+1}_q \colon \ \begin{array}{l}
(D,  f_1(\b t_p ) \cdots f_R(\b t_p ) )_{\Q_p}=1 \ \forall p \in S\setminus \infty,  \\
\left( \frac{D}{s_if_i(\b t)/\prod_{p \in S \setminus \infty} p^{v_p(f_i(\b t))}} \right) = \psi \ \forall i
\end{array}\right\}.
\end{align*}
Let $\lambda \in \left(\Z/D\Z\right)^\times$ for which $\left(\frac{D}\lambda\right)=-1$, which exists as $S$ contains all places dividing $D$. Then multiplication on $\Z_q^{n+1}$ by $\lambda$ gives a measure preserving bijection between the two sets with $\psi=1$ and $\psi=-1$, as $d$ is odd. Hence the measure of this set for a single $\b s$ equals $2 \gamma_\infty(\b s) t_S(\b s)$. We now conclude as the subordinate Brauer group modulo constants in this case has order $2^{R-1}$ .
\end{proof}

Recall the definition of  $\gamma(\b  s)$ in 
Theorem~\ref{thm:mainthrm}. To compare $t_S(\b s)$ to the term of the limit defining 
$\gamma(\b s)$ we shall use Lemma~\ref{lem:mitsotakigamiesai2}.

\begin{proposition}\label{prop:limit of finite part of Tamagaw is kappa}
    Take $q=\prod_{p\leq T} p^T$. Then
    \[
\gamma(\b s) = \lim_{T \to \infty} \left[ \prod_{p \leq T} \left(1-\frac1p\right)^{-\frac R2} \mu_p \right] \l \{\b t \in  \Z^{n+1}_q \colon \ \begin{array}{l} (D,  f_1(\b t ) \cdots f_R(\b t ) )_{\Q_p}=1 \ \forall p \leq T,  \\ \left(\frac D{s_if_i(\b t)/\prod_{p \leq T} p^{v_p(f_i(\b t))}}\right) = 1 \  \forall i\end{array}\right\}.
    \]
\end{proposition}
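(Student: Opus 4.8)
The plan is to pass through the truncated densities $\gamma_T(\b s)$ of Lemma~\ref{lem:no more tears}. Recall that in the proof of Theorem~\ref{thm:mainthrm} the constant $\gamma(\b s)$ is produced via Lemma~\ref{lem:ozzy} as $\lim_{T\to\infty}\gamma_T(\b s)\prod_{p\leq T}(1-1/p)^{-R/2}$, with the explicit rate $\gamma_T(\b s)\prod_{p\leq T}(1-1/p)^{-R/2}=\gamma(\b s)+O(2^{-T/2})$; what we must show is that the right-hand side of the proposition --- which, in the notation of Lemma~\ref{lem:Tamagawa part for finite S}, is precisely $\lim_{T\to\infty}t_S(\b s)$ for $S=\{p\leq T\}\cup\{\infty\}$ --- has the same limit. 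Fix $T$, set $q=\prod_{p\leq T}p^T$, identify $\Z_q=\prod_{p\leq T}\Z_p$, and let $\mu=\prod_{p\leq T}\mu_p$ be the product Haar measure on $\Z_q^{n+1}$. Off the $\mu$-null set where some $f_i(\b t)$ vanishes in some $\Z_p$ (null by Lemma~\ref{lem:mitsotakigamiesai2}), write $\mathcal S_T\subseteq\Z_q^{n+1}$ for the set appearing on the right-hand side of the proposition, so that the right-hand side equals $\lim_{T\to\infty}\big(\prod_{p\leq T}(1-1/p)^{-R/2}\big)\mu(\mathcal S_T)$. Finally introduce the truncation region
\[
\mathcal T_T:=\Big\{\b t\in\Z_q^{n+1}\colon v_p\big(f_1(\b t)\cdots f_R(\b t)\big)<T\ \text{for all }3\leq p\leq T,\text{ and }v_2\big(f_1(\b t)\cdots f_R(\b t)\big)<T-3\Big\}.
\]

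The first step is to prove $\mu(\mathcal S_T\cap\mathcal T_T)=\gamma_T(\b s)$. On $\mathcal T_T$ we have $v_p(f_i(\b t))\leq v_p(f_1(\b t)\cdots f_R(\b t))<T$ for every $i$ and $p\leq T$, so $\prod_{p\leq T}p^{v_p(f_i(\b t))}=\gcd(f_i(\b t),q)$; and, once $T$ exceeds every prime dividing $2D$, the bounds $v_p<T$ and $v_2<T-3$ ensure --- through Remark~\ref{remIHP} for the Hilbert symbols and through the $8D$-periodicity of $(\frac{D}{\cdot})$ for the Kronecker symbols --- that $(D,f_1(\b t)\cdots f_R(\b t))_{\Q_p}$ and $\big(\frac{D}{s_i f_i(\b t)/\gcd(f_i(\b t),q)}\big)$ depend only on the class of $\b t$ in $(\Z/q\Z)^{n+1}$. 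Comparing with the definition of $\gamma_T(\b s)$ in Lemma~\ref{lem:no more tears}, whose defining conditions are exactly the truncation, the Hilbert, and these Kronecker conditions, one sees that $\mathcal S_T\cap\mathcal T_T$ is the preimage in $\Z_q^{n+1}$ of the subset of $(\Z/q\Z)^{n+1}$ counted by $\gamma_T(\b s)$; since $\gamma_T(\b s)$ is $q^{-(n+1)}$ times that cardinality, this gives $\gamma_T(\b s)=\mu(\mathcal S_T\cap\mathcal T_T)$. The second step bounds the leftover: if $\b t\notin\mathcal T_T$ then $\sum_{i=1}^R v_p(f_i(\b t))\geq T-3$ for some $p\leq T$, hence $p^{k}\mid f_i(\b t)$ for some $i$, where $k:=\lceil(T-3)/R\rceil$, so Lemma~\ref{lem:mitsotakigamiesai2} and~\eqref{eq:adriano in siris by pergolesi} give
\[
\mu(\mathcal S_T\setminus\mathcal T_T)\leq\sum_{p\leq T}\sum_{i=1}^R\mu_p\big(\{\b x\in\Z_p^{n+1}\colon p^{k}\mid f_i(\b x)\}\big)\ll_R 2^{-k}\leq 2^{-(T-3)/R}.
\]

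Combining the two steps with Mertens' estimate $\prod_{p\leq T}(1-1/p)^{-R/2}\ll(\log T)^{R/2}$ yields
\[
\Big(\prod_{p\leq T}(1-1/p)^{-R/2}\Big)\mu(\mathcal S_T)=\gamma_T(\b s)\prod_{p\leq T}(1-1/p)^{-R/2}+O_R\big((\log T)^{R/2}\,2^{-(T-3)/R}\big),
\]
and since Lemma~\ref{lem:ozzy} turns the first term on the right into $\gamma(\b s)+O(2^{-T/2})$, letting $T\to\infty$ proves the proposition. The step I expect to demand all the care is the first one: one must verify that the truncation inequalities hard-wired into $\gamma_T(\b s)$ are exactly sharp enough that the valuations of the $f_i(\b t)$ at the primes $p\leq T$ --- and their unit parts to the precision required by Remark~\ref{remIHP} and by the modulus of the Kronecker symbol --- are functions of $\b t$ modulo $q$ alone. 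This is routine but fiddly valuation bookkeeping, of the same flavour as the passage from $\widehat{\gamma}_z(\b s)$ to $\gamma_z(\b s)$ in Lemma~\ref{lem:no more tears} and as the decomposition in Lemma~\ref{lem:decomposeunramified}, and it is precisely where the hypotheses on $T$ get used.
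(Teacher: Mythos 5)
Your proposal is correct and follows essentially the same strategy as the paper's proof: introduce the valuation truncation conditions from Lemma~\ref{lem:no more tears}, show that on the truncated region the measure coincides with $\gamma_T(\b s)$ (because the conditions descend to $(\Z/q\Z)^{n+1}$), bound the complement via Lemma~\ref{lem:mitsotakigamiesai2}, and conclude with Lemma~\ref{lem:ozzy}. The only cosmetic difference is that you invoke~\eqref{eq:adriano in siris by pergolesi} to remove the extra factor of $T$ in the paper's error term $(\log T)^{R/2}T2^{-T/R}$, giving the slightly cleaner $O_R((\log T)^{R/2}2^{-(T-3)/R})$; both tend to $0$, so the conclusion is unaffected.
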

\begin{proof}
The terms of the limit agree with
    \[
    \left[\prod_{p \leq T} \left(1-\frac1p\right)^{-\frac R2} 
    \mu_p \right]\l\{\b t \in  \Z^{n+1}_q \colon \ \begin{array}{l} v_p(\prod_i f_i(\b t)) < T \ \forall 3 \leqslant p \leqslant T\\[2mm]
    v_2(\prod_i f_i(\b t)) < T- 3\\
    (D,  f_1(\b t ) \cdots f_R(\b t ) )_{\Q_p}=1 \ \forall p \in S\setminus \infty,  \\ \left(\frac D{s_if_i(\b t)/\prod_{p \in S\setminus \infty} p^{v_p(f_i(\b t))}}\right) = 1 \  \forall i\end{array}\right\}
    \]
    up to 
    $$\ll \sum_{p\leq T}\prod_{p \leq T} 
    \l(1-\frac1p\r)^{-R/2} 
     p^{-T/R} 
 \ll ( \log T)^{R/2} 
T2^{-T/R} \underset{T \to +\infty}{\longrightarrow} 0 
.$$  
To prove the bound, let 
$a_2=3$ and $a_p=0$ for all other $p$
and  note that if  
$v_p(\prod_i f_i(\b t)) \geq  T-a_p $   for some $p \leq T$ 
then there exists  $p\leq T$
and $1\leq i \leq R $ such that 
$v_p(f_i(\b t )) \geq (T-a_p)/R$.
By Lemma~\ref{lem:mitsotakigamiesai2} this probability is 
$\ll p^{-(T-a_p)/R}\leq 2 p^{-T/R}$.

    The conditions in the latter expression only depend on $\b t \in \Z_q^{n+1}$ modulo $q$, hence we can replace the Haar measure by $q^{-n-1}$ times the counting measure on $(\Z/q\Z)^{n+1}$. This recovers the definition of $\gamma(\b s)$, thereby proving the claim. 
\end{proof}

\subsection{Comparing the leading constant with the prediction}

We are now ready to put all parts of the prediction together.

\begin{proof}[Proof of Proposition~\ref{prop:comparing constants}]
    From Lemma~\ref{lem:numerical invariants}
    and Lemma~\ref{lem:Tamagawa part for finite S} we see that order of growth $B/(\log B)^{\frac R2}$ is as predicted, and that
    \begin{align*}
    c_{f,\text{pred}} = & \frac 1{n+1}\frac1{\sqrt \pi^R} \l(\frac{n+1}d \r)^{\frac R2} \#\left[ \Brsub(\P^n,f)/\Br \Q\right] \cdot \lim_S \tau_f\left( \big[\prod_v f(X(\Q_v))\big]^{\mathcal B, S}\right)\\
            = & \frac 1{n+1} \l(\frac{n+1}{\pi d} \r)^{\frac R2} \frac{n+1}2 \frac{2^R}{\zeta(n+1)} \lim_{\substack{T \to \infty\\ S=\{p\leq T\}}} \left[ \sum_{\substack{s_1,\ldots, s_R \in \{-1,1\}\\ (D,s_1\ldots s_R)=1}} 
            \gamma_\infty(\b s) t_S(\b s) \right]\\
    = & \frac1{2} \l(\frac{n+1}{\pi d} \r)^{\frac R2} \frac{2^R}{\zeta(n+1)} \sum_{\substack{s_1,\ldots, s_R \in \{-1,1\}\\ (D,s_1\ldots s_R)=1}} \gamma_\infty(\b s) \lim_{\substack{T \to \infty\\S=\{p\leq T\}}} t_S(\b s).
    \end{align*} 
We know that  $t_S(\b s)$ converges to $\gamma(\b s)$ if $S$ increases with $T$ by Lemma~\ref{prop:limit of finite part of Tamagaw is kappa}. Hence we arrive at
    \[
    c_{f,\text{pred}} = \l(\frac{n+1}{\pi d} \r)^{\frac R2}\frac{2^R}{2\zeta(n+1)} \sum_{\substack{s_1,\ldots, s_R \in \{-1,1\}\\ (D,s_1\ldots s_R)_\R=1}} \gamma_\infty(\b s) \gamma(s),
    \]
    which agrees with the leading constant in \eqref{eq:O(n+1) counting result}.
\end{proof}

\end{document}